\documentclass[11pt]{amsart}

\usepackage{amssymb}
\usepackage{overpic}
\usepackage{enumitem}   
\usepackage{graphicx} 
\usepackage{multicol}
\usepackage{xcolor}
\usepackage{amsrefs}
\usepackage[a4paper,margin=2cm]{geometry}
\usepackage[hidelinks]{hyperref}

\graphicspath{{Figures/}}

\newtheorem{theorem}{Theorem}
\newtheorem{question}{Question}

\newtheorem{proposition}{Proposition} 
 
\newtheorem{remark}{Remark} 
\newtheorem{lemma}{Lemma} 
\newtheorem{definition}{Definition} 

\setlength{\parskip}{0.2cm} 

\begin{document}
	
\title[On structural stability of Evolutionary Stables Strategies]{On structural stability of Evolutionary Stables Strategies}

\author[Jefferson Bastos, Claudio Buzzi and Paulo Santana]
{Jefferson Bastos$^1$, Claudio Buzzi$^1$ and Paulo Santana$^1$}

\address{$^1$ IBILCE--UNESP, CEP 15054--000, S. J. Rio Preto, S\~ao Paulo, Brazil}
\email{jefferson.bastos@unesp.br; claudio.buzzi@unesp.br; paulo.santana@unesp.br}

\subjclass[2020]{34A34, 34C05, 37C20, 34D30}

\keywords{Structural Stability, Evolutionary Stable Strategies, Generic Vector Fields, Topological Equivalence, Robustness}

\begin{abstract}
	The introduction of concepts of Game Theory and Ordinary Differential Equations into Biology gave birth to the field of Evolutionary Stable Strategies, with applications in Biology, Genetics, Politics, Economics and others. In special, the model composed by two players having two pure strategies each results in a planar polynomial vector field with an invariant octothorpe. In this paper we provide sufficient and necessary conditions for structural stability in this model.
\end{abstract}

\maketitle

\section{Introduction}\label{Sec1}

In 1937 Andronov and Pontrjagin \cite{AndPon1937} coined the notion of a \emph{robust} vector field. Roughly speaking, a vector field $X$ is robust if small perturbations does not change the topological character of its orbits. Their work dealt with planar analytical vector fields defined on the disk $B^2=\{x\in\mathbb{R}^2\colon ||x||\leqslant1\}$ and transversal to its boundary $\partial B^2$. Since Andronov and Pontrjagin omitted some of their proofs, it was up to DeBaggis \cite{Bag1952}, 1952, to provide the omitted proofs under the less restrictive hypothesis of $X$ being of class $C^1$. On January 1959 M. M. Peixoto \cite{Pei1959} provided an equivalent definition of robustness, calling it \emph{structural stability}, and then extend some of the previous results to $B^n=\{x\in\mathbb{R}^n\colon ||x||\leqslant1\}$. On June of 1959 M. M. Peixoto and his wife, M. C. Peixoto \cite{PeiPei1959}, extended the notion of structural stability to $C^1$-vector fields $X$ defined on any two-dimensional manifold $M\subset\mathbb{R}^2$, with boundary and corners, and allowing contact between $X$ and $\partial M$. In 1962 M. M. Peixoto \cite{Pei1962} (from now on referred only as Peixoto) provided a complete characterization of the structurally stable vector fields $X$ of class $C^1$ defined on any two-dimensional closed manifold (i.e. compact and without boundary). Such characterization is known as \emph{Peixoto's Theorem}. In 1973 Peixoto \cite{Pei1973} also derived a relation between structural stability and Graph Theory. Given a closed two-dimensional manifold $M$, let $\mathfrak{X}$ be the family of all $C^1$-vector fields defined over $M$. Let also $\Sigma\subset\mathfrak{X}$ be the set of all the structurally stable vector fields over $M$ and denote $\mathfrak{X}_1=\mathfrak{X}\backslash\Sigma$. Under the supervision of Peixoto, Sotomayor \cite{Soto1974}, 1974, provided in his thesis the complete characterization of all the structurally stable vector fields of $\mathfrak{X}_1$, giving rise to the notion of \emph{structural stability of first order} (or vector fields of \emph{codimension one}). In 1977 Teixeira \cite{Tei1977} extended the work of Sotomayor, allowing $M$ to be a manifold with boundary. From there on, the notion of structural stability flourished and many papers were published, characterizing the structural stability of $C^1$-vector fields on open surfaces \cite{Kot1982}, polynomial vector fields on compact \cites{Soto1985,Cai1979,San1977,Vel} and non-compact \cites{Sha1987,DumSha1990} two dimensional manifolds, $C^1$-vector fields on compact $n$-dimensional manifolds \cite{Mar1961}, gradient flows \cite{Sha1990} and polynomial foliations \cite{JarLliSha2005}. For a historical paper about Peixoto, see \cite{SotGarMel2020}. 

Given $d\geqslant 1$, in this paper we characterize the structurally stable planar polynomial vector fields $X=(P,Q)$ given by,
\begin{equation}\label{29}
	P(x,y)=x(x-1)f(x,y), \quad Q(x,y)=y(y-1)g(x,y),
\end{equation}
with $f$ and $g$ polynomials of degree at most $d$. The motivation for such classification follows from Game Theory. More precisely, in 1973 Smith and Price \cite{SmiPri1973} introduced concepts of Game Theory in to the Biology, coining the notion of \emph{evolutionary stable strategies} (ESS). Roughly speaking, given a game between two or more players (modeling for example a conflict between different species of animals), an ESS is a strategy such that if most of the players follows this strategy, then there is no ``disruptive'' strategy that would give higher advantages for the other players. In other words, if most of the players follows this strategy, then the best strategy for the other players is to also follow it. In 1978 Taylor and Jonker \cite{TayJon1978} introduced concepts of Ordinary Differential Equations in to the field of ESS in such a way that the game is now molded by a system of Ordinary Differential Equations. In particular, if the game has two players, then the models results in the planar polynomial vector field given by \eqref{29}. We now present briefly how the model is constructed. Let $\Gamma_1$ and $\Gamma_2$ be two players and $\{X_1,X_2\}$, $\{Y_1,Y_2\}$ be their \emph{pure strategies}. We denote by $a_{ij}^*\in\mathbb{R}$ the \emph{payoff} of the pure strategy $X_i$ against the pure strategy $Y_j$, and by $b_{ij}^*\in\mathbb{R}$ the payoff of $Y_i$ against $X_j$. Given a probabilistic vector of dimension two
	\[x=(x_1,x_2)\in S^2:=\{(x_1,x_2)\in\mathbb{R}^2\colon x_1\geqslant0,\; x_2\geqslant0,\; x_1+x_2=1\},\]
we associate to it the \emph{mix strategy} $x_1X_1+x_2X_2$. Similarly, given $y\in S^2$, we associate the mix strategy $y_1Y_1+y_2Y_2$. Let
	\[A^*=\left(\begin{array}{cc} a_{11}^* & a_{12}^* \\ a_{21}^* & a_{22}^* \end{array}\right), \quad B^*=\left(\begin{array}{cc} b_{11}^* & b_{12}^* \\ b_{21}^* & b_{22}^* \end{array}\right),\]
be the \emph{payoff matrices}. Given $y\in S^2$ we define
\begin{equation}\label{30}
	\left<e_i,A^*y\right>=a_{i1}^*y_1+a_{i2}^*y_2,
\end{equation}
as the payoff of $X_i$ against $y_1Y_1+ y_2Y_2$ (here, $\left<\cdot,\cdot\right>$ denotes the standard inner product of $\mathbb{R}^2$). Similarly, given $x\in S^2$, the payoff of the pure strategy $Y_i$ against the mix strategy $x_1X_1+x_2X_2$ is given by,
\[\left<e_i,B^*x\right>=b_{i1}^*x_1+b_{i2}^*x_2.\]
Furthermore, the \emph{average payoff} of $x_1X_1+x_2X_2$ against $y_1Y_1+y_2Y_2$ is given by,
\begin{equation}\label{31}
	\left<x,A^*y\right>=a_{11}^*x_1y_1+a_{12}^*x_1y_2+a_{21}^*x_2y_1+a_{22}^*x_2y_2,
\end{equation}
while the average payoff of $y_1Y_1+y_2Y_2$ against $x_1X_1+x_2X_2$ is given by,
\[\left<y,B^*x\right>=b_{11}^*x_1y_1+b_{12}^*x_1y_2+b_{21}^*x_2y_1+b_{22}^*x_2y_2.\]
The dynamic between players $\Gamma_1$ and $\Gamma_2$ is defined by the ordinary system of differential equations,
\begin{equation}\label{32}
	\begin{array}{ll }
		\dot x_1=x_1\bigl(\left<e_1,A^*y\right>-\left<x,A^*y\right>\bigr), & \quad\dot y_1=y_1\bigl(\left<e_1,B^*x\right>-\left<y,B^*x\right>\bigr), \vspace{0.2cm} \\
		\dot x_2=x_2\bigl(\left<e_2,A^*y\right>-\left<x,A^*y\right>\bigr), & \quad\dot y_2=y_2\bigl(\left<e_2,B^*x\right>-\left<y,B^*x\right>\bigr). 
	\end{array}
\end{equation} 
In other words, the rate of change of the population $x_i$ depends on the difference between the payoffs \eqref{30} and \eqref{31} (the bigger the difference, the bigger the superiority of strategy $X_i$), and on the size of the population itself, as a percentage of the total population $x_1+x_2$. Since $x_1+x_2=y_1+y_2=1$, it follows that to describe the dynamic between players $\Gamma_1$ and $\Gamma_2$, it is necessary only two variables, namely $x=x_1$ and $y=y_1$. Therefore, if we replace $x_2=1-x$ and $y_2=1-y$, then we can simplify system \eqref{32} and obtain the planar ordinary system of differential equations,
\begin{equation}\label{33}
	\begin{array}{l}
		\dot x=x(x-1)\bigl(a_{22}^*-a_{12}^*+(a_{12}^*+a_{21}^*-a_{11}^*-a_{22}^*)y\bigr), \vspace{0.2cm} \\
		\dot y=y(y-1)\bigl(b_{22}^*-b_{12}^*+(b_{12}^*+b_{21}^*-b_{11}^*-b_{22}^*)x\bigr). 
	\end{array}
\end{equation}
Applications of such model can be found in a myriad of research areas, such as Parental Investing, Biology, Genetics, Evolution, Ecology, Politics and Economics. See \cites{Hines,SchSig,Bomze,AccMarOvi,CanBer,XiaoYu} and the references therein. The chasing for more realistic models demand the payoff coefficients to depend on the weight given to strategies $X_i$ and $Y_j$, rather than being constant, i.e. $a_{ij}^*=a_{ij}^*(x,y)$ and $b_{ij}^*=b_{ij}^*(x,y)$. Hence, if we assume that each payoff $a_{ij}^*$, $b_{ij}^*$ is a polynomial of degree at most $n$, then system \eqref{33} becomes
	\[\dot x=x(x-1)f(x,y), \quad \dot y=y(y-1)g(x,y),\]
with $f$ and $g$ polynomials of degree at most $n+1$. 
 
\section{Statement and discussion of the main results}\label{Sec2}

As stated in Section~\ref{Sec1}, the structural stability of the $C^1$-vector fields over a closed two-dimensional manifold $M$ was completely described by Peixoto and it is known as \emph{Peixoto's Theorem} \cite{Pei1962}. Moreover, if $M$ can be embedded in $\mathbb{R}^2$, then $M$ is allowed to have borders and corners \cite{PeiPei1959}. In both cases, it is also proved that the family of the structurally stable vector fields is dense in the set of all the $C^1$-vector fields. Similar results also hold for the case in which $M\subset\mathbb{R}^2$ is an open surface (in particular, for the case $M=\mathbb{R}^2$). See \cite{Kot1982}. However, the characterization of structural stability for \emph{polynomial} vector fields is harder to tackle. This is due because to prove that a given object \emph{is not} structurally stable (e.g. non-hyperbolic limit cycle) one need to \emph{construct} some kind of perturbation that affect such object, changing the topological character of the orbits around it (e.g. to split the non-hyperbolic limit cycle in two or more limit cycles). Hence, the more restrict the space is, the more difficult it can get to obtain suitable perturbations. For example, when dealing with $C^1$-vector fields one can use \emph{bump-functions} to construct suitable perturbations that ``break'' a given object. However, when dealing with polynomial vector fields, this tool cannot be used. Therefore, when dealing with a more restrict space of vector fields, one may obtain a more broader class of structurally stable objects. Concerns about this go back to Andronov et al \cite[$\mathsection6.3$]{And1971}. Still, there are approaches to the structural stability of polynomial vector fields. For this, we refer to the works of Sotomayor \cite{Soto1985} and Shafer \cite{Sha1987}. In the former, the author defines the structural stability of a planar polynomial vector field $X$ as the structural stability of its Poincar\'e compactification $p(X)$ (see Section~\ref{sub1}) and endows the space of vector fields with the coefficients topology. In the later, the author approaches $X$ as a vector field defined on the open surface $M=\mathbb{R}^2$, where the space of vector fields can be endowed either with the \emph{Whitney's Topology} or with the coefficients topology. These approaches results in different sets of necessary and sufficient conditions for structural stability. For a more deeper discussion about these differences, we refer to \cite{DumSha1990}. In this paper we use an approach that resembles the approach of Sotomayor. That is, we define the structural stability of $X$ as the structural stability of its Poincar\'e compactification $p(X)$ and we endow the space of vector fields with the coefficients topology. For this paper, we think that this approach is better because the Poincar\'e compactification is a powerful tool to understand the dynamics of $X$ near the infinity and because the coefficients topology is a more practical topology to deal with concrete perturbations in the model. We now state our results concretely. Given $d\in\mathbb{N}$, let $\mathfrak{X}_d$ be the set of planar polynomial vector fields $X=(P,Q)$ given by
\begin{equation}\label{0}
	P(x,y)=x(x-1)f(x,y), \quad Q(x,y)=y(y-1)g(x,y),
\end{equation}
where $f$, $g\colon\mathbb{R}^2\to\mathbb{R}$ are real polynomials of degree at most $d$ given by,
	\[f(x,y)=\sum_{i+j=0}^{d}a_{ij}x^iy^j, \quad g(x,y)=\sum_{i+j=0}^{d}b_{ij}x^i y^j.\]
Given $X\in\mathfrak{X}_d$, let $\varphi\colon\mathfrak{X}_d\to\mathbb{R}^{(d+1)(d+2)}$ be the linear isomorphism that identify $X$ with the coefficients of $f$ and $g$. The \emph{coefficients topology} of $\mathfrak{X}_d$ is the topology given by the metric
\begin{equation}\label{36}
	\rho(X,Y)=||\varphi(X)-\varphi(Y)||,
\end{equation}
where $||\cdot||$ denotes the standard norm of $\mathbb{R}^{(d+1)(d+2)}$. Given $X\in\mathfrak{X}_d$, let $p(X)$ be its compacti\-fication in the Poincar\'e Sphere (see Section~\ref{sub1}). Let also $\mathbb{S}^1$ be the equator of $\mathbb{S}^2$. Given $X$, $Y\in\mathfrak{X}_d$, we say that $X$ and $Y$ are \emph{topologically equivalent} if there exists an homeomorphism $h\colon\mathbb{S}^2\to\mathbb{S}^2$, preserving $\mathbb{S}^1$, which sends orbits of $p(X)$ to orbits of $p(Y)$, either preserving or reversing the direction of all orbits. 

\begin{definition}\label{Def0}
	Given $d\geqslant1$ and $X\in\mathfrak{X}_d$, we say that $X$ is \emph{structurally stable} (or \emph{structurally stable in the sense of Peixoto}) if there exists an neighborhood $N\subset\mathfrak{X}_d$ of $X$ such that $X$ is topologically equivalent to any $Y\in N$.
\end{definition}

Let $\Sigma_d\subset\mathfrak{X}_d$ be the family of the structurally stable vector fields of $\mathfrak{X}_d$. Given $X\in\mathfrak{X}_d$, let $\gamma_1,\gamma_2,\gamma_3,\gamma_4\subset\mathbb{S}^2$ be the respective invariant curves of $p(X)$, given by the compactification of the four invariant straight lines $x=0$, $x=1$, $y=0$, $y=1$. Let also
\begin{equation}\label{37}
	\Lambda=\gamma_1\cup\gamma_2\cup\gamma_3\cup\gamma_4\cup\mathbb{S}^1, 
\end{equation}
where $\mathbb{S}^1$ denotes the infinity of the Poincar\'e sphere. See Figure~\ref{Fig15}. 
\begin{figure}[ht]
	\begin{center}
		\begin{overpic}[height=4cm]{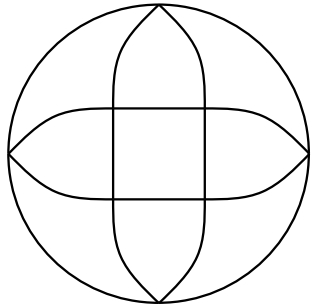} 
		\end{overpic}
	\end{center}
	\caption{An illustration of the set $\Lambda$.}\label{Fig15}
\end{figure}
In our first main result, we provide necessary conditions for structural stability in $\mathfrak{X}_d$.

\begin{theorem}\label{Main1}
	If $X\in\Sigma_d$, then the following statements hold. 
	\begin{enumerate}[label=(\alph*)]
		\item $p(X)$ have a finite number of singularities, each of them being topologically equivalent to a node or a saddle;
		\item $p(X)$ have a finite number of closed orbits, none o them being semi-stable;
		\item If $\gamma$ is a connection between saddles, then $\gamma\subset\Lambda$;
		\item The $\alpha$ and $\omega$-limits of every orbit of $p(X)$ is either a singularity, a closed orbit or a polycycle contained in $\Lambda$.
	\end{enumerate}
\end{theorem}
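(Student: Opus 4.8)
The plan is to prove the contrapositive of each item: if one of the four conditions fails, then $X \notin \Sigma_d$, i.e. there is a sequence $Y_n \to X$ in the coefficients topology with $Y_n$ not topologically equivalent to $X$. The unifying principle is that each ``bad'' configuration (a non-elementary or non-hyperbolic singularity, a semi-stable cycle, a non-invariant saddle connection, a bad $\alpha$- or $\omega$-limit) can be destroyed by an arbitrarily small polynomial perturbation that respects the form \eqref{0}, and destroying it changes the topological type of the phase portrait. The recurring technical point is that perturbations must keep the four lines $x=0$, $x=1$, $y=0$, $y=1$ invariant; this is automatic since any $Y \in \mathfrak{X}_d$ has exactly this form, so the admissible perturbations are precisely arbitrary changes of the coefficients of $f$ and $g$, which is a $(d+1)(d+2)$-dimensional linear space of perturbations. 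One should first record a short lemma: for a singularity $p$ of $p(X)$ lying in the affine chart and off the lines $\Lambda$, the perturbations of $f$ and $g$ already realize, near $p$, an arbitrary small affine perturbation of the vector field (because $x(x-1)$ and $y(y-1)$ are nonzero there), so the classical $C^1$ genericity arguments apply verbatim; for singularities on $\Lambda$ or at infinity one needs the analogous statement in suitable local charts of the Poincar\'e sphere.

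For item (a), suppose $p(X)$ has either infinitely many singularities or a singularity $q$ that is not topologically a node or a saddle. In the first case, note that the singularities in the affine part off $\Lambda$ are the common zeros of $f$ and $g$; if there are infinitely many of them then $f$ and $g$ share a common factor, and a generic small perturbation of the coefficients removes the common factor, changing the number of singularities (hence the topology). If the infinitude is forced along one of the invariant lines, a one-variable argument on that line does the same. In the second case, $q$ is a singularity with at least one eigenvalue zero (a saddle-node, cusp, or worse) or a center/focus-type degeneracy; using the local lemma above we import the standard codimension-$\geqslant 1$ unfolding (Sotomayor \cite{Soto1974}, Teixeira \cite{Tei1977}) to split or annihilate $q$, and since the number or the local index/separatrix structure of singularities is a topological invariant, no equivalence with $X$ survives. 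I would treat the cases $q$ affine-off-$\Lambda$, $q$ on one of the four lines, and $q$ at infinity separately, each time checking that the admissible perturbations surject onto the relevant jet needed for the unfolding.

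For item (b), a semi-stable closed orbit $\Gamma$ of $p(X)$ cannot meet $\mathbb{S}^1$ or the invariant lines $\gamma_i$ (those are invariant, so a closed orbit is either disjoint from $\Lambda$ or equals one of its components, and a semi-stable periodic orbit is not one of the $\gamma_i$); hence $\Gamma$ lies in one of the open rectangles cut out by the four lines, where again admissible perturbations generate arbitrary small affine (indeed polynomial) perturbations near $\Gamma$. The displacement map of $\Gamma$ has an even-order zero, and a generic perturbation changes the number of limit cycles near $\Gamma$, again a topological invariant; infinitely many closed orbits likewise yields a non-isolated cycle or an annulus of periodic orbits, which is destroyed the same way. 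For item (c), if $\gamma$ is a saddle connection with $\gamma \not\subset \Lambda$, then some point of $\gamma$ lies in an open rectangle, and by the local lemma we can perturb the vector field near that point to break the connection (the stable and unstable separatrices move independently under a generic perturbation transverse to $\gamma$); the presence of such a connection is a topological invariant, so $X \notin \Sigma_d$. Finally, for item (d), suppose some orbit has an $\alpha$- or $\omega$-limit set $L$ that is not a singularity, a closed orbit, or a polycycle inside $\Lambda$. By the Poincar\'e--Bendixson theorem on $\mathbb{S}^2$, $L$ must then be a graphic (polycycle) that uses at least one saddle connection outside $\Lambda$ or an orbit through a non-hyperbolic singularity; either way we are back to a configuration ruled out in (a) or (c), so the same perturbations apply. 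I expect the main obstacle to be item (a) at the singularities lying on $\Lambda$ and at infinity: there one must verify carefully, chart by chart in the Poincar\'e compactification, that the constrained family of perturbations (which must preserve the four invariant lines and the structure at infinity) is still rich enough to produce the required versal unfolding of the degenerate singularity, and that no ``hidden'' algebraic obstruction forces the degeneracy to persist; handling the interplay between two or more components of $\Lambda$ crossing at a corner is the most delicate sub-case.
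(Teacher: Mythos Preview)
Your approach has a genuine gap that stems from conflating $\Sigma_d$ (Definition~\ref{Def0}, Peixoto sense) with $\Sigma_d^s$ (Definition~\ref{Def00}, Andronov--Pontrjagin sense). The contrapositive strategy you describe --- break a bad local configuration by a small admissible perturbation and invoke a topological invariant --- is exactly what the paper does for Theorem~\ref{Main2}, where the equivalence homeomorphism is $\varepsilon$-close to the identity. In that setting, if a perturbation $Y$ has (say) two limit cycles in a small annulus where $X$ had one semi-stable cycle, the near-identity homeomorphism must match them there, and you get a contradiction. But for $\Sigma_d$ the homeomorphism is allowed to be arbitrary, so ``number of limit cycles near $\Gamma$'' is not a topological invariant in the relevant sense; only the \emph{global} count is. And because you cannot localize polynomial perturbations with bump functions, the perturbation that splits your semi-stable cycle may simultaneously destroy another limit cycle, merge two singularities, or create a new saddle connection elsewhere, leaving the global invariants you cite unchanged. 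Your sentences ``since the number \ldots\ is a topological invariant, no equivalence with $X$ survives'' and ``again a topological invariant'' are precisely where this breaks: you have not controlled what the perturbation does away from the configuration you are targeting. The paper discusses this obstruction explicitly in the paragraph following Theorem~\ref{Main5}.

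The paper's proof of Theorem~\ref{Main1} circumvents the problem entirely. It first establishes (Theorem~\ref{Main3}) that $\mathcal{P}_d$ is open and dense in $\mathfrak{X}_d$; the perturbation lemmas in Section~\ref{Sec4} are used only to prove density, not to directly contradict stability. Then, given $X\in\Sigma_d$, density guarantees some $Y\in\mathcal{P}_d$ in the stability neighborhood of $X$, so $X$ is topologically equivalent to $Y$. Since $Y$ satisfies the stronger conditions $(a')$, $(b')$, $(c)$, $(d')$, and each of the conclusions $(a)$--$(d)$ is invariant under topological equivalence, $X$ inherits them. Your perturbation arguments are not wasted --- they are essentially the content of Lemmas~\ref{Lemma1}--\ref{Lemma8} and Remarks~\ref{Remark1}--\ref{Remark5} --- but they should be aimed at proving density of $\mathcal{P}_d$, not at directly producing a non-equivalent $Y$.
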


Let $\Omega\subset\mathbb{R}^2$ be a polycycle (also known as a graph) composed by $n$ hyperbolic saddles $p_1,\dots p_n$, whose eigenvalues are given by $\mu_i<0<\nu_i$, $i\in\{1,\dots,n\}$. The \emph{hyperbolicity ratio} of $p_i$ is given by $r_i=\frac{|\mu_i|}{\nu_i}$. Cherkas \cite{Cherkas} proved that if $r(\Omega):=r_1\dots r_n>1$, (resp. $r(\Omega)<1$), then $\Omega$ is stable (resp. unstable). Therefore, we say that $\Omega$ is \emph{generic} if $r(\Omega)\neq1$. In special, observe that a generic polycycle cannot be accumulated by periodic orbits. 

\begin{definition}\label{Def00}
Given $d\geqslant1$ and $X\in\mathfrak{X}_d$, we say that $X$ is \emph{strong structurally stable} (or \emph{structurally stable in the sense of Andronov-Pontrjagin}) if for every $\varepsilon>0$ there is a neighborhood $N\subset\mathfrak{X}_d$ of $X$ and a map $h\colon N\to\text{Hom}(\mathbb{S}^2,\mathbb{S}^2)$ such that $h_Y\colon\mathbb{S}^2\to\mathbb{S}^2$ is an homeomorphism, preserving $\mathbb{S}^1$, which sends orbits of $p(X)$ to orbits of $p(Y)$, either preserving or reversing the direction of all orbits, and such that the following statements holds.
\begin{enumerate}[label=(\roman*)]
	\item $h_X=Id_{\mathbb{S}^2}$;
	\item For every $Y\in N$ and $x\in\mathbb{S}^2$, $||h_Y(x)-x||<\varepsilon$.
\end{enumerate}
\end{definition}

Let $\Sigma_d^s\subset\mathfrak{X}_d$ be the family of the strong structurally stable vector fields of $\mathfrak{X}_d$ and observe that $\Sigma_d^s\subset\Sigma_d$. In our second main result, we provide necessary conditions for strong structural stability.

\begin{theorem}\label{Main2}
	If $X\in\Sigma_d^s$, then $X$ satisfies statements $(b)$ and $(c)$ of Theorem~\ref{Main1} and the following restricted versions of statements $(a)$ and $(d)$. 
	\begin{enumerate}
		\item[$(a')$] $p(X)$ have a finite number of singularities, all hyperbolic;
		\item[$(d')$] The $\alpha$ and $\omega$-limits of every orbit of $p(X)$ is either a singularity, a closed orbit or a generic polycycle contained in $\Lambda$.
	\end{enumerate}
\end{theorem}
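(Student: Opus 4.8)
The plan is to derive Theorem~\ref{Main2} from Theorem~\ref{Main1} by refining, for the two clauses that actually change, the perturbation arguments behind Theorem~\ref{Main1}. Since $\Sigma_d^s\subset\Sigma_d$, every $X\in\Sigma_d^s$ already satisfies statements $(a)$--$(d)$ of Theorem~\ref{Main1}; statements $(b)$ and $(c)$ are literally what we must prove, so it remains only to strengthen $(a)$ to $(a')$ and $(d)$ to $(d')$. Both strengthenings will be proved by contraposition: assuming that $p(X)$ has a non-hyperbolic singularity (resp.\ a non-generic polycycle contained in $\Lambda$ that is a one-sided limit set of some orbit), I will produce vector fields $Y$ arbitrarily close to $X$ in the coefficients topology which are \emph{not} related to $X$ by a homeomorphism satisfying the conditions of Definition~\ref{Def00}. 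The only rigidity I will exploit is that the curves $\gamma_1,\dots,\gamma_4$ and the equator $\mathbb{S}^1$ are invariant for $p(Y)$ for \emph{every} $Y\in\mathfrak{X}_d$; consequently all admissible perturbations are coefficient perturbations of $f$ and $g$ (so $\deg f,\deg g\leqslant d$ and the invariance of $\Lambda$ hold for free), and they act only on the transverse dynamics along $\Lambda$ and on the singularities lying on $\Lambda$ or at infinity.

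For $(a')$, assume $p$ is a non-hyperbolic singularity of $p(X)$; by $(a)$ it is topologically a node or a saddle, so by the local classification of planar singularities (center-manifold reduction in the semi-hyperbolic case, blow-up in the nilpotent and linearly-zero cases) its non-trivial ``central'' behaviour has odd contact order $2k+1\geqslant 3$. I then distinguish the position of $p$: a corner of the octothorpe, an interior point of some $\gamma_i$, a point off $\Lambda$, or a point of $\mathbb{S}^1$. For $p$ in the finite plane one identifies the coefficients of $f,g$ that control the degeneracy and unfolds it; a generic such unfolding either creates extra singularities near $p$ or turns $p$ into a topological saddle-node, contradicting the persistence of the topological type forced by $X\in\Sigma_d$ (this is exactly the obstruction already used to prove $(a)$). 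The genuinely new case is $p\in\mathbb{S}^1$: because $P$ and $Q$ carry the factors $x(x-1)$ and $y(y-1)$, the leading form of $P$ is divisible by $x^2$ and that of $Q$ by $y^2$, so the ``axial'' infinite singularities are robustly degenerate and cannot be removed by any perturbation inside $\mathfrak{X}_d$. Here I analyse $p(X)$ in the Poincar\'e charts at infinity, reduce the motion along $\mathbb{S}^1$ to a one-dimensional field, and show that although the topological type of $p$ persists, the unfolding displaces $p$ (and, in the saddle case, its separatrices entering the disk) at a rate incommensurate with the size of the perturbation; combined with $h_X=\mathrm{Id}$ this is incompatible with the uniform estimate $(ii)$ of Definition~\ref{Def00} on any neighborhood $N$.

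For $(d')$, assume $\Omega\subset\Lambda$ is a polycycle with $r(\Omega)=1$ which is the $\alpha$- or $\omega$-limit of some orbit of $p(X)$. By $(a)$ its vertices are hyperbolic saddles, so the asymptotic expansion of the first-return map along $\Omega$ studied by Cherkas \cite{Cherkas} applies and degenerates precisely at leading order. Since $\Lambda$ is invariant for all of $\mathfrak{X}_d$, the graph $\Omega$ and its vertices persist for every $Y$ near $X$, and I compute the dependence of $r(\Omega_Y)$, and of the subleading Cherkas coefficients, on the coefficients of $f,g$ at the vertices. Perturbing those coefficients one can drive $r(\Omega_Y)$ to either side of $1$, so Cherkas's criterion makes $\Omega$ attracting for some $Y_1\in N$ and repelling for some $Y_2\in N$; since $\Omega$ was a one-sided limit set for $X$, this changes the asymptotics of the nearby orbits (a limit cycle is born from $\Omega$, or the nearby orbits acquire a different limit set), so $Y_1$ and $Y_2$ are not even topologically equivalent to one another. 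Hence there is no neighborhood $N$ on which all the equivalences with $X$ hold, let alone via $\varepsilon$-close homeomorphisms: explicitly, a point whose orbit accumulates on $\Omega$ in $X$ must be sent, in $Y_1$, to a point of a fixed transversal far from $\Omega$, forcing $\|h_{Y_1}(x)-x\|\geqslant\varepsilon$ for a suitable $\varepsilon>0$.

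I expect the main obstacle to be the analysis at infinity: producing explicit polynomial perturbations inside the constrained family $\mathfrak{X}_d$ — no bump functions are available — and converting their local effect (displacement of a semi-hyperbolic singularity on $\mathbb{S}^1$, a sign change of a Cherkas coefficient, the birth of a limit cycle near a polycycle meeting $\mathbb{S}^1$) into a \emph{clean, quantitative} violation of clause $(ii)$ of Definition~\ref{Def00}, valid for \emph{every} admissible choice of homeomorphism on \emph{any} neighborhood. Keeping careful track of how the factors $x(x-1)$ and $y(y-1)$ constrain the leading forms of $P$ and $Q$ — and hence which infinite singularities and which polycycle hyperbolicity ratios are rigid — is where the argument is most delicate.
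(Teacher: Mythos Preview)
There is a structural circularity and a real mathematical gap. First, you invoke Theorem~\ref{Main1} to get $(a)$--$(d)$ for free, but in the paper's logic Theorem~\ref{Main1} is proved \emph{after} Theorem~\ref{Main2}: its proof uses the density of $\mathcal{P}_d$ (Theorem~\ref{Main3}), which in turn rests on the perturbation lemmas that simultaneously establish Theorem~\ref{Main2}. So ``since $\Sigma_d^s\subset\Sigma_d$, we already have $(a)$--$(d)$'' is not available at this point; the paper proves each clause of Theorem~\ref{Main2} directly (Lemmas~\ref{Lemma1}--\ref{Lemma0}).

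The substantive gap is in your $(a')$ argument. You assume that a non-hyperbolic singularity which is topologically a node or saddle must be semi-hyperbolic/nilpotent with ``odd contact order $2k+1\geqslant3$,'' and you identify the delicate case as the axial points of $\mathbb{S}^1$, which you claim are ``robustly degenerate.'' Both claims are false. A weak focus is non-hyperbolic yet topologically a node, and it fits none of your center-manifold/blow-up cases; this is precisely the case the paper has to handle. Conversely, the axial infinite singularities are \emph{not} robustly degenerate: at $u_0=0$ in chart $U_1$ both eigenvalues equal $-f_d(1,0)$, so for $a_{d,0}\neq0$ they are hyperbolic nodes, and the paper (Lemma~\ref{Lemma2}) shows that if $a_{d,0}=0$ one can bifurcate an extra singularity, contradicting strong stability. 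The paper's route to $(a')$ is: first prove every singularity is \emph{simple} (Lemma~\ref{Lemma2}, including explicit polynomial perturbations at infinity); then a non-hyperbolic simple singularity has $\det>0$, trace $=0$, hence is a weak focus or center, necessarily finite and off $\Lambda$; Lemma~\ref{Lemma3} rules out centers via the rotation-type perturbation $f\mapsto f-\varepsilon g$, $g\mapsto g+\varepsilon f$; and Lemma~\ref{Lemma7} then flips the stability of a weak focus, producing a limit cycle by Poincar\'e--Bendixson and contradicting the $\varepsilon$-closeness of $h_Y$. Your ``displacement at a rate incommensurate with the perturbation'' mechanism does not do this job. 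Your $(d')$ sketch is closer in spirit to the paper's Lemma~\ref{Lemma8}, but the actual contradiction there is again the birth of an extra limit cycle from the polycycle after moving $r(\Omega)$ across $1$, not a direct $\|h_Y(x)-x\|\geqslant\varepsilon$ estimate on a transversal.
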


\begin{definition}
Let $\mathcal{P}_d\subset\mathfrak{X}_d$ be the family of vector fields satisfying statement $(c)$ of Theorem~\ref{Main1}, statements $(a')$ and $(d')$ of Theorem~\ref{Main2} and the following restricted version of statement $(b)$.
\begin{enumerate}
	\item[$(b')$] $p(X)$ have a finite number of closed orbits, all hyperbolic;
\end{enumerate}
\end{definition}

In our third main result, we provide sufficient conditions for strong and non-strong structural stability and we prove that such families are dense in $\mathfrak{X}_d$.

\begin{theorem}\label{Main3}
	The following statements hold.
	\begin{enumerate}[label=(\alph*)]
		\item $\mathcal{P}_d$ is open and dense in $\mathfrak{X}_d$;
		\item $\mathcal{P}_d\subset\Sigma_d^s$. 
	\end{enumerate}
	In special, $\Sigma_d^s$ and $\Sigma_d$ are open and dense in $\mathfrak{X}_d$.
\end{theorem}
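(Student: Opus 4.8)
The plan is to prove the three assertions in the order below. \emph{Openness of $\mathcal P_d$.} Each of the four properties defining $\mathcal P_d$ is open in the coefficients topology. Hyperbolicity of a singularity of $p(X)$, in the affine plane or (read in the Poincar\'e charts) at infinity, persists under small perturbations by the Implicit Function Theorem and continuity of eigenvalues; together with compactness of $\mathbb S^2$ this makes $(a')$ open, with the number and topological type of singularities locally constant, and likewise a hyperbolic closed orbit persists with its stability type through its Poincar\'e return map. For $(c)$ and $(d')$ the decisive point is that $\Lambda$ is invariant for \emph{every} element of $\mathfrak X_d$, so the part of the separatrix structure carried by the octothorpe is frozen; the remaining finitely many separatrices move continuously on $\mathbb S^2$, and as their $\alpha$- and $\omega$-limits are hyperbolic singularities, hyperbolic closed orbits or generic polycycles (all robust, the condition $r(\Omega)\neq1$ being open) their qualitative behaviour cannot change. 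Hence $(a'),(b'),(c),(d')$ are open and so is $\mathcal P_d$; in particular no closed orbit can be created or escape to infinity near a point of $\mathcal P_d$.

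\emph{Density of $\mathcal P_d$.} From an arbitrary $X\in\mathfrak X_d$ I would reach $\mathcal P_d$ through a finite chain of small perturbations inside $\mathfrak X_d$, each chosen small enough to preserve the open properties already obtained. First, a transversality argument in the finite-dimensional parameter space $\mathbb R^{(d+1)(d+2)}$ makes all singularities hyperbolic, hence finitely many; one treats separately the singularities pinned to $\Lambda$, where the linearization is triangular and it suffices that $f$ and $g$ have simple roots along the relevant lines, and the singularities at infinity, controlled by the top-degree parts of $f$ and $g$. Second, any saddle connection $\gamma\not\subset\Lambda$ is destroyed: on a transversal to $\gamma$ the splitting of the two relevant separatrices of $X+sZ$, with $Z\in\mathfrak X_d$ and $s$ small, is to first order a Melnikov-type linear functional $Z\mapsto M_Z$, and since $\gamma\setminus\Lambda\neq\varnothing$ one shows $M_Z\not\equiv0$ on $\mathfrak X_d$. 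Third, an entirely analogous displacement-function argument (closed orbits necessarily lie in the interior of one of the nine invariant regions, away from $\Lambda$) makes every closed orbit hyperbolic and isolated, destroying in particular any annuli of periodic orbits. Fourth, after these steps every polycycle consists of saddle separatrices, hence by $(c)$ lies in the fixed finite graph $\Lambda$, which carries only finitely many combinatorial cycles; a last small perturbation of the eigenvalue ratios at the saddles on $\Lambda$ yields $r(\Omega)\neq1$ for all of them, which, with a Poincar\'e--Bendixson description of limit sets on the genus-zero sphere, gives $(d')$. The second and third steps are the main obstacle: bump functions are unavailable, so one must genuinely prove that the relevant Melnikov and displacement functionals are not identically zero on the \emph{bounded-degree} space $\mathfrak X_d$ — this is where the rigidity of polynomial vector fields must be faced.

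\emph{The inclusion $\mathcal P_d\subset\Sigma_d^s$.} Given $X\in\mathcal P_d$ I would follow the classical ``Morse--Smale implies structurally stable'' scheme adapted to the Poincar\'e sphere with the invariant set $\Lambda$. The phase portrait of $p(X)$ has finitely many hyperbolic singularities, finitely many hyperbolic closed orbits, finitely many separatrices with tame limit sets, and finitely many generic polycycles inside $\Lambda$; by the Markus--Neumann--Peixoto classification of flows on surfaces this decomposes $\mathbb S^2$ into finitely many canonical regions with parallel flow. For $Y$ in a small neighbourhood $N$ of $X$ the singularities persist (Implicit Function Theorem) with their types and near their positions, the closed orbits persist with their stability types, $\Lambda$ is still invariant and $p(Y)|_\Lambda$ is conjugate to $p(X)|_\Lambda$ by a homeomorphism close to the identity, and the separatrices and generic polycycles vary continuously, so the canonical region decomposition of $p(Y)$ is combinatorially the same. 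One defines $h_Y$ first on this skeleton by the natural correspondences — preserving $\mathbb S^1\subset\Lambda$ — and extends it region by region through the parallel-flow normal forms, gluing consistently along the boundaries; choosing the construction to be the identity at $Y=X$ gives $h_X=\mathrm{Id}$, and shrinking $N$ makes every ingredient, and hence $h_Y$, uniformly $\varepsilon$-close to the identity, which is exactly Definition~\ref{Def00}. The delicate points are the consistent gluing along $\Lambda$, whose saddle connections are non-generic in the usual sense but forced and robust within $\mathfrak X_d$, and the uniform $\varepsilon$-control.

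\emph{The ``in special'' statement.} Openness and density of $\mathcal P_d$, together with $\mathcal P_d\subset\Sigma_d^s\subset\Sigma_d$, at once give density of $\Sigma_d^s$ and $\Sigma_d$. The set $\Sigma_d$ is open straight from Definition~\ref{Def0}: if $X$ is structurally stable with witnessing neighbourhood $N$, then every $Y\in N$ is topologically equivalent to $X$, hence the elements of $N$ are mutually equivalent, so $N\subset\Sigma_d$. Finally $\Sigma_d^s$ is open because $\Sigma_d^s=\mathcal P_d$: the inclusion $\mathcal P_d\subset\Sigma_d^s$ is the previous step, and $\Sigma_d^s\subset\mathcal P_d$ follows from Theorem~\ref{Main2} once one notes that a strong structurally stable field cannot have a non-hyperbolic closed orbit — being non-semistable by $(b)$ and isolated by the finiteness in $(b)$, such an orbit has even multiplicity and, by the perturbation arguments of the density part, can be split or annihilated inside $\mathfrak X_d$, which would force $h_Y$ either to change the number of closed orbits of $Y$ near it or to fail being $\varepsilon$-close to the identity.
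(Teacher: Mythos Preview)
Your treatment of parts (a) and (b) and of the density of $\Sigma_d^s$, $\Sigma_d$ is essentially the paper's own approach: openness and density of $\mathcal P_d$ via the chain of perturbation arguments (the paper packages these as Remarks~\ref{Remark1}--\ref{Remark5} built on Lemmas~\ref{Lemma1}--\ref{Lemma0}), and the inclusion $\mathcal P_d\subset\Sigma_d^s$ via the Peixoto--Peixoto/Sotomayor canonical--region construction (Lemmas~\ref{Lemma9}--\ref{Lemma13} and the Appendix). Your argument that $\Sigma_d$ is open straight from Definition~\ref{Def0} is also fine.

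There is, however, a genuine error in your argument that $\Sigma_d^s$ is open. You claim $\Sigma_d^s=\mathcal P_d$, and for the inclusion $\Sigma_d^s\subset\mathcal P_d$ you argue: a non-hyperbolic closed orbit of $X\in\Sigma_d^s$ is non-semistable by Theorem~\ref{Main2}(b), hence ``has even multiplicity''. This is backwards. Semi-stable limit cycles are exactly those of \emph{even} multiplicity; a non-semistable non-hyperbolic limit cycle has \emph{odd} multiplicity $\geqslant 3$. So the orbit you must rule out is precisely the odd-multiplicity one, and your ``split or annihilate'' step is not available: the paper's Lemma~\ref{Lemma6} only handles even multiplicity, and whether odd-multiplicity non-hyperbolic limit cycles can be destroyed by perturbations inside $\mathfrak X_d$ is exactly the open Question~\ref{Q1}. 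The paper explicitly says that $\Sigma_d^s\setminus\mathcal P_d$ consists, if nonempty, of strong structurally stable fields with odd-degree non-hyperbolic limit cycles, and only for $d=1$ (via Kooij's theorem) can one conclude $\Sigma_1^s=\mathcal P_1$. So you cannot deduce openness of $\Sigma_d^s$ from $\Sigma_d^s=\mathcal P_d$ in general; you need a direct argument for openness of $\Sigma_d^s$ (or to weaken the claim).
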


In our fourth main result, we prove that almost every $X\in\mathfrak{X}_d$ (in the topological sense) does not have a global first integral in $\mathbb{R}^2$ (in special, $p(X)$ does not have a global first integral in $\mathbb{S}^2$).

\begin{theorem}\label{Main4}
	Let $\Upsilon_d\subset\mathfrak{X}_d$ be the set of vector fields which do not have a global first integral in $\mathbb{R}^2$. Then $\mathcal{P}_d\subset\Upsilon_d$. In special, $\Upsilon_d$ is dense in $\mathfrak{X}_d$. 
\end{theorem}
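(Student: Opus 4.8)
The plan is to prove the inclusion $\mathcal{P}_d\subset\Upsilon_d$ directly; the density of $\Upsilon_d$ in $\mathfrak{X}_d$ is then immediate, since $\mathcal{P}_d$ is dense in $\mathfrak{X}_d$ by Theorem~\ref{Main3}(a). Here I understand a \emph{global first integral of $X$ in $\mathbb{R}^2$} to be a continuous $H\colon\mathbb{R}^2\to\mathbb{R}$ that is constant along every orbit of $X$ but constant on no open subset of $\mathbb{R}^2$. So the whole point is: given $X\in\mathcal{P}_d$, exhibit an open subset of $\mathbb{R}^2$ on which any such $H$ would be forced to be constant, which is the desired contradiction.

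The starting observation is that the closed square $R=[0,1]^2$ is compact and invariant under the flow $\phi_t$ of $X$: each line $x=0$, $x=1$, $y=0$, $y=1$ makes $P$ or $Q$ vanish identically and is therefore invariant, so no orbit crosses $\partial R$ and every orbit issuing from $(0,1)^2$ stays there for all time. Now fix $X\in\mathcal{P}_d$, suppose $X$ has a global first integral $H$, and argue as follows. By $(a')$ and $(b')$ the field $X$ has finitely many singularities (all hyperbolic) and finitely many closed orbits, so the union of the singular set, the closed orbits, and the stable manifolds of the hyperbolic saddles has zero Lebesgue measure; choose $q_0\in(0,1)^2$ outside this union. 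Since the forward orbit of $q_0$ lies in the compact invariant set $R$, the set $\omega(q_0)$ is a nonempty compact connected invariant subset of $R$, and the Poincaré--Bendixson theorem (applicable because $X$ has finitely many singularities) forces $\omega(q_0)$ to be a singularity, a periodic orbit, or a polycycle. In each case the goal is to produce an open $U\subset\mathbb{R}^2$ and a constant $c$ with $H\equiv c$ on $U$.

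If $\omega(q_0)=\{p\}$, then $p$ is neither a saddle (by the choice of $q_0$) nor a repelling node/focus (it is a forward limit of $q_0\neq p$), hence an attracting hyperbolic node or focus; its basin $B(p)$ is open and, from $H(\phi_t(q))\equiv H(q)$, $\phi_t(q)\to p$ and continuity of $H$, one gets $H\equiv H(p)$ on $U=B(p)$. If $\omega(q_0)=\gamma$ is a periodic orbit, then (as $q_0\notin\gamma$) $\gamma$ is a limit cycle with an open set $U$, a one-sided neighbourhood of $\gamma$, all of whose orbits have $\omega$-limit $\gamma$; since $H$ is constant on the orbit $\gamma$, uniform continuity of $H$ on a compact neighbourhood of $\gamma$ together with $\mathrm{dist}(\phi_t(q),\gamma)\to0$ forces $H$ to equal that constant on all of $U$. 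If $\omega(q_0)=\Omega$ is a polycycle, then $\Omega\subset R\subset\mathbb{R}^2$ and, by $(d')$, $\Omega$ is a generic polycycle contained in $\Lambda$, built from hyperbolic saddles joined by connecting orbits; being the $\omega$-limit of $q_0\notin\Omega$ it is attracting from that side, so by Cherkas's theorem \cite{Cherkas} there is an open set $U$, a one-sided neighbourhood of $\Omega$, with $\omega(q)=\Omega$ for every $q\in U$. Since $H$ is constant on each connecting orbit of $\Omega$ and, by continuity, takes the same value at the saddles shared by consecutive arcs, $H$ is a single constant $c$ on $\Omega$, hence $H\equiv c$ on $U$ as before. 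In all three cases $H$ is locally constant, a contradiction; thus $X\in\Upsilon_d$. Finally, a global first integral of $p(X)$ on $\mathbb{S}^2$ would restrict to one of $X$ on the hemisphere $\mathbb{R}^2$ (still non-constant on open sets, as $\mathbb{R}^2$ is open and dense in $\mathbb{S}^2$), so $p(X)$ admits none either.

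I expect two points to require care, though neither should be severe. The first is the claim used in the polycycle case that near $\Omega=\omega(q_0)$ the first-return map is defined on a genuine interval and is monotone, so that a full open set — not merely the orbit of $q_0$ — is attracted to $\Omega$; this is where the hyperbolicity of the saddles of $\Omega$ (from $(a')$) and its genericity (from $(d')$, via Cherkas) enter, and one should invoke the standard description of such return maps. The second is the passage from ``$H$ constant on each orbit of an $\omega$-limit set and continuous'' to ``$H$ constant on that limit set and on an open neighbourhood attracted to it'': for a node it is transparent, but for $\gamma$ and $\Omega$ it rests on uniform continuity of $H$ on a compact neighbourhood of the limit set, plus the small remark that the constants $H$ assigns to the distinct connecting orbits of a polycycle agree at the shared saddle points. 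Everything else is routine bookkeeping with the Poincaré--Bendixson trichotomy and with the defining properties of $\mathcal{P}_d$.
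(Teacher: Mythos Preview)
Your argument is correct and complete, but it proceeds along a different line from the paper. The paper's proof is shorter and hinges on condition~$(c)$: assuming $X\in\mathcal{P}_d$ has a global first integral, $X$ can have no limit cycles and no attracting or repelling equilibria, so every singularity is a hyperbolic saddle; then for \emph{any} point $p\in(0,1)^2$ the $\alpha$- and $\omega$-limits of the orbit through $p$ must be saddles, producing a saddle connection $\gamma\not\subset\Lambda$ in direct violation of~$(c)$. You instead bypass~$(c)$ altogether: by choosing $q_0$ off the stable manifolds of the saddles and invoking~$(d')$, you guarantee that $\omega(q_0)$ is an \emph{attracting} object (a hyperbolic sink, a limit cycle, or a generic polycycle) and obtain the contradiction from $H$ being constant on its open basin. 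Your route is more explicit---in particular it treats the polycycle case carefully via Cherkas, whereas the paper absorbs it tacitly into the ``no source or sink'' clause---at the cost of the extra bookkeeping needed to select $q_0$ and to pass from constancy of $H$ on the limit set to constancy on the basin. Both arguments ultimately rest on the invariance of the square $[0,1]^2$ and the Poincar\'e--Bendixson trichotomy; the paper exploits the structure of~$\mathcal{P}_d$ through~$(c)$, while you exploit it through~$(a')$, $(b')$ and~$(d')$.
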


When working with the coefficients topology, there is an open question that keeps everyone from obtaining necessary \emph{and} sufficient conditions for structural stability in planar polynomial vector fields.

\begin{question}\label{Q1}
	Let $\mathcal{X}_n$ be the set of the planar polynomial vector fields of degree at most $n$, endowed with the coefficients topology. If $X\in\mathcal{X}_n$ has a non-hyperbolic limit cycle of odd degree, then is $X$ structurally unstable in $\mathcal{X}_n$?
\end{question}

In simple words, \emph{are non-hyperbolic limit cycles of odd degree structurally unstable in the set planar polynomial vector fields?} This question was explicitly raised both by Sotomayor \cite[Problem $1.1$]{Soto1985} and Shafer \cite[Question $3.4$]{Sha1987} and kept both of them from obtaining a complete characterization of structural stability. The structural instability of non-hyperbolic limit cycles of even degree follows by an application of the theory of rotated vector fields. More precisely, if $X=(P,Q)\in\mathcal{X}_n$ has an non-hyperbolic limit cycle of even degree, then let $Y=(R,S)\in\mathcal{X}_n$ be given by
	\[R(x,y)=P(x,y)-\lambda Q(x,y), \quad S(x,y)=Q(x,y)+\lambda P(x,y),\]
with $|\lambda|>0$ small enough. It follows from the theory of rotated vector fields (see \cite[Theorem $2$, p. $387$]{Perko2001} that for $|\lambda|>0$ small enough, $\gamma$ has either vanished or had split in at least two limit cycles. For more details, see \cite[Section $4.6$]{Perko2001}. In the case of smooth or analytical vector fields it is well known that non-hyperbolic limit cycles (in particular, those with odd degree) are structurally unstable. Roughly speaking, the proof work as follows. Let $X=(P,Q)$ be a planar vector field of class $C^r$, $r\geqslant 1$, and let $\gamma(t)$ be a limit cycle of $X$, with period $T>0$. Andronov et al \cite[p. $124$]{And1971}, proved that there are a neighborhood $G\subset\mathbb{R}^2$ of $\gamma$ and a function $F\colon G\to\mathbb{R}$ of class $C^{r+1}$, such that 
	\[F(\gamma(t))=0, \quad \frac{\partial F}{\partial x}(\gamma(t))^2+\frac{\partial F}{\partial y}(\gamma(t))^2>0,\]
for all $t\in[0,T]$. Peixoto \cite[Lemma~$6$]{Pei1959}, proved that if $X$ is analytical, then $F$ is also analytical. With such function, the canonical way of proving that a non-hyperbolic limit cycle is not structurally stable is by considering the perturbed vector field $Y=(R,S)$ given by,
	\[R(x,y)=P(x,y)+\lambda F(x,y)\frac{\partial F}{\partial x}(x,y), \quad S(x,y)=Q(x,y)+\lambda F(x,y)\frac{\partial F}{\partial y}(x,y),\]
with $|\lambda|>0$ small enough. By using the Poincar\'e-Bendixson Theory, one can prove that for $|\lambda|>0$ small enough, $\gamma$ had split in at least two limit cycles. For more details, see \cite[$\mathsection15.2$]{And1971} and \cite[Lemma~$6$]{Pei1959}. Observe that if $F$ is polynomial, then $\gamma$ is an algebraic limit cycle (see Section~\ref{sub6}). However, since not every limit cycle of a polynomial vector field is algebraic (see \cite{GasGiaTor2007}), it follows that even if $X$ is polynomial, $F$ may not be. Hence, the usual tools seems to not work in this case.

In this paper we were not able to solve Question~\ref{Q1}. As a result, it follows from Theorems~\ref{Main2} and \ref{Main3} that conditions $(a')$, $(b)$, $(c)$ and $(d')$ are necessary for strong structural stability and if we replace condition $(b)$ by $(b')$, then we have a set of sufficient conditions for strong structurally stability. Moreover, observe that the difference between conditions $(b)$ and $(b')$ is precisely given by the non-hyperbolic limit cycles of odd degree. More precisely, it follows from Theorems~\ref{Main2} and \ref{Main3} that $\Sigma_d^s\backslash\mathcal{P}_d$ is given precisely by the strong structurally stable vector fields with non-hyperbolic limit cycles of odd degree, if any. For the particular case $d=1$, it follows from a result of Kooij \cite{Kooij} that we can ensure $\Sigma_1^s=\mathcal{P}_1$ and thus we have a \emph{complete characterization} of the structurally stable vector fields for this case, i.e. we have a practical set of necessary \emph{and} sufficient conditions for structural stability.

\begin{remark}
	It follows from Theorem $2.5.4$ of \cite{Kooij} that if $X\in\mathfrak{X}_1$ has a limit cycle, then it is unique and hyperbolic. In particular, it follows that if $X\in\Sigma_1^s$ has a limit cycle, then it is hyperbolic. Thus, it follows from Theorem~\ref{Main2} that if $X\in\Sigma_1^s$, then $X\in\mathcal{P}_1$. Hence, for $d=1$ we have $\Sigma_1^s=\mathcal{P}_1$, i.e. we have a complete characterization of the strong structurally stable vector fields. For the phase portraits of $\mathcal{P}_1$, see \cite{BasBuzSan2022}.
\end{remark}

In our final main result, we prove that non-hyperbolic \emph{algebraic} limit cycles (see Section~\ref{sub6}) are not structurally stable.

\begin{theorem}\label{Main5}
	Non-hyperbolic algebraic limit cycles are not structurally stable.
\end{theorem}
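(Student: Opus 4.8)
The plan is to exploit the key observation, already noted in the excerpt, that an \emph{algebraic} limit cycle $\gamma$ of a polynomial vector field $X=(P,Q)\in\mathfrak{X}_d$ comes equipped with a polynomial $F$ such that $F(\gamma(t))=0$ and $\nabla F\neq 0$ along $\gamma$. Recall that $\gamma$ being algebraic means there is an irreducible polynomial $F$ with $\{F=0\}\supset\gamma$, and that the classical construction of Andronov et al.\ (recalled above) gives a $C^{r+1}$ defining function on a neighborhood $G$ of $\gamma$; the point is that when $\gamma$ is algebraic one may take this function to be the restriction of the \emph{polynomial} $F$ itself, possibly after shrinking $G$ so that $\nabla F$ does not vanish on $G$ (which is possible since $\nabla F$ vanishes only at finitely many points of the curve $\{F=0\}$, none of them on $\gamma$, because $\gamma$ is a smooth closed curve and an isolated component of the real algebraic set).

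With this $F$ in hand, the strategy is the standard perturbation from the smooth/analytic theory, but now it stays inside $\mathfrak{X}_d$ up to a correction. First I would form the candidate perturbation
\[
R(x,y)=P(x,y)+\lambda\,F(x,y)\frac{\partial F}{\partial x}(x,y),\qquad
S(x,y)=Q(x,y)+\lambda\,F(x,y)\frac{\partial F}{\partial y}(x,y),
\]
and note that along $\gamma$ one has $\dot F = \langle\nabla F,(R,S)\rangle = \lambda F(\partial_xF^2+\partial_yF^2)$ modulo terms that vanish on $\{F=0\}$, so the sign of $\lambda$ forces orbits to cross $\{F=0\}$ transversally in a fixed direction near $\gamma$; a Poincar\'e--Bendixson / annulus argument then shows that for $0<|\lambda|$ small the cycle $\gamma$ either disappears or breaks into at least two limit cycles, exactly as in the $C^1$ case recalled before Theorem~\ref{Main5}. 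The one genuine subtlety is that $Y=(R,S)$ has degree $\deg P + \deg F + \deg(\partial_xF)$, which is generally larger than $d$, so $Y\notin\mathfrak{X}_d$; to land back in $\mathfrak{X}_d$ I would instead perturb by $\lambda$ times the truncation of $F\nabla F$ to degree $d$, or — more robustly — argue that the codimension-one splitting of $\gamma$ produced by the full perturbation is detected by a finite jet, hence is already realized by a polynomial perturbation of bounded degree, and then rescale. Equivalently, one can use the rotated-vector-field trick $R=P-\lambda Q$, $S=Q+\lambda P$, which \emph{does} preserve $\mathfrak{X}_d$ (it preserves the factors $x(x-1)$ and $y(y-1)$ only up to lower order, so a small extra correction supported away from $\Lambda$ is needed) together with the displacement-function estimate coming from $F$ to control the direction of motion of $\gamma$; the divergence integral $\oint_\gamma \mathrm{div}(X)\,dt=0$ for a non-hyperbolic cycle is what makes the first-order displacement have a strict sign under this perturbation.

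Concretely, the steps in order: (1) produce the polynomial defining function $F$ for the algebraic cycle $\gamma$ and fix a tubular neighborhood $G$ on which $\nabla F\neq0$ and $\gamma$ is the only cycle; (2) write down a perturbation $X_\lambda\in\mathfrak{X}_d$ with $\rho(X,X_\lambda)\to0$ as $\lambda\to0$, built from $F$ and $\nabla F$ (truncated if necessary) so that $\mathrm{sign}\,\dot F|_{\{F=0\}\cap G}$ is constant and equals $\mathrm{sign}\,\lambda$; (3) compute the first-order variation of the Poincar\'e return map of $X_\lambda$ on a transversal to $\gamma$ and show it is $\lambda$ times a nonzero integral (here is where non-hyperbolicity — $\oint\mathrm{div}=0$ — is used, guaranteeing the \emph{unperturbed} return map has a degenerate fixed point that the perturbation genuinely unfolds); (4) conclude by Poincar\'e--Bendixson that $X_\lambda$ has a different number of limit cycles near $\gamma$ than $X$ for all small $\lambda\neq0$ of an appropriate sign, hence $X_\lambda$ is not topologically equivalent to $X$, so $X\notin\Sigma_d$.

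The main obstacle I anticipate is step (2): keeping the perturbation inside $\mathfrak{X}_d$ while retaining enough of the geometric effect of $F\nabla F$ to move the cycle. Truncating $F\nabla F$ to degree $d$ can kill the leading behavior that guarantees transversal crossing of $\{F=0\}$; the rotated-field alternative is cleaner degree-wise but only moves the cycle in a $\lambda$-monotone way if the rotation is genuinely a rotated family near $\gamma$, which must be checked against the forced factorization $P=x(x-1)f$, $Q=y(y-1)g$ (the rotation mixes these, so a compensating term is needed to restore the invariant octothorpe, and one must verify this term can be chosen of degree $\le d$ and vanishing on $\Lambda$). I expect the resolution to be that, since $\gamma$ lies in the open region bounded away from $\Lambda$, any perturbation can be multiplied by a fixed polynomial vanishing on the four lines $x=0,1$, $y=0,1$ without affecting the octothorpe and without changing the sign of $\dot F$ near $\gamma$; this localizes the argument away from $\Lambda$ and makes the degree bookkeeping go through, at the cost of a slightly larger but still finite $d$, which is harmless since $\mathfrak{X}_d\subset\mathfrak{X}_{d'}$ for $d'\ge d$ and structural stability in the larger space is what we ultimately contradict.
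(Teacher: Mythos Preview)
Your proposal contains the right ingredients but conflates two incompatible mechanisms, and the one you lean on in step~(2) does not work. For the perturbation $Y = X + \lambda F\nabla F$, the added term vanishes identically on $\{F=0\}$, so $\gamma$ remains an invariant periodic orbit for $Y$: orbits do \emph{not} cross $\{F=0\}$ transversally. Indeed your own formula $\dot F = \lambda F|\nabla F|^2$ modulo terms vanishing on $\{F=0\}$ says precisely that $\dot F|_{\{F=0\}}=0$, so the conclusion ``$\gamma$ disappears or splits'' in step~(2) is unsupported. What actually happens --- and what your step~(3) gestures toward without quite saying --- is that $\gamma$ persists as a periodic orbit of $Y$ but its characteristic exponent changes: since $\mathrm{div}(F\nabla F)|_{\{F=0\}} = |\nabla F|^2$, one gets $r_Y(\gamma) = r_X(\gamma) + \lambda\int_0^T |\nabla F(\gamma(t))|^2\,dt$, and the non-hyperbolicity hypothesis $r_X(\gamma)=0$ makes this nonzero for $\lambda\neq 0$. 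Thus $\gamma$ becomes a \emph{hyperbolic} cycle of $Y$, with stability prescribed by the sign of $\lambda$; a trapping-annulus argument as in Lemmas~\ref{Lemma7} and~\ref{Lemma8} then forces a second limit cycle to bifurcate nearby. That is the paper's argument.

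On preserving the octothorpe you are overthinking. The paper neither truncates nor rotates; it writes the perturbation directly in the required form,
\[
R = x(x-1)\bigl(f + \varepsilon\delta_1 F\,\partial_x F\bigr),\qquad
S = y(y-1)\bigl(g + \varepsilon\delta_2 F\,\partial_y F\bigr),
\]
which lands in $\mathfrak{X}_{d'}$ for $d' = \max(d, 2\deg F - 1)$, and simply accepts the degree increase; the theorem is about instability under polynomial perturbations preserving $\Lambda$, not within the original $\mathfrak{X}_d$. Two further points your proposal misses: the independent signs $\delta_1,\delta_2\in\{-1,1\}$ are essential, because $x(x-1)$ and $y(y-1)$ need not have the same sign on the region containing $\gamma$, so a single parameter $\lambda$ would not in general make $H = x(x-1)\delta_1(\partial_x F)^2 + y(y-1)\delta_2(\partial_y F)^2$ sign-definite along $\gamma$; and your fallback of multiplying by a polynomial vanishing on all four lines would put the factor $x(x-1)y(y-1)$ into \emph{both} components, which is not the structure of $\mathfrak{X}_{d'}$.
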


We now highlight another main difference when studying the structural stability of polynomial vector fields, rather than its smooth or analytical counterpart. Our definition of \emph{strong} structural stability (recall Definition~\ref{Def00}) resembles the original definition of \emph{robust} vector fields, given by Andronov and Pontrjagin \cite{AndPon1937}. Their definition asks the equivalence homeomorphisms (i.e. the homeomorphism that sends orbits of one vector field to the orbits of the other one) to be in a $\varepsilon$-neighborhood of the identity map, in relation to the $C^0$-topology. One of the first main contributions of Peixoto \cite{Pei1959} was to prove that this condition, for smooth and analytical vector fields, is redundant. That is, it is enough to asks just for an homeomorphisms that sends orbits to orbits, without caring if it is near the identity map or not. However, to prove this redundancy, Peixoto relies heavily on bump-functions and on suitable analytical functions that are zero wherever he wants to. For example, if one have a finite set of limit cycles $\gamma_1,\dots,\gamma_k$, then one can build a suitable analytical map $\varphi$ such that $\varphi$ has suitable properties near $\gamma_1$ and at the same time satisfies $\varphi(\gamma_i)=0$ for $i\geqslant 2$. This allowed Peixoto to work on a specific limit cycle without making disruptive changes on the other ones. However, when dealing with polynomial vector fields of some maximum degree $n$ it is impossible to construct a polynomial perturbation satisfying so many conditions. Therefore, in the polynomial case it is common to work with a definition that resembles the definition of Andronov and Pontrjagin, rather than the definition of Peixoto. For a deeper discussion on this topic, we refer to \cite{DumSha1990}. The extra conditions $(i)$ and $(ii)$ on Definition~\ref{Def00} are fundamental when looking for \emph{necessary} conditions for structurally stability. For example, if we assume that $X\in\mathfrak{X}_d$ is \emph{strong} structurally stable, then we can work on a specific limit cycle $\gamma$ without worrying with disruptive perturbations on the other limit cycles. As a result, when working with Definition~\ref{Def00} rather than Definition~\ref{Def0}, we are able to prove that all singularities must be hyperbolic and that all polycycles must be generic. These are precisely the difference between Theorems~\ref{Main1} and \ref{Main2}, i.e. between $\Sigma_d$ and $\Sigma_d^s$.

The paper is organized as follows. In Section~\ref{Sec3} we state some preliminary results, which will be used through the paper. In Section~\ref{Sec4} and \ref{Sec5} we study the necessary and sufficient conditions for strong structural stability. The main Theorems are proved in Section~\ref{Sec6}.

\section{Preliminary results}\label{Sec3}

\subsection{The Poincar\'e Compactification}\label{sub1}

Let $X=(P,Q)$ be a planar polynomial vector field of degree $n\in\mathbb{N}$. The \emph{Poincar\'e compactified vector field} $p(X)$ is an analytic vector field on $\mathbb{S}^2$ constructed as follows (for more details see \cite{Vel} or Chapter $5$ of \cite{DumLliArt2006}). First we identify $\mathbb{R}^2$ with the plane $(x_1,x_2,1)$ in $\mathbb{R}^3$ and define the \emph{Poincar\'e sphere} as $\mathbb{S}^2=\{y=(y_1,y_2,y_3)\in\mathbb{R}^3:y_1^2+y_2^2+y_3^2=1\}$. We define the \emph{northern hemisphere}, the \emph{southern hemisphere} and the \emph{equator} respectively by $H_+=\{y\in\mathbb{S}^2:y_3>0\}$, $H_-=\{y\in\mathbb{S}^2:y_3<0\}$ and $\mathbb{S}^1=\{y\in\mathbb{S}^2:y_3=0\}$. Consider now the projections $f_\pm:\mathbb{R}^2\rightarrow H_\pm$, given by $f_\pm(x_1,x_2)=\pm \Delta(x_1,x_2)(x_1,x_2,1)$, where $\Delta(x_1,x_2)=(x_1^2+x_2^2+1)^{-\frac{1}{2}}$. These two maps define two copies of $X$, one copy $X^+$ in $H_+$ and one copy $X^-$ in $H_-$. Consider the vector field $X'=X^+\cup X^-$ defined in $\mathbb{S}^2\backslash\mathbb{S}^1$. Note that the \emph{infinity} of $\mathbb{R}^2$ is identified with the equator $\mathbb{S}^1$. The Poincar\'e compactified vector field $p(X)$ is the analytic extension of $X'$ from $\mathbb{S}^2\backslash\mathbb{S}^1$ to $\mathbb{S}^2$ given by $y_3^{n-1}X'$. The \emph{Poincar\'e disk} $\mathbb{D}$ is the projection of the closed northern hemisphere to $y_3=0$ under $(y_1,y_2,y_3)\mapsto(y_1,y_2)$ (the vector field given by this projection will also be denoted by $p(X)$). Note that to know the behavior $p(X)$ near $\mathbb{S}^1$ is the same than to know the behavior of $X$ near the infinity. We define the local charts of $\mathbb{S}^2$ by $U_i=\{y\in\mathbb{S}^2:y_i>0\}$ and $V_i=\{y\in\mathbb{S}^2:y_i<0\}$ for $i\in\{1,2,3\}$. In these charts we define $\phi_i:U_i\rightarrow\mathbb{R}^2$ and $\psi_i:V_i\rightarrow\mathbb{R}^2$ by $\phi_i(y_1,y_2,y_3)=-\psi_i(y_1,y_2,y_3)=(y_m/y_i,y_n/y_i)$, where $m\neq i$, $n\neq i$ and $m<n$. Denoting by $(u,v)$ the image of $\phi_i$ and $\psi_i$ in every chart (therefore $(u,v)$ will play different roles in each chart), one can see the following expression for $p(X)$ at chart $U_1$ is given by,
	\[\dot u = v^n \left[Q\left(\frac{1}{v},\frac{u}{v}\right)-uP\left(\frac{1}{v},\frac{u}{v}\right)\right], \quad \dot v = -v^{n+1}P\left(\frac{1}{v},\frac{u}{v}\right),\]
and at chart $U_2$ it is given by,
	\[\dot u = v^n \left[P\left(\frac{u}{v},\frac{1}{v}\right)-uQ\left(\frac{u}{v},\frac{1}{v}\right)\right], \quad \dot v = -v^{n+1}Q\left(\frac{u}{v},\frac{1}{v}\right).\]
The expressions of $p(X)$ in $V_1$ and $V_2$ is the same as that for $U_1$ and $U_2$, except by a multiplicative factor of $(-1)^{n-1}$. In these coordinates for $i\in\{1,2\}$, $v=0$ represents the points of $\mathbb{S}^1$ and thus the infinity of $\mathbb{R}^2$. Note that $\mathbb{S}^1$ is invariant under the flow of $p(X)$. 

\subsection{Singularities}\label{sub2}

Let $X=(P,Q)$ be a planar polynomial vector field. We say $p\in\mathbb{R}^{2}$ is a \emph{singularity} of $X$ if $P(p)=Q(p)=0$. The Jacobian matrix of $X$ at $p$ is given by,
\begin{equation}\label{4}
	DX(p)=\left(\begin{array}{cc}
		\displaystyle \frac{\partial P}{\partial x}(p) & \displaystyle \frac{\partial P}{\partial y}(p) \vspace{0.2cm} \\ 
		\displaystyle \frac{\partial Q}{\partial x}(p) & \displaystyle \frac{\partial Q}{\partial y}(p)
	\end{array}\right).
\end{equation}
Let $\det DX(p)$ and $T(DX(p))$ be the \emph{determinant} and the \emph{trace} of $DX(p)$. Let $\lambda_1$, $\lambda_2\in\mathbb{C}$ be the eigenvalues of $DX(p)$.
\begin{enumerate}[label=(\alph*)]
	\item If $\det DX(p)\neq0$, we say that $p$ is \emph{simple};
	\item If $\lambda_1$ and $\lambda_2$ have real part different from zero, we say that $p$ is \emph{hyperbolic}. Observe that if $p$ is hyperbolic, then $p$ is simple. We distinguish the following cases.
	\begin{enumerate}[label=(\roman*)]
		\item If $\det DX(p)<0$, then $p$ is saddle.
		\item If $\det DX(p)>0$ and $T(DX(p))>0$, then $p$ is an unstable focus/node.
		\item If $\det DX(p)>0$ and $T(DX(p))<0$, then $p$ is a stable focus/node.
	\end{enumerate}
	\item If $\det DX(p)>0$ and $T(DX(p))=0$, we say that $p$ is \emph{degenerated and monodromic}. In special, $p$ is topologically equivalent to a focus or to a center.
\end{enumerate}
For more details, see Chapter $2$ of \cite{DumLliArt2006} or \cite{Perko2001}.

\subsection{Separatrices, sources and sinks}\label{sub3}

Let $X$ be a planar polynomial vector field and let $p(X)$ be its Poincar\'e compactification. An orbit $\gamma\subset\mathbb{S}^2$ is a sepatrix of $p(X)$ if one of the following statements hold.
\begin{enumerate}[label=(\alph*)]
	\item $\gamma$ is a singularity of $p(X)$;
	\item $\gamma$ is a limit cycle of $p(X)$;
	\item $\gamma\subset\mathbb{S}^1$, where $\mathbb{S}^1$ denotes the equator of $\mathbb{S}^2$;
	\item $\gamma$ is in the boundary of a hyperbolic sector of $p(X)$.
\end{enumerate}
For a detailed definition of \emph{hyperbolic sector} we refer to \cite[p. $18$]{DumLliArt2006}. The set $S\subset\mathbb{S}^2$ of all the separatrices of $p(X)$ is closed and the connected components of $\mathbb{S}^2\backslash S$ are the \emph{canonical regions}. When restricted to a canonical region, the flow of $p(X)$ is topologically equivalent to one of the following systems.
\begin{enumerate}[label=(\roman*)]
	\item $\mathbb{R}^2$ with $\dot y=0$;
	\item $\mathbb{R}^2\backslash\{0\}$ with (in the polar coordinates) $\dot r = 0$ and $\dot\theta=1$;
	\item $\mathbb{R}^2\backslash\{0\}$ with $\dot r=1$ and $\dot\theta=0$.
\end{enumerate}
Such regions are called \emph{strip}, \emph{annular} and \emph{radial} regions. Given an orbit $\gamma\subset\mathbb{S}^2$, we define the $\alpha$ and $\omega$-limits of $\gamma$ by
	\[\alpha(\gamma)=\{q\in\mathbb{S}^2\colon\exists t_n\to-\infty; \gamma(t_n)\to q\}, \quad \omega(\gamma)=\{q\in\mathbb{S}^2\colon\exists t_n\to+\infty; \gamma(t_n)\to q\}.\]
	
\begin{proposition}[Property~$1$, \cite{PeiPei1959}]\label{Prop0}
	Let $D$ be a canonical region of $p(X)$. If $\gamma$, $\xi\subset D$ are orbits of $p(X)$, then $\alpha(\gamma)=\alpha(\xi)$ and $\omega(\gamma)=\omega(\xi)$. 
\end{proposition}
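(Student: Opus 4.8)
The plan is to reduce the statement to the three normal forms for the restricted flow and to analyse each of them. Since $D$ is a canonical region, the flow of $p(X)$ on $D$ is topologically equivalent to a strip, an annular, or a radial flow, and I fix a homeomorphism $h\colon D\to M$ carrying $p(X)|_D$ to the corresponding model $M\in\{\mathbb{R}^2,\mathbb{R}^2\setminus\{0\}\}$. The first reduction is to locate the limit sets. Because $D$ is open and invariant, $\omega(\gamma)\subset\overline{D}$; and because $D$ is a connected component of the open set $\mathbb{S}^2\setminus S$ (recall $S$ is closed), maximality of connected components gives $\partial D\subset S$. In the strip and radial models every forward semiorbit eventually leaves each compact subset of $M$, so, transporting through $h$, each forward orbit in $D$ leaves every compact subset of $D$; hence $\omega(\gamma)\cap D=\varnothing$ and $\omega(\gamma)\subset\partial D\subset S$, and symmetrically for $\alpha(\gamma)$. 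In the annular model every orbit is closed, so $\alpha(\gamma)=\omega(\gamma)=\gamma$ and the assertion is checked directly; thus the substance lies in the strip and radial cases.

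For those cases I would use a global cross-section. In $M$ the set $\{0\}\times I$ is transversal and meets each orbit exactly once, so $\Sigma=h^{-1}(\{0\}\times I)\subset D$ is an arc transversal to $p(X)$ meeting every orbit of $D$ exactly once; write $\gamma_s$ for the orbit through the point of $\Sigma$ of parameter $s\in I$, where $I$ is an interval. The goal becomes to show that the map $\Phi\colon I\to\mathcal{K}(\mathbb{S}^2)$, $\Phi(s)=\omega(\gamma_s)$, into the compact subsets of the compact space $\mathbb{S}^2$ equipped with the Hausdorff metric, is constant; since $I$ is connected it suffices that $\Phi$ be locally constant. One inclusion is free: upper semicontinuity of the $\omega$-limit on a compact phase space gives, for every neighbourhood $U$ of $\Phi(s_0)$, that $\Phi(s)\subset U$ once $|s-s_0|$ is small. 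The reverse inclusion is where the parallel structure of the canonical region must be used: given $p\in\omega(\gamma_{s_0})$ and times $t_n\to+\infty$ with $\gamma_{s_0}(t_n)\to p$, one slides the transversal subarc of $\Sigma$ joining $\gamma_{s_0}$ to $\gamma_s$ forward along the flow; in a parallel region these forward images remain transversal arcs joining the two orbits and cannot be separated from $p$ by a separatrix (there are none inside $D$), forcing $\gamma_s$ to accumulate at $p$ as well, so $p\in\omega(\gamma_s)$.

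The main obstacle is precisely this reverse inclusion, because the conjugacy $h$ does \emph{not} extend continuously to $\overline{D}$; hence one cannot transport the convergence $\gamma_{s_0}(t_n)\to p$ across orbits by uniform continuity, and the argument must be carried out intrinsically on $\mathbb{S}^2$ rather than in the model near $\partial D$. I would implement this by showing that $A=\{s\in I:\Phi(s)=\Phi(s_0)\}$ is both open and closed: upper semicontinuity controls $\Phi(s)$ from above, while the flow-box/sliding-transversal argument of the previous paragraph supplies the matching lower bound, using that $\omega(\gamma_s)$ is a nonempty compact connected invariant subset of $S$ which by the Poincar\'e--Bendixson theory is a singularity, a closed orbit, or a polycycle, and that no such set can change without a separatrix passing through the interior of $D$. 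Connectedness of $I$ then makes $\Phi$ constant, which is exactly $\omega(\gamma)=\omega(\xi)$ for all orbits $\gamma,\xi\subset D$. Applying the identical argument to the reversed field $-p(X)$, whose separatrices and canonical regions coincide with those of $p(X)$, yields $\alpha(\gamma)=\alpha(\xi)$ and completes the proof.
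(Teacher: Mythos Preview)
The paper does not supply its own proof of this proposition; it is merely quoted as Property~1 from \cite{PeiPei1959}, so there is no argument in the paper to compare against directly.

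Your handling of the annular case is wrong. You write that since every orbit is closed, $\alpha(\gamma)=\omega(\gamma)=\gamma$ and ``the assertion is checked directly'', but this is the opposite of what is required: for distinct orbits $\gamma\neq\xi$ in an annular region one has $\alpha(\gamma)=\gamma\neq\xi=\alpha(\xi)$, so the proposition as literally stated \emph{fails} on annular canonical regions. Property~1 in \cite{PeiPei1959} is stated in the structurally stable setting, where a band of periodic orbits is excluded and the annular type does not arise; likewise the present paper only invokes the proposition for $X\in\mathcal{P}_d$, where condition~$(b')$ rules out annular regions. You should either impose that hypothesis explicitly or note that the annular case is vacuous in the intended context, rather than claim it is verified.

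For the strip and radial cases your plan is sound, and the governing idea---that a jump in $\omega$-limit along the one-parameter family $\gamma_s$ would force some intermediate orbit of $D$ to have a saddle in its $\omega$-limit and hence to be a separatrix, contradicting $D\cap S=\varnothing$---is the correct mechanism. The reverse inclusion (your ``sliding transversal'' step) is where the substance lies, and your sketch remains informal: the forward images of the transversal subarc may stretch without bound as $t\to+\infty$, so one still needs a genuine argument (a flow-box near $\omega(\gamma_{s_0})$, or a Poincar\'e--Bendixson analysis of the separatrix configuration on $\partial D$) to force $\gamma_s$ into the same basin. With that gap filled, the strategy matches the classical proof.
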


Given a canonical region $D$ of $p(X)$ and an orbit $\gamma\subset D$, it follows from Proposition~\ref{Prop0} that we can define $\alpha(D)=\alpha(\gamma)$ and $\omega(D)=\omega(\gamma)$. In special, $\alpha(D)$ and $\omega(D)$ are the \emph{source} and \emph{sink} of $D$. For more details about separatrices, see \cites{Mar1954,Neu1975,BueLop2018}. 

\subsection{Index of singularities and closed curves}\label{sub4}

Let $X$ be a planar polynomial vector field and $S\subset\mathbb{R}^2$ a simple closed curve such that $X$ has no singularities in $S$. The \emph{index of $X$ relative to $S$}, denoted by $i(X,S)$, is the number of revolutions, taking in to account their orientation, that $X$ makes when moving along $S$.

\begin{proposition}[Proposition~$6.15$ of \cite{DumLliArt2006}]\label{P1}
	Let $X$ and $Y$ be planar polynomial vector fields. Let $S\subset\mathbb{R}^2$ be a simple closed curve such that $X$ and $Y$ have no singularities on $S$. If $X(s)$ and $Y(s)$ never have opposite direction at any $s\in S$, then $i(X,S)=i(Y,S)$.	
\end{proposition}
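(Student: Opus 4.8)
The plan is to realize $i(X,S)$ as the winding number (equivalently, the topological degree) of the normalized map $s\mapsto X(s)/\|X(s)\|$, regarded as a map from the simple closed curve $S$ to the unit circle, and then to connect $X$ to $Y$ through a family of vector fields having no zero on $S$. Because the winding number is invariant under homotopies through nowhere-vanishing fields, this will force $i(X,S)=i(Y,S)$.

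First I would note that $i(X,S)$ and $i(Y,S)$ are well defined precisely because neither $X$ nor $Y$ vanishes on $S$, so the two normalized Gauss maps are continuous on $S$ and have integer degree. I would then introduce the convex combination
\[
Z_t(s)=(1-t)\,X(s)+t\,Y(s), \qquad t\in[0,1],\ s\in S,
\]
which is continuous in $(t,s)$ and interpolates between $Z_0=X$ and $Z_1=Y$.

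The key step, which I expect to carry the entire content of the statement, is to check that $Z_t(s)\neq0$ for all $t\in[0,1]$ and all $s\in S$. If $Z_t(s)=0$, then $(1-t)X(s)=-t\,Y(s)$. The cases $t=0$ and $t=1$ give $X(s)=0$ and $Y(s)=0$ respectively, both excluded by hypothesis. For $0<t<1$ both coefficients are strictly positive, so $X(s)=-\frac{t}{1-t}\,Y(s)$ exhibits $X(s)$ and $Y(s)$ as pointing in opposite directions at $s$, contradicting the assumption that they never do. Hence each $Z_t$ is a nowhere-vanishing field on $S$, and this is exactly where the ``never opposite direction'' hypothesis enters.

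Finally I would invoke homotopy invariance of the winding number. Since $Z_t(s)\neq0$ everywhere, the normalized maps $s\mapsto Z_t(s)/\|Z_t(s)\|$ depend continuously on $t$, so $t\mapsto i(Z_t,S)$ is a continuous integer-valued function on $[0,1]$ and therefore constant. Evaluating at the endpoints yields $i(X,S)=i(Z_0,S)=i(Z_1,S)=i(Y,S)$, as claimed. The only nontrivial point is the non-vanishing verification above; beyond it, the argument rests solely on the standard fact that the degree of a continuous self-map of the circle is a homotopy invariant, which I would cite rather than reprove.
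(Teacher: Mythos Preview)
Your argument is correct and is precisely the standard linear-homotopy proof of this classical fact: the convex combination $Z_t=(1-t)X+tY$ never vanishes on $S$ exactly because $X$ and $Y$ are never oppositely directed there, and the winding number is a homotopy invariant through nowhere-vanishing fields.

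The paper itself does not supply a proof of this proposition; it is quoted as Proposition~6.15 of \cite{DumLliArt2006} and used as a black box in the proof of Lemma~\ref{Lemma2}. The argument in that reference is essentially the one you gave, so there is nothing to compare.
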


\begin{proposition}[Proposition~$6.16$ of \cite{DumLliArt2006}]\label{P2}
	Let $X$ be planar polynomial vector field. Let $S\subset\mathbb{R}^2$ be a simple closed curve such that $X$ has no singularities on $S$. If $i(X,S)\neq0$, then $X$ has at least one singularity in the bounded region limited by $S$.	
\end{proposition}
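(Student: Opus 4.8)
The plan is to prove the contrapositive: assuming $X$ has no singularity in the closed bounded region $\overline{\Omega}$ enclosed by $S$, I would show that $i(X,S)=0$. By the Jordan curve theorem, the bounded component $\Omega$ of $\mathbb{R}^2\setminus S$ together with $S$ forms a set $\overline{\Omega}$ homeomorphic to the closed unit disk; in particular $\overline{\Omega}$ is simply connected. Since by hypothesis $X$ vanishes nowhere on $\overline{\Omega}$, the normalized direction field $\widehat{X}=X/||X||\colon\overline{\Omega}\to\mathbb{S}^1$ is a well-defined continuous map.

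Next I would recall that, by definition, $i(X,S)$ is the number of revolutions of $X$ along $S$, i.e. the degree (winding number) of the composition $\widehat{X}\circ\gamma\colon\mathbb{S}^1\to\mathbb{S}^1$, where $\gamma\colon\mathbb{S}^1\to S$ is a positively oriented parametrization of $S$. The key point is the homotopy invariance of the degree of circle maps: if $\gamma$ is deformed through loops contained in the zero-free set $\overline{\Omega}$, then $\widehat{X}\circ\gamma$ is correspondingly deformed through loops in $\mathbb{S}^1$, and since the degree of a map $\mathbb{S}^1\to\mathbb{S}^1$ is a homotopy invariant, the resulting index stays constant along the deformation.

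Then I would exploit simple connectedness. Because $\overline{\Omega}$ is homeomorphic to a disk, the loop $\gamma$ is null-homotopic inside $\overline{\Omega}$: there is a continuous homotopy $H\colon[0,1]\times\mathbb{S}^1\to\overline{\Omega}$ with $H(0,\cdot)=\gamma$ and $H(1,\cdot)\equiv p_0$ for some interior point $p_0\in\Omega$. Applying $\widehat{X}$ yields a homotopy in $\mathbb{S}^1$ from $\widehat{X}\circ\gamma$ to the constant map $s\mapsto\widehat{X}(p_0)$, whose degree is $0$. By homotopy invariance, $i(X,S)=0$, contradicting $i(X,S)\neq0$; hence a singularity must lie in $\Omega$.

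The main obstacle is making the passage from the informal ``number of revolutions'' to the degree of the Gauss map $\widehat{X}\circ\gamma$ fully rigorous, and in particular justifying that this degree is invariant under homotopies of the \emph{curve} through the singularity-free set (rather than only under the curve-fixed, field-varying comparison furnished by Proposition~\ref{P1}). Once the index is identified with the degree of $\widehat{X}\circ\gamma$, the homotopy invariance of the degree and the simple connectedness guaranteed by the Jordan curve theorem do the rest; alternatively, one can avoid contracting all the way to a point by first shrinking $S$ to a very small circle around an interior point $p_0$ and then invoking Proposition~\ref{P1}, comparing $X$ on that circle with the constant field $X(p_0)$, which are non-antipodal on a sufficiently small circle, to conclude directly that the index there is $0$.
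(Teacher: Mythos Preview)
Your argument is correct and is essentially the classical proof of this fact: contrapose, normalize the field on the singularity-free region bounded by $S$, use simple connectedness (Jordan--Sch\"onflies) to contract $S$ to a point, and invoke homotopy invariance of the degree to conclude the index vanishes. The alternative you sketch at the end, shrinking $S$ to a small circle and comparing with a constant field via Proposition~\ref{P1}, is also sound.

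However, there is nothing to compare your proof against: the paper does not supply its own proof of this proposition. It is quoted verbatim as Proposition~6.16 of \cite{DumLliArt2006} and used as a black box, so the ``paper's own proof'' does not exist within this manuscript. Your writeup would serve perfectly well as a self-contained justification, with the only caveat being the one you already flagged---making precise the identification of $i(X,S)$ with the degree of the Gauss map $\widehat{X}\circ\gamma$, which is exactly how Dumortier--Llibre--Art\'es set things up in Chapter~6.
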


\begin{remark}\label{Remark0}
	Observe that if $\gamma$ is a periodic orbit of $X$, then $i(X,\gamma)=1$. Hence, it follows from Proposition~\ref{P2} that $X$ has at least one singularity in the bounded region limited by $\gamma$.
\end{remark}

Let $p$ be an isolated singularity of $X$. Let $e$ and $h$ be the number of elliptical and hyperbolic sectors of $p$, respectively. The \emph{Poincar\'e index} of $p$ is given by
	\[i(X,p)=\frac{e-h}{2}+1.\]
It is known that $i(X,p)\in\mathbb{Z}$. See Proposition~$6.32$ of \cite{DumLliArt2006}. For a detailed definition of \emph{elliptical sector}, we refer to \cite[p. $18$]{DumLliArt2006}.

\begin{proposition}[Proposition~$6.26$ of \cite{DumLliArt2006}]\label{P3}
	Let $X$ be a planar polynomial vector field and $S\subset\mathbb{R}^2$ be a simple closed curve such that $X$ has no singularities in $S$. Suppose that $X$ has at most a finite number of singularities $p_1,\dots,p_k$ in the bounded region limited by $S$. Then,
		\[i(X,S)=\sum_{j=1}^{k}i(X,p_j).\]
\end{proposition}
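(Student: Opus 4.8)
The plan is to reduce the global statement to the local one by the classical additivity of the winding number, using the contrapositive of Proposition~\ref{P2} as the key engine. First I would isolate the singularities: since $p_1,\dots,p_k$ are finitely many and hence isolated, I can choose pairwise disjoint closed disks $D_1,\dots,D_k$ contained in the bounded region limited by $S$, with $p_j\in\operatorname{int}(D_j)$ and such that each $D_j$ contains no singularity other than $p_j$ and meets neither $S$ nor the other disks. Writing $S_j=\partial D_j$ with the positive (counterclockwise) orientation, the defining property of the Poincar\'e index of an isolated singularity --- namely that $i(X,p_j)$ is the winding number of $X$ along a small positively oriented simple closed curve enclosing $p_j$ and no other singularity, the sector formula $i(X,p_j)=\tfrac{e-h}{2}+1$ recalled above being the equivalent combinatorial expression --- yields $i(X,S_j)=i(X,p_j)$. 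Thus it suffices to prove the additivity identity $i(X,S)=\sum_{j=1}^{k}i(X,S_j)$.

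Next I would realize this additivity through a cutting construction. Consider the multiply connected region $R=\overline{\operatorname{int}(S)}\setminus\bigcup_{j}\operatorname{int}(D_j)$, on which $X$ has no singularity by construction. I would join the outer boundary $S$ to each inner boundary $S_j$ by thin, pairwise disjoint corridors lying in $R$, thereby turning $R$ into a simply connected region whose boundary is a single simple closed curve $\Gamma$. The bounded region limited by $\Gamma$ is an arbitrarily small modification of $R$ and therefore contains no singularity of $X$; since $X$ also has no singularity on $\Gamma$, the contrapositive of Proposition~\ref{P2} gives $i(X,\Gamma)=0$.

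Finally I would compute $i(X,\Gamma)$ by decomposing the total variation of $\arg X$ along $\Gamma$. The curve $\Gamma$ traverses essentially all of $S$ counterclockwise and each $S_j$ clockwise, so that $R$ stays to the left, the pieces being linked by the corridors. As the corridor widths tend to zero, each corridor is traversed twice along nearly coincident paths in opposite directions, so by continuity of $\arg X$ on the singularity-free set $R$ these contributions cancel, while the arcs deleted at the corridor mouths contribute nothing in the limit. Dividing the surviving variation by $2\pi$ gives $i(X,\Gamma)=i(X,S)-\sum_{j=1}^{k}i(X,S_j)$, whence $i(X,S)=\sum_{j}i(X,S_j)=\sum_{j}i(X,p_j)$, as claimed.

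I expect the main obstacle to be the rigorous justification of this last cancellation, that is, proving that the winding number is genuinely additive over $\Gamma$ and that the corridor contributions vanish in the limit; this is where continuity of the normalized direction field $X/\|X\|$ on the compact singularity-free set $R$ must be used carefully, together with uniform control of $\arg X$ along the two edges of each corridor. A secondary technical point is the purely geometric verification that the corridors can be chosen disjoint and inside $R$ so that $\Gamma$ is a genuine simple closed curve bounding $R$; this is routine but should be stated explicitly.
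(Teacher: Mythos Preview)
The paper does not give its own proof of this proposition: it is stated in the preliminaries (Section~\ref{sub4}) as a quotation from \cite[Proposition~6.26]{DumLliArt2006}, with no argument supplied. So there is nothing in the paper to compare your proposal against.

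That said, your plan is the standard one and is essentially how the result is proved in the cited reference: isolate the singularities in small disks, cut the multiply connected region into a simply connected one via corridors, use that the index along the resulting boundary is zero (your appeal to the contrapositive of Proposition~\ref{P2} is correct here), and observe that the corridor contributions cancel. Your identification of the delicate points --- the cancellation along the corridors and the geometric realizability of the cuts --- is accurate; both are routine once one works with the continuous angle function of $X/\|X\|$ on the compact singularity-free region $R$. One small caveat: you rely on the equivalence between the sector formula $i(X,p)=\tfrac{e-h}{2}+1$ and the winding-number definition of the index at an isolated singularity; in the paper this equivalence is not proved either but is likewise taken from \cite{DumLliArt2006}, so your use of it is consistent with the paper's framework.
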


For more details on the Index Theory, see Chapter~$6$ of \cite{DumLliArt2006}. 

\subsection{Whitney's Stratification}\label{sub5}

Let $Z\subset\mathbb{R}^n$ be a closed set. An analytical \emph{Stratification} of $Z$ is a filtration of $Z$ by closed sets
	\[Z=Z_d\supset Z_{d-1}\supset\dots\supset Z_1\supset Z_0,\]
such that $Z_i\backslash Z_{i-1}$ is either empty or an analytical manifold of dimension $i$. Each connected component of $Z_i\backslash Z_{i-1}$ is called a \emph{stratum} of dimension $i$. Thus, $Z$ is the disjoint union of the strata. An analytical \emph{Whitney Stratification} is, among other things, a locally finite analytical stratification. That is, given $p\in Z$ there is a neighborhood $U\subset\mathbb{R}^n$ of $p$ such that at most a finite number of strata intersects $U$. A set $Z\subset\mathbb{R}^n$ is \emph{analytic} if there are a finite number of analytical functions $f_1,\dots,f_k\colon\mathbb{R}^n\to\mathbb{R}$, such that
	\[Z=\{x\in\mathbb{R}^n\colon f_1(x)=\dots=f_k(x)=0\}.\]

\begin{theorem}[Theorem~$1.2.10$ of \cite{Trot}]\label{T5}
	Every analytic subset of $\mathbb{R}^n$ admits an analytical Whitney Stratification.
\end{theorem}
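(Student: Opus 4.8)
The plan is to argue by induction on the dimension $d=\dim Z$, building the filtration from the top stratum downward and absorbing all ``bad'' points into analytic subsets of strictly smaller dimension at each stage. The engine of the induction is the local structure theory of real analytic sets: at each point the germ of $Z$ has a well-defined dimension, the regular locus $Z_{\mathrm{reg}}$ (the points near which $Z$ is an analytic submanifold of locally maximal dimension) is open and dense in $Z$, and the singular locus $\Sigma=Z\setminus Z_{\mathrm{reg}}$ is again an analytic subset of $\mathbb{R}^n$ with $\dim\Sigma<d$. It is this last fact that makes the induction close, since $\Sigma$ is an analytic set of strictly smaller dimension to which the inductive hypothesis applies.

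For the base case $d=0$ the set $Z$ is discrete, hence a $0$-dimensional analytic manifold, and the trivial filtration $Z=Z_0$ works. For the inductive step I would decompose $Z_{\mathrm{reg}}$ into its connected components, which are analytic manifolds, and take those of dimension exactly $d$ as the top stratum. Applying the inductive hypothesis to $\Sigma$ yields a filtration $\Sigma=\Sigma_{d-1}\supset\cdots\supset\Sigma_0$; setting $Z_i=\Sigma_i$ for $i<d$ and $Z_d=Z$ then gives a filtration of $Z$ by analytic manifolds of the correct dimensions. Local finiteness, the one requirement made explicit in the definition, is automatic, since near any point an analytic set admits a finite decomposition into irreducible components and hence only finitely many strata can meet a sufficiently small neighborhood.

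The substantive content, and the main obstacle, is to upgrade this filtration by manifolds to a genuine Whitney stratification, that is, to force the frontier condition and Whitney's regularity conditions (a) and (b). The guiding principle is that both defects are rare. The frontier of an analytic stratum is again analytic of lower dimension, so frontiers are naturally swept into the inductively constructed lower skeleton. For the regularity conditions, the decisive input is the theorem of Whitney (in the analytic formulation obtained through {\L}ojasiewicz's work on subanalytic sets) that, for a pair of strata $M$ and $N$ with $N\subset\overline{M}$, the set of points of $N$ at which condition (b) fails with respect to $M$ is a nowhere dense subanalytic subset of $N$. One then enlarges the lower skeleton by adjoining these failure loci; each adjunction produces a subanalytic set of strictly smaller dimension, so the refinement terminates by the dimension induction and the resulting strata satisfy the Whitney conditions. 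The delicate point to verify is exactly this nowhere-denseness of the failure locus: establishing that the Whitney conditions are cut out by subanalytic constraints, and hence fail only on a lower-dimensional set, is where the real analytic geometry does all the work. Granting it, the inductive refinement assembles the desired analytical Whitney stratification.
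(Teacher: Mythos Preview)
The paper does not prove this theorem. It is quoted verbatim as Theorem~1.2.10 of Trotman's survey \cite{Trot} and used as a black box: the only place it appears is in the proof of Lemma~\ref{Lemma6}, where it is invoked to conclude that the zero set of an analytic function has Lebesgue measure zero. There is therefore no in-paper proof for your proposal to be compared against.

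As for the proposal on its own terms: the strategy you outline---induction on dimension, peeling off the regular locus as the top stratum, pushing the singular locus and the Whitney-failure loci into a lower-dimensional skeleton, and iterating---is indeed the classical route, going back to Whitney and {\L}ojasiewicz. You correctly flag the one substantial ingredient you are taking on faith: that for a pair of strata $N\subset\overline{M}$ the set of points of $N$ where Whitney~(b) fails is nowhere dense (in fact lower-dimensional and subanalytic). That is a genuine theorem with real content, and without it the refinement process has no reason to terminate. One further point worth noting: since the statement asks for an \emph{analytical} stratification, you need the bad loci you adjoin at each stage to be contained in \emph{analytic} (not merely subanalytic) sets of lower dimension, so that the skeleta $Z_i$ remain closed analytic sets; this takes a little extra care in the real-analytic category. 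With those two inputs granted, your sketch is the standard argument, but it remains a sketch rather than a proof.
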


Let $f\colon\mathbb{R}^n\to\mathbb{R}$ be an analytical non-constant function. If $0\in\mathbb{R}$ is a regular value of $f$, then it follows from the Implicit Function Theorem that $f^{-1}(0)\subset\mathbb{R}^n$ is a analytical manifold of codimension $1$. Hence, Theorem~\ref{T5} is stating that if $0$ is not a regular value of $f$, then $f^{-1}(0)$ is yet endowed with some regularity. More precisely, in this case it follows from Theorem~\ref{T5} that
	\[f^{-1}(0)=B_1\cup B_2\cup\dots\cup B_n,\]
where the union is disjoint and $B_i$ is an analytical manifold of codimension $i$. Moreover, if we are interested in a particular point $p\in f^{-1}(0)$, then it follows from the locally finite property that we can restrict the domain of $f$ to a neighborhood of $p$ and thus assume that each $B_i$ has at most a finite number of connected components. In special, we conclude that $f^{-1}(0)$ has zero Lebesgue measure on that neighborhood.

\subsection{Non-hyperbolic limit cycles and algebraic limit cycles}\label{sub6}

Let $X=(P,Q)\in\mathfrak{X}_d$ and let $\gamma$ be a limit cycle of $X$ with period $T>0$ and parametrization $\gamma(t)$. Let also,
	\[r(\gamma)=\int_{0}^{T}\frac{\partial P}{\partial x}(\gamma(t))+\frac{\partial Q}{\partial y}(\gamma(t))\;dt.\]
It is well known (see Theorem~$2$, p. $216$ of \cite{Perko2001}) that $\gamma$ is hyperbolic if, and only if, $r(\gamma)\neq0$. Moreover, if $r(\gamma)<0$ (resp. $r(\gamma)>0$), then $\gamma$ is stable (resp. unstable). Let $X\in\mathfrak{X}_d$ be given by $X=(P,Q)$. Let $F\colon\mathbb{R}^2\to\mathbb{R}$ be a polynomial. We say that $F$ is an \emph{invariant algebraic curve} for $X$ if
\begin{equation}\label{24}
	P(x,y)\frac{\partial F}{\partial x}(x,y)+Q(x,y)\frac{\partial F}{\partial y}(x,y)=K(x,y)F(x,y),
\end{equation}
for some polynomial $K\colon\mathbb{R}^2\to\mathbb{R}$. In special, observe that the set $F^{-1}(0)$ is invariant by the flow of $X$. If $\gamma$ is limit cycle of $X$ such that $\gamma\subset F^{-1}(0)$, then $\gamma$ is an \emph{algebraic limit cycle} of $X$.  

\section{Necessary conditions for strong structural stability}\label{Sec4}

\begin{lemma}\label{Lemma1}
	Let $X\in\Sigma_d^s$. Then $p(X)$ has a finite number of singularities.
\end{lemma}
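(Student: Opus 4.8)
The plan is to argue by contradiction, reducing the statement to a density fact. Suppose $X\in\Sigma_d^s$ has infinitely many singularities. The first observation is that topological equivalence is extremely rigid on singularities: if $h\colon\mathbb S^2\to\mathbb S^2$ is a topological equivalence between $p(X)$ and $p(Y)$, then $h$ is a homeomorphism that induces a bijection between the orbits of $p(X)$ and those of $p(Y)$, and an orbit is reduced to a single point precisely when it is a singularity; hence $h$ restricts to a bijection $\mathrm{Sing}(p(X))\to\mathrm{Sing}(p(Y))$ and, in particular, $p(X)$ and $p(Y)$ have the same number of singularities. Since $X\in\Sigma_d^s\subset\Sigma_d$, there is a neighborhood $N\subset\mathfrak X_d$ of $X$ all of whose elements are topologically equivalent to $X$. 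Therefore it suffices to exhibit, arbitrarily close to $X$, some $Y\in\mathfrak X_d$ with only finitely many singularities: such a $Y$ would lie in $N$ and contradict the previous sentence. In other words, the lemma reduces to the claim that the set of $Y\in\mathfrak X_d$ for which $p(Y)$ has finitely many singularities is dense in $\mathfrak X_d$.

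To prove this density I would show that the complementary set $B=\{Y\in\mathfrak X_d\colon p(Y)\ \text{has infinitely many singularities}\}$ is contained in a proper algebraic subset of the coefficient space $\mathbb R^{(d+1)(d+2)}$; any such subset is closed with empty interior, hence nowhere dense. Write $Y=(P,Q)$ with $P=x(x-1)f$ and $Q=y(y-1)g$, and split the singularities of $p(Y)$ into the finite ones, $V(P)\cap V(Q)\subset\mathbb R^2$, and those on the equator $\mathbb S^1$. Two bivariate polynomials with no common factor have only finitely many common zeros, and the only irreducible factors that $P$ and $Q$ can share are $x$ or $x-1$ (which would force $g(0,\cdot)\equiv0$ or $g(1,\cdot)\equiv0$), $y$ or $y-1$ (which would force $f(\cdot,0)\equiv0$ or $f(\cdot,1)\equiv0$), or a nonconstant common factor of $f$ and $g$. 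Hence $V(P)\cap V(Q)$ is infinite only if one of the four displayed identities holds — each a system of linear conditions on the coefficients — or if $f$ and $g$ have a nonconstant common factor, which is a Zariski-closed condition (the vanishing of an appropriate resultant). For the equator, recall from Section~\ref{sub1} that $\mathbb S^1$ is invariant and that $p(Y)$ is singular along all of $\mathbb S^1$ exactly when the induced vector field on $\mathbb S^1$ vanishes identically, which happens precisely when $y\,P_n\equiv x\,Q_n$ for the top-degree homogeneous parts $P_n,Q_n$ of $P,Q$; otherwise $\mathbb S^1$ carries only finitely many singularities of $p(Y)$. Thus $B$ lies inside a finite union of algebraic subsets, and this union is proper, since, for instance, $f\equiv g\equiv1$ gives $P=x(x-1)$, $Q=y(y-1)$, which has exactly four singularities in $\mathbb R^2$ and none at infinity. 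This establishes the density and hence the lemma.

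The step I expect to require the most care is the analysis at infinity, because $n=\deg Y$ is itself a function of $Y$ and drops below $d+2$ on a proper subvariety, so the relation $y\,P_n\equiv x\,Q_n$ is not literally a single polynomial condition on the coefficients and must be handled piecewise — for instance by first stratifying the coefficient space according to $\deg Y$ and checking the condition on each stratum, or by sidestepping it entirely: for each configuration that makes $\mathbb S^1$ singular or produces a curve of finite singularities, one writes down an explicit one‑parameter perturbation of $f$ and $g$ inside $\mathfrak X_d$ (keeping $\deg\widetilde f,\deg\widetilde g\le d$) that destroys the offending divisibility, resultant, or top-degree relation. Either route is routine elimination-theory bookkeeping, and beyond it the argument uses nothing more than the elementary bijection-on-singularities remark from the first paragraph.
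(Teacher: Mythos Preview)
Your argument is correct and follows essentially the same route as the paper: both reduce the lemma to the observation that topological equivalence induces a bijection on singularities, and then show that vector fields $Y\in\mathfrak X_d$ with $p(Y)$ having only finitely many singularities are dense, by separately eliminating common factors of $P$ and $Q$ (Bézout/resultant) and degeneracies in the top-degree homogeneous parts governing the equator. The only difference is cosmetic: you package the density as ``the bad set lies in a proper algebraic subset,'' whereas the paper perturbs $f$ and $g$ directly and invokes Bézout's theorem; your caveat about the degree dropping on a subvariety is exactly the kind of bookkeeping the paper sweeps under ``taking $Z$ arbitrarily close to $Y$ if necessary.''
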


\begin{proof} Let $X=(P,Q)$. If $P$ and $Q$ have a common factor, then it is clear that we can take an arbitrarily small perturbation $Y=(R,S)$ of $X$ such that $R$ and $S$ has no common factor. Therefore, it follows from Bezout's Theorem (see \cite{Ful}, p. 57) that $Y$ has finitely many singularities. Let
	\[R=R_0+R_1+\dots+R_{d+2}, \quad S=S_0+S_1+\dots+S_{d+2},\]
where $R_i$ and $S_i$, $i\in\{1,\dots,d+2\}$ are homogeneous polynomials of degree $i$, i.e.
	\[R_i(tx,ty)=t^iR_i(x,y), \quad S_i(tx,ty)=t^iS_i(x,y),\]
for all $t\in\mathbb{R}$. Let $U_1$ and $V_1$ denote the first and second chart of the Poincar\'e Compactification (see Section~\ref{sub1}). Observe (see \cite{DumLliArt2006}, p. 154) that a point $(u_0,0)\in\mathbb{S}^1\cap(U_1\cup V_1)$ is a singularity at infinity of $p(Y)$ if, and only if,
\begin{equation}\label{1}
	S_{d+2}(1,u_0)=u_0R_{d+2}(1,u_0).
\end{equation}
Similarly, a point $(u_0,0)\in\mathbb{S}^1\cap(U_2\cup V_2)$ is a singularity at infinity of $p(Y)$ if, and only if,
\begin{equation}\label{2}
	R_{d+2}(u_0,1)=u_0S_{d+2}(u_0,1).
\end{equation}
Therefore, taking $Z$ arbitrarily close to $Y$ if necessary, we can assume that $Y$ is such that \eqref{1} and \eqref{2} have at most a finite number of solutions. Hence, $p(Y)$ has only a finite number of singularities. Since $Y$ is arbitrarily close to $X\in\Sigma_d^s$, it follows that $p(X)$ and $p(Y)$ are topologically equivalent and thus $p(X)$ has at most a finite number of singularities. \end{proof}

\begin{remark}
	In the context of the proof of Lemma~\ref{Lemma1}, we stress that $X=(P,Q)$ is given by
		\[P(x,y)=x(x-1)f(x,y), \quad Q(x,y)=y(y-1)g(x,y),\]
	with $f$ and $g$ polynomials. Hence, when we take arbitrarily small perturbation of $Y=(R,S)$ of $X$ we mean
		\[R(x,y)=x(x-1)r(x,y), \quad S(x,y)=y(y-1)s(x,y),\]
	where $r$ and $s$ are arbitrarily small perturbations of $f$ and $g$, respectively. We stress this because in our context the perturbations taken through this paper must still lie in $\mathfrak{X}_d$, i.e. we must always take care to ensure that $Y\in\mathfrak{X}_d$. As the reader shall in the next lemma and along this paper, this preoccupation will force us to construct suitable perturbations that ``breaks'' different kind of objects, but without breaking the invariant set $\Lambda$ (recall \eqref{37} and Figure~\ref{Fig15}).
\end{remark}

\begin{lemma}\label{Lemma2}
	Let $X\in\Sigma_d^s$. Then every singularity of $p(X)$ is simple.
\end{lemma}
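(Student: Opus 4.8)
The plan is to argue by contradiction: suppose $X\in\Sigma_d^s$ has a singularity $p$ of $p(X)$ which is not simple, i.e. $\det DX(p)=0$, and construct an arbitrarily small perturbation $Y\in\mathfrak{X}_d$ for which the topological type of the phase portrait near the location of $p$ changes, contradicting strong structural stability. One must split into two cases according to whether $p$ is a finite singularity (lying on $\mathbb{R}^2$) or a singularity at infinity (lying on $\mathbb{S}^1$), since the admissible perturbations are constrained: any $Y=(R,S)\in\mathfrak{X}_d$ must keep the factored form $R=x(x-1)r$, $S=y(y-1)s$, hence the four lines $\{x=0\},\{x=1\},\{y=0\},\{y=1\}$ and the equator $\mathbb{S}^1$ remain invariant. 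So whatever I do to the coefficients of $f,g$, I cannot destroy $\Lambda$. The key observation that makes the finite case tractable is that the invariant octothorpe partitions $\mathbb{R}^2$ into regions, and a non-simple singularity either lies on one of these four lines (where its behaviour is governed by the one-dimensional dynamics along the line together with the transverse eigenvalue) or lies strictly inside one of the nine open regions.

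For a finite singularity $p=(x_0,y_0)$ off the octothorpe, the quantities $x_0(x_0-1)$ and $y_0(y_0-1)$ are nonzero, so $P(p)=Q(p)=0$ forces $f(p)=g(p)=0$, and a direct computation gives $DX(p)=\mathrm{diag}\bigl(x_0(x_0-1)\,\partial_x f(p),\, y_0(y_0-1)\,\partial_y g(p)\bigr)$ plus the off-diagonal terms $x_0(x_0-1)\partial_y f(p)$ and $y_0(y_0-1)\partial_x g(p)$; here $\det DX(p)=0$ means one of these eigen-directions is degenerate. I would perturb $f$ by adding a small constant $\varepsilon$ (staying in $\mathfrak{X}_d$, since constants have degree $0\le d$): this moves the curve $f^{-1}(0)$ and, for generic $\varepsilon$, either annihilates the singularity near $p$ or turns it into finitely many simple ones — in any case the local index or the number of singularities in a small disk around $p$ changes (one can use the Index Theory of Section~\ref{sub4}: a non-simple singularity can be split into simple ones whose indices sum to $i(X,p)$, and one can always choose the perturbation so that the resulting local picture is not topologically equivalent to the original, e.g. by making the number of singularities in a small circle change from one to zero or two). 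If $p$ lies on the octothorpe, say on $\{x=0\}$, then $\partial_x P(p)=-f(0,y_0)$ and the transverse behaviour is controlled by $f(0,y_0)$ while the tangential behaviour along $\{x=0\}$ is governed by $\dot y = y(y-1)g(0,y)$; non-simplicity means either $f(0,y_0)=0$ (tangential contact of nearby orbits with the line, breakable by adding $\varepsilon$ to $f$) or $y_0(y_0-1)\partial_y g(0,y_0)=0$ with $f(0,y_0)\ne 0$ (a degenerate zero of the restricted one-dimensional field $y(y-1)g(0,y)$, breakable by a small perturbation of $g$ that is, say, a constant, moving that one-dimensional zero or splitting it). For a singularity at infinity, I would use the same mechanism as in Lemma~\ref{Lemma1}: equations \eqref{1} and \eqref{2} detect singularities on $\mathbb{S}^1$, and $p$ is non-simple exactly when the corresponding one-variable polynomial $S_{d+2}(1,u)-uR_{d+2}(1,u)$ (or its $U_2$ analogue) has a multiple root at $u_0$, or has a simple root but the transverse ($v$-)derivative vanishes; in either situation a small perturbation of the top-degree part — achieved by perturbing the top coefficients of $f$ and $g$, which is legal in $\mathfrak{X}_d$ — changes the number or type of infinite singularities, again contradicting topological equivalence with $p(X)$.

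The main obstacle I anticipate is \textbf{verifying that the constructed perturbation genuinely changes the topological type}, and not merely the algebraic data: a non-simple singularity can still be topologically a node or saddle (the present lemma does not yet claim hyperbolicity, only simplicity, which is the weaker statement $\det DX(p)\neq 0$), so I must ensure the perturbation changes something topological — and the cleanest invariant to track is the number of singularities inside a fixed small circle $S$ around $p$ together with $i(X,S)$, which by Proposition~\ref{P3} equals the sum of local indices. The delicate point is that when $\det DX(p)=0$ the singularity may be non-isolated (e.g. a whole arc of zeros), in which case one should first reduce to the isolated case by a preliminary perturbation removing common factors (as in Lemma~\ref{Lemma1}) and then by a further perturbation making the singularity isolated; after that one argues that a perturbation exists making the local index jump or the local number of singularities change, using the fact that the defining equations $\{f=g=0\}$ (finite case) or $\{S_{d+2}(1,u)=uR_{d+2}(1,u)\}$ (infinite case) are not structurally stable as algebraic sets when $0$ is a non-regular value — here the Whitney stratification of Section~\ref{sub5} guarantees the degenerate locus has measure zero, so simple $Y$'s are dense, but one still has to exhibit one with a \emph{different} phase portrait rather than just a generic one. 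I expect this topological-change verification, handled case by case via index computations on a small circle, to be the heart of the argument; everything else (staying in $\mathfrak{X}_d$, preserving $\Lambda$, making perturbations small) is routine bookkeeping.
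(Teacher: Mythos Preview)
Your overall architecture is right and matches the paper's: argue by contradiction, split into finite and infinite singularities, keep all perturbations inside $\mathfrak{X}_d$, and use the index on a small circle $S$ around $p$ as the topological invariant. Where you diverge from the paper is in the finite case, and the divergence is exactly at the point you yourself flag as the ``main obstacle''.

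Your proposed perturbation off the octothorpe is to add a small constant to $f$, hoping this changes the \emph{number} of singularities inside $S$. That can work in examples, but you have not shown it always does: if, say, $f=x$ and $g=y^2$ in translated coordinates, adding a constant to $f$ still leaves exactly one non-simple singularity. You would then need to fall back on perturbing $g$, and in general you are headed for a case analysis of the local intersection of $\{f=0\}$ and $\{g=0\}$ that you never carry out. The paper sidesteps this entirely with a sharper idea: after translating $p$ to the origin, perturb $f$ by $\varepsilon a x$ and $g$ by $\varepsilon b y$ (or by constants $\varepsilon a$, $\varepsilon b$ when $p$ is a corner of the octothorpe). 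This perturbation \emph{fixes} the origin as a singularity, so there is no question of tracking which zero goes where. One then computes $\det D\overline{Z}(0,0)$ explicitly and sees that $a,b$ can be chosen to make it positive or negative. Now strong structural stability is used in a precise way you only hint at: the $\varepsilon$-close homeomorphism forces every $Y\in N$ to have a \emph{unique} singularity $p_Y$ in the disk bounded by $S$, hence $i(Y,p_Y)=i(Y,S)=i(X,S)$ is a fixed integer by Propositions~\ref{P1} and \ref{P3}. But the explicit perturbations produce $p_Y$ with index $+1$ and with index $-1$, a contradiction. This is the missing key: instead of trying to change the \emph{count}, you keep the count at one and force the \emph{index} of that one singularity to take two different values.

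For the infinite case your sketch is essentially what the paper does: work in the $U_1$/$U_2$ charts, compute the Jacobian at $(u_0,0)$, and perturb the top-degree coefficients $a_0,b_0$ of $f_d,g_d$ to flip the sign of $\det J$ (same index contradiction), with a separate argument when $u_0=0$ (origin of $U_1$ or $U_2$), where a perturbation of the leading coefficient of $f$ along the corresponding axis manufactures an extra finite singularity bifurcating from infinity.
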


\begin{proof} Suppose by contradiction that $p(X)$ has a non-simple singularity $p$. Suppose first that $p$ is a finite singularity and thus a singularity of $X$. Let $S$ be a small circle centered at $p$ and let $K$ be the bounded component of $\mathbb{R}^2\backslash S$. It follows from Lemma~\ref{Lemma1} that we can choose $S$ small enough such that there are no other singularity of $p(X)$ at $K$. Let $N\subset\mathfrak{X}_d$ be a small neighborhood of $X$. It follows from the \emph{strong} structural stability of $X$ that we can choose $N$ small enough such that every $Y\in N$ has an unique singularity $p_Y$ at $K$. Moreover, taking $N$ sufficiently small, we can also assume that for every $s\in S$, $X(s)$ and $Y(s)$ never have opposite directions. Hence, it follows from Proposition~\ref{P1} that $i(Y,S)=i(X,S)$, for any $Y\in N$. Moreover, it follows from Proposition~\ref{P3} that $i(Y,S)=i(Y,p_Y)$. We claim that there are $Y_1$, $Y_2\in N$ such that $p_{Y_1}$ and $p_{Y_2}$ have Poincar\'e index $+1$ and $-1$, contradicting the fact that $i(Y,S)$ is constant. Indeed, since $X=(P,Q)$ is given by \eqref{0}, it follows that if we translate $p$ to the origin, we obtain a vector field $Z=(R,S)$ given by
	\[R(x,y)=(x+\alpha)(x+\alpha-1)r(x,y), \quad S(x,y)=(y+\beta)(y+\beta-1)s(x,y),\]
for some $\alpha$, $\beta\in\mathbb{R}$, with $r$ and $s$ polynomials. Since the origin is a non-simple singularity of $Z$, it follows that $\det DZ(0,0)=0$. Given $\varepsilon>0$ and $a$, $b\in\mathbb{R}$, consider $\overline{Z}=(\overline{R},\overline{S})$ given by
	\[\overline{R}(x,y)=(x+\alpha)(x+\alpha-1)\bigl(r(x,y)+\varepsilon a x\bigr), \quad \overline{S}(x,y)=(y+\beta)(x+\beta-1)\bigl(s(x,y)+\varepsilon b y\bigr).\]
Observe that the origin is a singularity of $\overline{Z}$ and that,
	\[D\overline{Z}(0,0)=\left(\begin{array}{cc} \displaystyle \frac{\partial R}{\partial x}(0,0)+\alpha(\alpha-1)a\varepsilon & \displaystyle \frac{\partial R}{\partial y}(0,0) \vspace{0.2cm} \\ \displaystyle \frac{\partial S}{\partial x}(0,0) & \displaystyle \frac{\partial S}{\partial y}(0,0)+\beta(\beta-1)b\varepsilon \end{array}\right).\]
Since $\det DZ(0,0)=0$, it follows that
	\[\det D\overline{Z}(0,0)=\varepsilon\left(\alpha(\alpha-1)a\frac{\partial S}{\partial y}(0,0)+\beta(\beta-1)\frac{\partial R}{\partial x}(0,0)+\alpha(\alpha-1)\beta(\beta-1)ab\varepsilon\right).\]
If $\alpha(\alpha-1)\neq0$ or $\beta(\beta-1)\neq0$, then it is clear that we can choose $a$, $b\in\mathbb{R}$ and $\varepsilon>0$ such that $\det D\overline{Z}(0,0)$ is positive or negative. Moreover, taking $\varepsilon>0$ sufficiently small, it also follows that $\overline{Z}$ is arbitrarily close to $Z$. Hence, if we translate the origin back to $p$, we obtain vector fields $Y_1$ and $Y_2$, as desired. If $\alpha(\alpha-1)=0$ and $\beta(\beta-1)=0$, then we define
	\[\overline{R}(x,y)=(x+\alpha)(x+\alpha-1)\bigl(r(x,y)+\varepsilon a \bigr), \quad \overline{S}(x,y)=(y+\beta)(x+\beta-1)\bigl(s(x,y)+\varepsilon b \bigr),\]
and the proof is the same. Suppose now that $p$ is non-finite singularity of $p(X)$. Suppose first that $p$ is a singularity at the chart $U_1$ of $p(X)$. Let
	\[f=f_0+f_1+\dots+f_d, \quad g=g_0+g_1+\dots+g_d,\]
where $f_i$ and $g_i$ are homogeneous polynomials of degree $i$, and observe that the decomposition of $P$ and $Q$ in homogeneous polynomials are given by
	\[P=(-xf_0)+(x^2f_0-xf_1)+\dots+x^2f_d, \quad Q=(-yg_0)+(y^2g_0-yg_1)+\dots+y^2g_d.\]
Calculations (see \cite{DumLliArt2006}, p. 154) show that a point $(u,0)\in \mathbb{S}^1\cap U_1$ is a singularity of $p(X)$ if, and only if, $F(u)=0$, where
\begin{equation}\label{7}
	F(u)=u\bigl(ug_d(1,u)-f_d(1,u)\bigr).
\end{equation}
Moreover, if $p=(u_0,0)$, then the Jacobian matrix of $p(X)$ at $p$ is given by,
	\[J=\left(\begin{array}{cc} F'(u_0) & \star \vspace{0.2cm} \\ 0 & -f_d(1,u_0) \end{array}\right).\]
Observe that,
\begin{equation}\label{8}
	F'(u_0)=u_0g_d(1,u_0)-f_d(1,u_0)+u_0\left(g_d(1,u_0)+u_0\frac{\partial g_d}{\partial y}(1,u_0)-\frac{\partial f_d}{\partial y}(1,u_0)\right).
\end{equation}
Suppose first $u_0=0$. In this case, we have
	\[J=\left(\begin{array}{cc} -f_d(1,0) & \star \vspace{0.2cm} \\ 0 & -f_d(1,0) \end{array}\right).\]
Therefore, if $f_d(x,y)=\sum_{i=0}^{d}a_ix^{d-i}y^{i}$, then $f_d(1,0)=a_0$. Since $p$ is non-simple, it follows that $\det J=0$ and thus $a_0=0$. If we replace $y=0$, then $X=(P,Q)$ is given by,
	\[P(x,0)=x(x-1)f(x,0), \quad Q(x,0)=0.\]
Therefore, if $f(x,y)=\sum_{i,j=0}^{d}a_{ij}x^i y^j$, then 
	\[P(x,0)=x(x-1)(a_{00}+a_{10}x+a_{20}x^2+\dots+a_{d-1,0}x^{d-1}).\]
It follows from Lemma~\ref{Lemma1} that $a_{i,0}\neq0$, for some $i\in\{0,1,\dots,d-1\}$ (otherwise $X$ would have infinitely many singularities). Let $k$ be the greatest index such that $a_{k,0}\neq0$. Without loss of generality, suppose $a_{k,0}>0$. Hence, it follows that there is $x_0>0$ such that $P(x,0)>0$, for all $x>x_0$. Consider now $Y=(R,S)$ given by
	\[R(x,y)=x(x-1)\bigl(f(x,y)+\lambda x^d\bigr), \quad S(x,y)=y(y-1)g(x,y),\]
with $\lambda<0$ such that $|\lambda|$ is small enough. Observe that $S(x,0)=0$. Since $\lambda<0$, it follows that there is $x_1>0$ such that $P(x,0)<0$ for all $x>x_1$. If $|\lambda|>0$ is small enough, then $x_1>x_0$ and therefore there is $x_0<\overline{x}<x_1$ such that $(\overline{x},0)$ is a singularity of $Y$. Such singularity bifurcated from $p$ and thus $Y$ has one singularity more than $X$, contradicting the strong structural stability of $X$. If $u_0\neq0$, then it follows from \eqref{7} and \eqref{8} that
\begin{equation}\label{16}
	F'(u_0)=u_0\left(g_d(1,u_0)+u_0\frac{\partial g_d}{\partial y}(1,u_0)-\frac{\partial f_d}{\partial y}(1,u_0)\right).
\end{equation}
Let,
	\[f_d(x,y)=\sum_{i=0}^{d}a_ix^{d-i}y^{i}, \quad g_d(x,y)=\sum_{i=0}^{d}b_ix^{d-i}y^{i}.\]
If $f_d(1,u_0)=0$, then we take a small perturbation in $a_0$ and thus $f_d(1,u_0)\neq0$. If $F'(u)=0$, then we take a small perturbation in $b_0$. Such perturbation affect $g_d(1,u_0)$ without changing
	\[\frac{\partial g_d}{\partial y}(1,u_0), \quad \frac{\partial f_d}{\partial y}(1,u_0).\]
Thus, it is not difficult to see that we can take an arbitrarily small perturbation on $a_0$ and $b_0$ such that $\det J>0$ and $\det J<0$. Such bifurcations will, similarly to the finite case, lead to a contradiction in the index theory. Finally, if the $p$ is the origin of the chart $U_2$ of $p(X)$, then the proof follows similarly to the case $u_0=0$. \end{proof}

\begin{lemma}\label{Lemma3}
	Let $X\in\Sigma_d^s$. Then no singularity of $p(X)$ is a center.
\end{lemma}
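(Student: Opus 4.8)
The plan is to argue by contradiction: suppose $X\in\Sigma_d^s$ has a singularity $p$ of $p(X)$ that is a center. By Lemma~\ref{Lemma2} we already know $p$ is simple, so $\det DX(p)>0$ and (being a center rather than a focus/node) the trace of $DX(p)$ vanishes, i.e.\ $p$ is the type described in item $(c)$ of Section~\ref{sub2}. The idea is then to produce an arbitrarily small perturbation $Y\in\mathfrak{X}_d$ of $X$ that destroys the center, either by changing the trace at $p$ so that $p$ becomes a hyperbolic focus (a topologically different local picture: a center has a punctured neighbourhood of periodic orbits, a focus does not), or by making a periodic orbit of the center bifurcate into a limit cycle. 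Since strong structural stability would force $p(Y)$ to be topologically equivalent to $p(X)$ with an equivalence close to the identity, either outcome yields a contradiction.

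First I would split into the finite and infinite cases, exactly as in Lemmas~\ref{Lemma1} and \ref{Lemma2}. If $p$ is a finite singularity of $X$, translate $p$ to a point and write, as in the proof of Lemma~\ref{Lemma2},
\[
R(x,y)=(x+\alpha)(x+\alpha-1)\bigl(r(x,y)+\varepsilon a x\bigr),\quad
S(x,y)=(y+\beta)(y+\beta-1)\bigl(s(x,y)+\varepsilon b y\bigr),
\]
so that the new Jacobian at the (still-present) singularity is $DZ(0,0)$ with diagonal entries shifted by $\alpha(\alpha-1)a\varepsilon$ and $\beta(\beta-1)b\varepsilon$ respectively. The trace becomes $T(DZ(0,0))+\varepsilon\bigl(\alpha(\alpha-1)a+\beta(\beta-1)b\bigr)$. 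Because $p$ is not on the lines $x=0$, $x=1$, $y=0$, $y=1$ (a center is monodromic, hence isolated with no separatrices, so it cannot lie on an invariant line through which orbits pass transversally) we may assume $\alpha(\alpha-1)\neq0$ or $\beta(\beta-1)\neq0$, so we can choose $a,b,\varepsilon$ making the trace nonzero while keeping the determinant positive (the determinant depends continuously on $\varepsilon$ and is positive at $\varepsilon=0$). Then $p$ becomes a hyperbolic focus/node for $Y$, which is not topologically equivalent to a center: no neighbourhood of $p$ is filled with closed orbits, whereas for $p(X)$ such a neighbourhood exists. As in Lemma~\ref{Lemma2}, if both $\alpha(\alpha-1)=0$ and $\beta(\beta-1)=0$ one replaces the $\varepsilon a x$, $\varepsilon b y$ terms by constants $\varepsilon a$, $\varepsilon b$ and runs the same computation. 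For the case of a singularity at infinity I would again reuse the charts-$U_1,U_2$ computation from Lemma~\ref{Lemma2}: there the Jacobian $J$ is triangular with diagonal entries $F'(u_0)$ and $-f_d(1,u_0)$, and one perturbs $a_0$, $b_0$ among the leading coefficients of $f_d,g_d$ to move the trace $F'(u_0)-f_d(1,u_0)$ off zero while keeping $\det J=-F'(u_0)f_d(1,u_0)>0$, so the infinite singularity turns into a hyperbolic focus.

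The subtle point — and the step I expect to be the main obstacle — is justifying that a center is topologically distinct from a focus \emph{within the constraints of Definition~\ref{Def00}}, and more precisely that the trace-perturbation above genuinely changes the topological type \emph{of $p(X)$ near $p$}: one must be careful that the degenerate monodromic singularity $p$ of $p(X)$ is actually a center and not a focus, since only for a center does the local phase portrait consist of closed orbits. A cleaner route that sidesteps this is to note the dichotomy: the degenerate monodromic singularity $p$ is either a focus or a center. If it is a focus, it is a (topological) sink or source but is \emph{not hyperbolic}, and one shows structural instability just as one would for a non-hyperbolic singularity — but here $p$ is simple, so instead one argues that an arbitrarily small trace-perturbation can make it either attracting or repelling (change the sign of the trace), and an attracting focus is not topologically equivalent to a repelling one near $p$, contradicting stability. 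If $p$ is a center, then $p$ is accumulated by periodic orbits; take the outermost such periodic orbit $\gamma_0$ bounding the maximal disk of closed orbits (or, if all of a neighbourhood is periodic, any interior one) and apply the same trace-shift perturbation, now on a neighbourhood of $\gamma_0$: the first return map near $\gamma_0$ acquires a nonzero derivative correction, so periodic orbits are destroyed or collapse, changing the cardinality of the set of closed orbits in any fixed neighbourhood of $p$ — again impossible for a member of $\Sigma_d^s$ by condition $(ii)$ of Definition~\ref{Def00}. I would present the focus case first (it is the short trace-sign argument, essentially identical to the index-free part of Lemma~\ref{Lemma2}) and then the center case, and I would lean on the fact, recorded in Section~\ref{sub6}, that the sign of $r(\gamma)=\int_0^T(\partial_xP+\partial_yQ)\,dt$ controls hyperbolicity and stability of a periodic orbit, to quantify how the perturbation splits the cycles surrounding a center.
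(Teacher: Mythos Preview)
Your core argument in the second paragraph is correct and genuinely different from the paper's. The paper uses the ``rotational'' perturbation $\bigl(f-\varepsilon g,\,g+\varepsilon f\bigr)$, which simultaneously keeps \emph{every} degenerate monodromic singularity fixed and makes each of them a hyperbolic focus; the price is a nontrivial algebraic lemma (equations \eqref{9}--\eqref{14}) showing that the trace cannot stay zero without forcing $\det DX(p_i)\le 0$. Your diagonal perturbation $f\mapsto f+\varepsilon a(x-\alpha)$, $g\mapsto g+\varepsilon b(y-\beta)$ is tailored to a single center $p$ and makes the trace computation immediate. What the paper's approach buys is a cleaner endgame: since \emph{all} degenerate singularities of $Y$ are hyperbolic foci, $Y$ has no center anywhere, and ``center'' being a topological invariant finishes the proof without needing to pin down $h_Y(p)=p$ via the $\varepsilon$-closeness of Definition~\ref{Def00}. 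Your approach needs that identification, but it is available, so both routes work.

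Two things clutter your proposal. First, the cases ``$p$ at infinity'' and ``$\alpha(\alpha-1)=\beta(\beta-1)=0$'' are vacuous: you already observed that a monodromic singularity cannot lie on an invariant line, and the same reasoning rules out $p\in\mathbb{S}^1$, so a center is always a free finite singularity with $f(p)=g(p)=0$. Second, your third paragraph introduces a dichotomy that does not belong here. The lemma's hypothesis is that $p$ \emph{is} a center, so the weak-focus branch is irrelevant (that case is Lemma~\ref{Lemma7}, not Lemma~\ref{Lemma3}), and your sketch for that branch (``attracting focus is not equivalent to repelling focus'') ignores Hopf bifurcation: flipping the trace sign on a stable weak focus typically creates a repelling focus \emph{plus} a surrounding stable limit cycle, so the local basin of attraction need not change. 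Drop paragraph~3 entirely; your paragraph~2 already contains the complete proof once you note $p\notin\Lambda$.
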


\begin{proof} Let $p_1,\dots,p_k$ be the singularities of $p(X)$ such that the determinant of the Jacobian matrix of $p(X)$ at $p_i$ is positive and its trace is zero. In special, observe that $p_i$ is either a center or a weak focus (see Section~\ref{sub2}). Let $q_1,\dots,q_r$ be the other singularities of $p(X)$. It follows from Lemma~\ref{Lemma2} that $q_j$ is hyperbolic, for all $j$. Therefore, there is a neighborhood $N\subset\mathfrak{X}_d$ of $X$ such that $q_j(Y)$ is hyperbolic, for all $Y\in N$ and $j\in\{1,\dots,r\}$. Moreover, since the infinity of $p(X)$ is invariant, it follows that $p_1,\dots,p_k$ are finite singularities of $p(X)$ and thus are singularities of $X$. Similarly, $p_1,\dots,p_k$ cannot lie in the invariant straight lines $x=0$, $x=1$, $y=0$ and $y=1$. Therefore, if $X=(P,Q)$ is given by \eqref{0}, then $f(p_i)=g(p_i)=0$, $i\in\{1,\dots,k\}$. It follows from the choice of $p_i$ that $\det DX(p_i)>0$ and $\text{T}(DX(p_i))=0$. Given $\varepsilon>0$, let $Y=(R,S)$ be given by
	\[R(x,y)=x(x-1)\bigl(f(x,y)-\varepsilon g(x,y)\bigr), \quad S(x,y)=y(y-1)\bigl(g(x,y)+\varepsilon f(x,y)\bigr).\]
Since $f(p_i)=g(p_i)=0$, it follows that $p_i$ is also a singularity of $Y$, $i\in\{1,\dots,n\}$. Moreover, if $\varepsilon>0$ is small enough, then $Y\in N$ and thus the other singularities of $p(Y)$ are hyperbolic. If $p_i=(a_i,b_i)$, then observe that
\begin{equation}\label{11}
	\begin{array}{l}
		\displaystyle DX(p_i)=\left(\begin{array}{cc} \displaystyle a_i(a_i-1)\frac{\partial f}{\partial x}(p_i) & \displaystyle a_i(a_i-1)\frac{\partial f}{\partial y}(p_i) \vspace{0.2cm} \\ \displaystyle b_i(b_i-1)\frac{\partial g}{\partial x}(p_i) & \displaystyle b_i(b_i-1)\frac{\partial g}{\partial y}(p_i) \end{array}\right), \vspace{0.2cm} \\
		\displaystyle DY(p_i)=\left(\begin{array}{cc} \displaystyle a_i(a_i-1)\left(\frac{\partial f}{\partial x}(p_i) - \varepsilon\frac{\partial g}{\partial x}(p_i)\right) & \star \vspace{0.2cm} \\ \star & \displaystyle b_i(b_i-1)\left(\frac{\partial g}{\partial y}(p_i)+\varepsilon\frac{\partial f}{\partial y}(p_i)\right) \end{array}\right).
	\end{array}
\end{equation}
Therefore,
\begin{equation}\label{9}
	T(DY(p_i))=\varepsilon\left(b_i(b_i-1)\frac{\partial f}{\partial y}(p_i)-a_i(a_i-1)\frac{\partial g}{\partial x}(p_i)\right).
\end{equation}
We claim that \eqref{9} does not vanish. Indeed, if \eqref{9} does vanish, then
\begin{equation}\label{10}
	a_i(a_i-1)\frac{\partial g}{\partial x}(p_i)=b_i(b_i-1)\frac{\partial f}{\partial y}(p_i).
\end{equation}
Moreover, since $T(DX(p_i))=0$, it follows from \eqref{11} that,
\begin{equation}\label{12}
	 a_i(a_i-1)\frac{\partial f}{\partial x}(p_i)=-b_i(b_i-1)\frac{\partial g}{\partial y}(p_i).
\end{equation}	
Replacing \eqref{12} and \eqref{10} at \eqref{11} we have,
\begin{equation}\label{14}
	DX(p_i)=\left(\begin{array}{cc} \displaystyle -b_i(b_i-1)\frac{\partial g}{\partial y}(p_i) & \displaystyle a_i(a_i-1)\frac{\partial f}{\partial y}(p_i) \vspace{0.2cm} \\ \displaystyle b_i(b_i-1)\frac{\partial g}{\partial x}(p_i) & \displaystyle b_i(b_i-1)\frac{\partial g}{\partial y}(p_i) \end{array}\right).
\end{equation}
Observe that $a_i(a_i-1)\neq0$ and $b_i(b_i-1)\neq0$ because $p_i$ does not lie in the invariant lines $x=0$, $x=1$, $y=0$ or $y=1$. Moreover, observe that
	\[\frac{\partial f}{\partial y}(p_i)\neq0, \quad \frac{\partial g}{\partial x}(p_i)\neq0,\]
otherwise, it follows from \eqref{14} that we would have $\det DX(p_i)\leqslant0$. Then, it follows from \eqref{10} that
\begin{equation}\label{13}
	\frac{\partial f}{\partial y}(p_i)=\frac{a_i(a_i-1)}{b_i(b_i-1)}\frac{\partial g}{\partial x}(p_i).
\end{equation}
Replacing \eqref{13} at \eqref{14} we obtain $\det DX(p_i)\leqslant0$, contradicting the choice of $p_i$. Hence, $T(DY(p_i))\neq0$ and thus $p_i$ is a hyperbolic focus of $Y$, for all $i\in\{1,\dots,k\}$. Therefore, $Y$ has no centers. Since $p(X)$ is topologically equivalent to $p(Y)$, it follows that $X$ has no centers. \end{proof}

\begin{remark}\label{Remark1}
	Let $D_1\subset\mathfrak{X}_d$ be the family of vector fields satisfy condition $(a')$ stated in Section~\ref{Sec2}. Then it follows from the proofs of Lemmas~\ref{Lemma1}, \ref{Lemma2} and \ref{Lemma3} that $D_1$ is open and dense in $\mathfrak{X}_d$.
\end{remark}

A common fact on the structural stability of smooth, analytical and polynomial vector fields is that none of them allow finite saddles connection. However, in our context the set $\Lambda$ (recall \eqref{37} Figure~\ref{Fig15}) is a ``fixed structure'', i.e. every $X\in\mathfrak{X}_d$ has $\Lambda$ as an invariant set. Therefore, as we shall in the next lemma and further ahead in this paper, different from the previous works on structural stability, this fixed structure will ``protect'' the saddle connections within and thus allow the existence of certain saddle connections and polycycles as structurally stable objects.

\begin{lemma}\label{Lemma4}
	Let $X\in\Sigma_d^s$. If $p(X)$ has an orbit $\gamma$ going from saddle to saddle, then $\gamma\subset\Lambda$.
\end{lemma}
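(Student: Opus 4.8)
The plan is to argue by contradiction: assume $X\in\Sigma_d^s$ has a saddle-to-saddle connection $\gamma$ with $\gamma\not\subset\Lambda$, and produce an arbitrarily small perturbation inside $\mathfrak{X}_d$ that destroys the connection, contradicting strong structural stability. The key structural fact to exploit is that $\Lambda$ (the four invariant lines $x=0$, $x=1$, $y=0$, $y=1$ together with $\mathbb{S}^1$) is an invariant set for \emph{every} vector field in $\mathfrak{X}_d$, because every $Y=(R,S)\in\mathfrak{X}_d$ has the factors $x(x-1)$ in $R$ and $y(y-1)$ in $S$. So any connection lying in $\Lambda$ is ``protected'' and cannot be broken by perturbations within $\mathfrak{X}_d$; conversely, I must show that a connection \emph{not} entirely in $\Lambda$ can always be broken. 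By Lemma~\ref{Lemma2} all singularities of $p(X)$ are simple, and since we are assuming $X$ is strong structurally stable we may also assume (shrinking a neighborhood $N$ of $X$) that the two saddles at the endpoints of $\gamma$ are hyperbolic and persist for all $Y\in N$; the saddles could be finite or at infinity.

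First I would reduce to the generic situation. Since $\gamma\not\subset\Lambda$ and $\Lambda$ is invariant, the open orbit $\gamma$ (excluding its two endpoint saddles) is disjoint from $\Lambda$; in particular it lies in one of the open ``boxes'' cut out by the lines $x=0,1$ and $y=0,1$ in the Poincaré disk, or crosses infinity in a region disjoint from the infinite singularities. Pick a regular point $q\in\gamma$ not on $\Lambda$ and a small transversal segment $\tau$ through $q$ with $\tau\cap\Lambda=\varnothing$. The standard mechanism for breaking a saddle connection is to push the unstable separatrix of one saddle to one side of the stable separatrix of the other along $\tau$, i.e. to make the ``split function'' (the signed distance between the two separatrices measured on $\tau$) change sign. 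The classical tool is to add a bump function supported near $q$ and transverse to $\gamma$; the obstacle here is that bump functions are not available in $\mathfrak{X}_d$. The fix is to find a \emph{polynomial} perturbation that (i) keeps the perturbed field in $\mathfrak{X}_d$, i.e. preserves the factorization $R=x(x-1)r$, $S=y(y-1)s$, and (ii) has a nonzero component transverse to $\gamma$ at $q$ while not moving the endpoint saddles nor creating new saddle connections in the relevant region.

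Concretely, write $X=(P,Q)=(x(x-1)f,\,y(y-1)g)$ and consider the one-parameter family $Y_\lambda=(x(x-1)(f+\lambda\, a),\, y(y-1)(g+\lambda\, b))$ for suitable constants or low-degree polynomials $a,b$ chosen so that $Y_\lambda\in\mathfrak{X}_d$ automatically (e.g. $a,b$ constants, so $\deg(f+\lambda a)\le d$). The effect on the vector field along $\gamma$ is the perturbation term $(x(x-1)\lambda a,\ y(y-1)\lambda b)$, which at the point $q$ is a nonzero vector as long as $a,b$ are not both zero and $q$ is not on $\Lambda$ (the coefficients $x(x-1)$, $y(y-1)$ are nonzero off $\Lambda$). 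By choosing the direction $(a,b)$ appropriately — specifically so that the component of $(x_q(x_q-1)a,\ y_q(y_q-1)b)$ transverse to $\gamma$ at $q$ is nonzero — and invoking the standard first-order (Melnikov-type) computation for the displacement of separatrices under perturbation, the derivative in $\lambda$ of the split function at $\lambda=0$ is a nonzero integral of this transverse component weighted along $\gamma$. Hence for all sufficiently small $\lambda$ of the right sign the connection $\gamma$ is broken: the unstable separatrix of the first saddle no longer reaches the second. Since $Y_\lambda\to X$ as $\lambda\to0$ and $Y_\lambda\in\mathfrak{X}_d$, and $p(X)$ has a saddle connection while $p(Y_\lambda)$ does not (topological equivalence must send saddle separatrices to saddle separatrices and hence a saddle connection to a saddle connection), this contradicts $X\in\Sigma_d^s$.

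The main obstacle is making the ``first-order displacement of the split function is nonzero'' step rigorous in a way that genuinely lives inside $\mathfrak{X}_d$: one must check that the chosen polynomial direction $(a,b)$ does not accidentally make the Melnikov integral vanish, and that the same perturbation does not simultaneously create a different saddle connection elsewhere (or move an infinite saddle). The vanishing issue can be handled by noting that the integrand is, up to the fixed nonvanishing weight $e^{-\int \mathrm{div}}$ along $\gamma$, a linear combination of $x(x-1)a$ and $y(y-1)b$ contracted with the normal to $\gamma$; varying $(a,b)$ over a two-dimensional space of constants (or, if needed, over monomials up to degree $d$), the set of $(a,b)$ making the integral zero is a proper linear subspace, so a generic choice works. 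The ``no new connections'' issue is handled locally: break $\gamma$ first, then note that being non-equivalent is already achieved, and that additional perturbation is not required. If $\gamma$ joins a saddle at infinity, the same argument runs in the Poincaré charts $U_i,V_i$ using the chart expressions from Section~\ref{sub1}, choosing the regular point $q$ in the finite part of $\gamma$ if possible, or in a chart where the separatrix displacement is still governed by a nonvanishing Melnikov integral.
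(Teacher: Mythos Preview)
Your overall strategy---contradiction, a one-parameter family in $\mathfrak{X}_d$, and a first-order (Melnikov) computation of the separatrix splitting---is exactly the paper's approach. The difference is in the choice of perturbation direction. You take $a,b$ to be unspecified constants (or low-degree polynomials) and then argue by genericity that the Melnikov integral does not vanish for a generic $(a,b)$. The paper instead makes the specific choice $a=-g$, $b=f$, i.e.
\[
P_\lambda=x(x-1)\bigl(f-\lambda\varepsilon g\bigr),\qquad Q_\lambda=y(y-1)\bigl(g+\lambda\varepsilon f\bigr),
\]
which yields
\[
Y_0\wedge \frac{\partial Y_0}{\partial\lambda}=x(x-1)y(y-1)\,\varepsilon\bigl(f^2+g^2\bigr).
\]
Since $\gamma\cap\Lambda=\varnothing$, the factor $x(x-1)y(y-1)$ has constant sign along $\gamma$, and $f^2+g^2>0$ on the regular orbit $\gamma$; hence the integrand in the Melnikov formula is sign-definite and $D'(0)\neq 0$ automatically. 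This bypasses your genericity step entirely.

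Your genericity argument has a soft spot: with $a,b$ merely constants, the functional $(a,b)\mapsto\int e^{-\int\mathrm{div}}x(x-1)y(y-1)(fb-ga)\,dt$ could in principle be identically zero (both coordinate integrals could vanish), and you do not actually prove otherwise. You hedge by allowing ``monomials up to degree $d$'', which is fine---indeed the paper's choice $(a,b)=(-g,f)$ lives in that space and witnesses non-triviality of the functional---but you never identify a witness. So as written, the step ``a proper linear subspace, so a generic choice works'' is an assertion rather than an argument. The fix is precisely the paper's rotation-type perturbation above.

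A second point: once $D'(0)\neq 0$, you conclude immediately that ``$p(Y_\lambda)$ does not'' have a saddle connection. For the contradiction with \emph{strong} structural stability you need slightly more: not just that the original connection is broken, but that no saddle connection between the perturbed saddles exists \emph{near} $\gamma$ (since the equivalence homeomorphism must be $\varepsilon$-close to the identity). The paper handles this by showing that if $Y_\lambda$ had such a connection $\gamma_\lambda$, then $\gamma_\lambda\to\gamma$ uniformly (using continuous dependence plus the Stable Manifold Theorem near the saddles), forcing $D(\lambda)\to 0$, contradicting $D(\lambda)\neq 0$ for $\lambda\neq 0$. Your last paragraph gestures at this but does not carry it out.
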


\begin{proof} Suppose by contradiction that $p(X)$ has an orbit $\gamma\not\subset\Lambda$ from saddle to saddle. Let $X=(P,Q)$ be given by
\begin{equation}\label{3}
	P(x,y)=x(x-1)f(x,y), \quad Q(x,y)=y(y-1)g(x,y).
\end{equation}
Given $\varepsilon>0$, let $Y_\lambda=(P_\lambda,Q_\lambda)$ be the $1$-parameter family given by
\begin{equation}\label{4}
	P_\lambda(x,y)=x(x-1)\bigl(f(x,y)-\lambda\varepsilon g(x,y)\bigr), \quad Q_\lambda(x,y)=y(y-1)\bigl(g(x,y)+\lambda\varepsilon f(x,y)\bigr),
\end{equation}
with $\lambda\in[-1,1]$. Observe that $Y_\lambda$ is arbitrarily close to $X$, provided that $\varepsilon>0$ is small enough. Let $p_1,\dots,p_k$ be the saddles of $p(X)$. It follows from Lemma~\ref{Lemma2} that $p_i$ is hyperbolic, $i\in\{1,\dots,k\}$. Since $p(X)$ is topologically equivalent to $p(Y_\lambda)$, it follows that $p_i(\lambda)$, $i\in\{1,\dots,k\}$, are the only saddles of $p(Y_\lambda)$, $\lambda\in[-1,1]$. Let $p_i=p$ and $p_j=q$ (we are not excluding the case $p=q$) be such that $p(X)$ has an orbit $\gamma$ from $p$ to $q$, with $\gamma\not\subset\Lambda$. Since the infinity of $p(X)$ is invariant, it follows that $\gamma$ is an orbit of $X$ (even if $p$ or $q$ is at infinity). Given $x_0\in\gamma$, let $l_0$ be a transversal section to $\gamma$ at $x_0$, endowed with a coordinate system $n$. Let $\gamma^s_\lambda$ and $\gamma^u_\lambda$ be the perturbations of $\gamma$ such that $\omega(\gamma^s_\lambda)=q(\lambda)$ and $\alpha(\gamma^u_\lambda)=p(\lambda)$. Let $x^s_\lambda$ and $x^u_\lambda$ be the intersections of $\gamma^s_\lambda$ and $\gamma^u_\lambda$ with $l_0$ and $n^s(\lambda)$, $n^u(\lambda)$ be its coordinates along $l_0$. It follows from \cite{Perko1994} that the \emph{displacement function} $D\colon[-1,1]\to\mathbb{R}$ given by
	\[D(\lambda)=n^u(\lambda)-n^s(\lambda),\]
is well defined and of class $C^\infty$.  See Figure~\ref{Fig2}.
\begin{figure}[h]
	\begin{center}
		\begin{minipage}{5.5cm}
			\begin{center}
				\begin{overpic}[width=5cm]{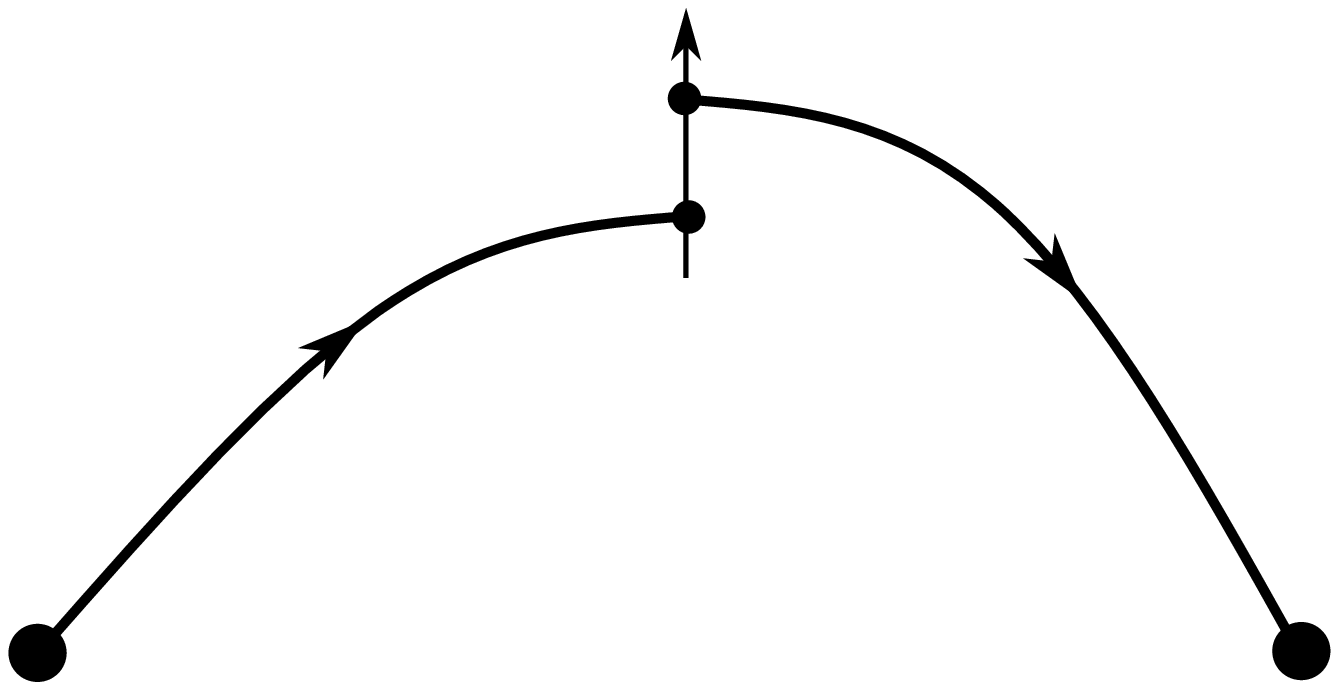} 
					\put(7,0){$p$}
					\put(90,0){$q$}
					\put(54,47){$x_\lambda^s$}
					\put(54,33){$x_\lambda^u$}
					\put(85,25){$\gamma_\lambda^s$}
					\put(5,18){$\gamma_\lambda^u$}
				\end{overpic}
				
				$D(\lambda)<0$.
			\end{center}
		\end{minipage}
		\begin{minipage}{5.5cm}
			\begin{center}
				\begin{overpic}[width=5cm]{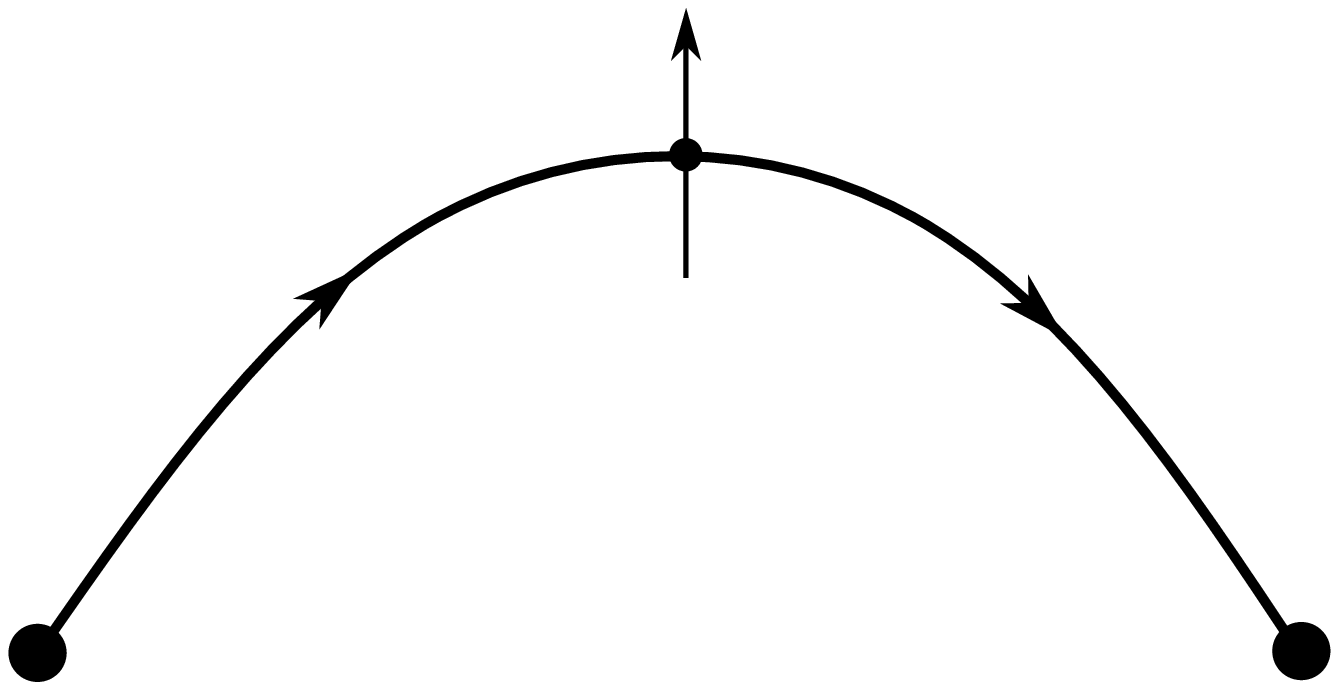} 
					\put(7,0){$p$}
					\put(90,0){$q$}
					\put(53,48){$l_0$}
					\put(52,34){$x_0$}
					\put(85,22){$\gamma$}
				\end{overpic}
				
				$D(\lambda)=0$.
			\end{center}
		\end{minipage}
		\begin{minipage}{5.5cm}
			\begin{center}
				\begin{overpic}[width=5cm]{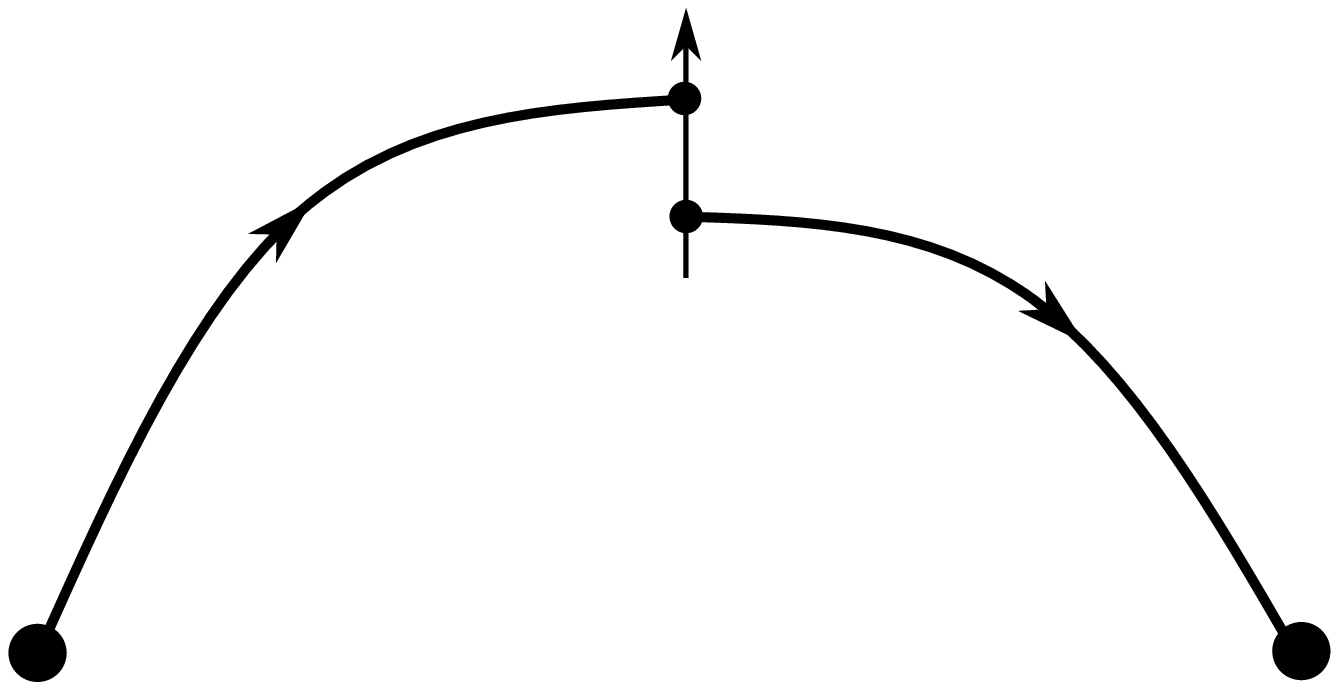} 
					\put(7,0){$p$}
					\put(90,0){$q$}
					\put(54,43){$x_\lambda^u$}
					\put(41,33){$x_\lambda^s$}
					\put(85,25){$\gamma_\lambda^s$}
					\put(13,18){$\gamma_\lambda^u$}
				\end{overpic}
				
				$D(\lambda)>0$.
			\end{center}
		\end{minipage}
	\end{center}
	\caption{The displacement map of a connection between saddles.}\label{Fig2}
\end{figure}

Moreover, it also follows from \cite{Perko1994} that
\begin{equation}\label{5}
	D'(0)=-\frac{1}{||Y_0(x_0)||}\int_{-\infty}^{+\infty}\left(e^{-\int_{0}^{t}Div(\gamma(s))\;ds}\right)Y_0(\gamma(t))\land\dfrac{\partial Y_0}{\partial \lambda}(\gamma(t))\;dt,
\end{equation}
where $\gamma(t)$ is a parametrization of $\gamma$ such that $\gamma(0)=x_0$ and $(x_1,x_2)\land(y_1,y_2)=x_1y_2-x_2y_1$. Let
	\[H(x,y)=Y_0(x,y)\land\dfrac{\partial Y_0}{\partial \lambda}(x,y),\]
and observe that,
	\[H(x,y)=x(x-1)y(y-1)\varepsilon\bigl[f(x,y)^2+g(x,y)^2\bigr].\]
Since $\gamma\not\subset\Lambda$ and $\Lambda$ is invariant, it follows that $\gamma\cap\Lambda=\emptyset$. In particular, $\gamma\subset\mathbb{R}^2$. Therefore, $H$ has constant sign on $\gamma$ and thus it follows from \eqref{5} that $D'(0)\neq0$. Hence, $\lambda=0$ is an isolated zero of $D(\lambda)$. Let $\varepsilon>0$ be small enough such that $\lambda=0$ is the unique zero of $D(\lambda)$. We now proof that if $|\lambda|>0$ is small enough, then $Y_\lambda$ does not have a connection between $p$ and $q$. Suppose by contradiction that $Y_\lambda$ has an orbit $\gamma_\lambda$ from $p$ to $q$. See Figure~\ref{Fig3}.
\begin{figure}[h]
	\begin{center}
		\begin{overpic}[height=4cm]{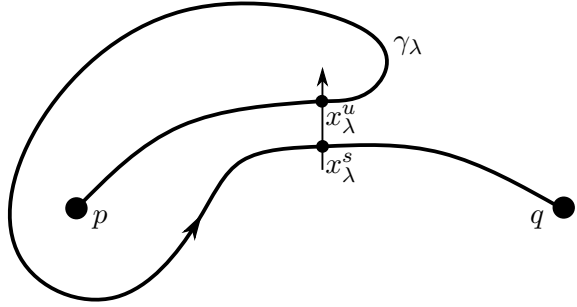} 
			\put(15,14){$p$}
			\put(92,14){$q$}
			\put(68,45){$\gamma_\lambda$}
			\put(56,31.5){$x_\lambda^u$}
			\put(56,23){$x_\lambda^s$}
		\end{overpic}
	\end{center}
	\caption{An illustration of $\gamma_\lambda$.}\label{Fig3}
\end{figure}
Let $\gamma_\lambda(t)$ be a parametrization of $\gamma_\lambda$ such that $\gamma_\lambda(0)=x_\lambda^u$. It follows from the continuity of solutions with respect to initial conditions (see Theorem~$4$, p. 92 of \cite{Perko2001}) that 
\begin{equation}\label{17}
	\lim\limits_{\lambda\to0}\gamma_\lambda(t)=\gamma(t)
\end{equation}
for all $t\in\mathbb{R}$, with the convergence being uniform on any compact interval $t\in[-t_0,t_0]$. Moreover, it follows from the Stable Manifold Theorem (see Section $2.7$ of \cite{Perko2001}) that the limit \eqref{17} is uniform for all $|t|\geqslant t_0$, provided $t_0>0$ is sufficiently large. Hence, if $\ell_0$ denotes the length (in relation to the standard norm of $\mathbb{R}^2$) of $\gamma$ and $\ell(\lambda)$ denotes the length of $\gamma_\lambda$, it follows that
	\[\lim\limits_{\lambda\to0}\ell(\lambda)=\ell_0,\]
contradicting the fact that $D(\lambda)\neq0$. \end{proof}

\begin{remark}\label{Remark2}
	Let $D_2\subset\mathfrak{X}_d$ be the family of vector fields satisfy conditions $(a')$ and $(c)$, stated in Section~\ref{Sec2}. Then it follows from Remark~\ref{Remark1} and from the proof of Lemma~\ref{Lemma4} that $D_2$ is open and dense in $\mathfrak{X}_d$.
\end{remark}

\begin{lemma}\label{Lemma5}
	Let $X\in\Sigma_d^s$. Then every closed orbit of $p(X)$ is a limit cycle. 
\end{lemma}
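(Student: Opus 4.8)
The plan is to argue by contradiction: suppose $p(X)$ has a closed orbit $\gamma$ that is not a limit cycle, i.e. $\gamma$ is accumulated on at least one side by other periodic orbits. Since a closed orbit of $p(X)$ in $\mathbb{R}^2$ bounds a region containing a singularity (Remark~\ref{Remark0}), and since the infinity $\mathbb{S}^1$ and the four lines $\gamma_1,\dots,\gamma_4$ are invariant, the periodic orbit $\gamma$ must be contained in one of the four open rectangles cut out by $\Lambda$; in particular $\gamma\cap\Lambda=\emptyset$. By continuity $\gamma$ lies in a full period annulus $A$ which is open and foliated by periodic orbits, and $A$ too is disjoint from $\Lambda$. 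The idea is then to apply the same rotational perturbation used in Lemmas~\ref{Lemma3} and \ref{Lemma4}: for $\varepsilon>0$ set
\[
	R(x,y)=x(x-1)\bigl(f(x,y)-\varepsilon g(x,y)\bigr), \quad S(x,y)=y(y-1)\bigl(g(x,y)+\varepsilon f(x,y)\bigr),
\]
obtaining $Y_\varepsilon=(R,S)\in\mathfrak{X}_d$ arbitrarily close to $X$. The key computation is that $X\wedge Y_\varepsilon = x(x-1)y(y-1)\,\varepsilon\,(f^2+g^2)$, which has a fixed sign on the annulus $A$ (all four factors $x$, $x-1$, $y$, $y-1$ keep constant sign there, and $f^2+g^2>0$ since $A$ contains no singularities). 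Hence $\{Y_\varepsilon\}$ restricted to a neighbourhood of $\gamma$ is a family of \emph{rotated vector fields} in the sense of Duff/Perko.

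Next I would invoke the standard theory of rotated vector fields (e.g. \cite[Theorem~$2$, p.~$387$]{Perko2001}): for a one-parameter family of rotated vector fields, every periodic orbit of the unperturbed field either is a hyperbolic limit cycle that persists, or, if it lies in a period annulus, it disappears for every $\varepsilon\neq0$ small — the periodic orbits move monotonically with $\varepsilon$ and no two periodic orbits of distinct members of the family can intersect. Applying this to the annulus $A$: for every $\varepsilon\neq 0$ the orbits through points of a fixed compact sub-annulus of $A$ are no longer closed, so $p(Y_\varepsilon)$ has strictly fewer closed orbits than $p(X)$ in that region — indeed $p(X)$ has infinitely many closed orbits there and $p(Y_\varepsilon)$ has at most finitely many nearby. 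This contradicts the strong structural stability of $X$: since $p(X)$ and $p(Y_\varepsilon)$ are topologically equivalent, the equivalence homeomorphism $h_{Y_\varepsilon}$ must send the family of closed orbits of $p(X)$ bijectively onto closed orbits of $p(Y_\varepsilon)$, which is impossible once the annulus has been destroyed.

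To make the contradiction fully rigorous without appealing to cardinality alone, I would instead fix two concrete periodic orbits $\gamma_1,\gamma_2\subset A$ of $p(X)$ bounding a closed sub-annulus $\overline A'\subset A$, and show via the displacement (return) map along a transversal $\ell\subset A$ that for $\varepsilon\neq0$ small the return map has \emph{no} fixed point in $\ell\cap\overline A'$ — this is exactly where the monotonicity $\partial D/\partial\varepsilon\neq0$ coming from the fixed sign of $X\wedge Y_\varepsilon$ enters, in direct analogy with formula~\eqref{5} in the proof of Lemma~\ref{Lemma4}. Then $p(Y_\varepsilon)$ has no closed orbit meeting the compact arc $\ell\cap\overline A'$, while $p(X)$ does, and a topological equivalence cannot exist. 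The main obstacle I anticipate is the bookkeeping needed to confirm that the rotated–vector–field hypotheses genuinely hold on a full neighbourhood of the period annulus inside $\mathbb{S}^2$ (including, if $\gamma$ were near infinity, in the Poincaré charts), and to guarantee that the perturbation $Y_\varepsilon$ stays in $\mathfrak{X}_d$ while not creating or destroying closed orbits elsewhere in a way that could be "absorbed" by the homeomorphism — but the sign computation for $X\wedge Y_\varepsilon$ handles the first point cleanly, and localizing to the compact arc $\ell\cap\overline A'$ handles the second.
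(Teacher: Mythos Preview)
Your argument is essentially correct and constitutes a genuinely different proof from the paper's. A couple of minor slips first: the lines $x=0$, $x=1$, $y=0$, $y=1$ cut the plane into nine open regions, not four, but this is harmless since your only use of this is to conclude that $x(x-1)y(y-1)$ has a fixed sign on the connected region containing the annulus, which remains true. Also, your first appeal to Remark~\ref{Remark0} is not actually what forces $\gamma\cap\Lambda=\emptyset$; the correct reason is simply that $\Lambda$ is invariant and contains no periodic orbits.

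The paper takes a quite different route: rather than perturb at all, it uses the already-established Lemmas~\ref{Lemma3} and~\ref{Lemma4} to analyse the \emph{inner boundary} of the maximal period annulus. Since by Lemma~\ref{Lemma3} no singularity is a center, the annulus cannot extend all the way to a singularity; so its boundary $\partial R$ (on the side of the bounded component) contains a regular point $r$, and by Poincar\'e--Bendixson the orbit through $r$ must have both $\alpha$- and $\omega$-limit equal to saddles (a limit cycle or polycycle as a limit set would force nearby orbits on the annulus side to spiral, contradicting that they are closed). This produces a saddle connection $\nu\not\subset\Lambda$, contradicting Lemma~\ref{Lemma4}.

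Your approach is more self-contained --- it does not invoke Lemmas~\ref{Lemma3} or~\ref{Lemma4} --- and goes straight at the annulus with the rotational perturbation $Y_\varepsilon$, using the Melnikov-type formula for the displacement map (exactly as in \eqref{18}) together with the sign computation $X\wedge\partial_\varepsilon Y_\varepsilon = x(x-1)y(y-1)(f^2+g^2)$ to show that on any compact transversal arc inside the annulus the displacement is strictly monotone in $\varepsilon$, hence nonzero for small $\varepsilon\neq 0$. Then the $\varepsilon$-closeness of $h_{Y_\varepsilon}$ to the identity forces $h_{Y_\varepsilon}$ to send a fixed periodic orbit of $X$ in the interior of $\overline{A'}$ to a nearby periodic orbit of $Y_\varepsilon$, of which there are none. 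The paper's proof is more economical in the global architecture (it recycles conclusions rather than the perturbation itself), while yours makes the mechanism of instability completely explicit and avoids any dependence on the somewhat delicate boundary analysis.
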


\begin{proof} Let $\gamma$ be a closed orbit of $p(X)$. Observe that the infinity of $p(X)$ has at least four singularities. Therefore, $\gamma$ is in fact a periodic orbit of $X$. Since $X$ is polynomial, it follows from the \emph{Poincar\'e Theorem} (see \cite{Perko2001}, p. 217) that $\gamma$ is either a limit cycle or it is contained in an open ring $R$ of periodic orbits. Suppose by contradiction that the later holds and let $R$ be the maximal ring of periodic orbits. Let $K$ be the bounded region limited by $\gamma$. It follows from Remark~\ref{Remark0} that there is at least one singularity $p\in K$. It follows from Lemma~\ref{Lemma3} that $p$ is not a center and thus there is a neighborhood $V$ of $p$ such that $V\cap R=\emptyset$. Let $q$ be a point of $\gamma$ such that the segment $\overline{pq}$ is contained in $K$ and is transversal to $\gamma$ (such point exists because $\gamma$ is an analytical curve). The set $\overline{pq}\backslash\{\overline{pq}\cap R\}$ is closed in $\overline{pq}$. Let $r\in\overline{pq}\cap\partial R$ be the closest point to $q$. We claim that we can suppose that $r$ is not a singularity of $X$. Indeed, if $r$ is a singularity, then it follows from Lemma~\ref{Lemma1} that it is isolated. Therefore, it follows from the transversality between $\overline{pq}$ and $\gamma$ that we can take a slight change in $q$ and thus suppose that $r$ is not a singularity.	Observe that the orbit passing through any $s\in \overline{rq}$ is contained in $R$ and thus it is closed. Let $\nu_+$ be the positive semi-orbit passing through $r$. It follows from the Poincar\'e-Bendixson Theorem (see Section~$3.7$ of \cite{Perko2001}) that one of the following statements holds.
\begin{enumerate}[label=(\alph*)]
	\item $\nu_+$ approaches a singularity $\rho$;
	\item $\nu_+$ approaches a limit cycle or a polycycle.
\end{enumerate}
Observe that statement $(b)$ cannot hold, otherwise the orbits near $r$ on $\overline{rq}$ would also approach the same limit cycle or polycycle, contradicting the fact that they are closed. Hence, statement $(a)$ holds and $\rho$ must be a saddle. The same argument shows that the negative semi-orbit $\nu_-$ through $r$ also goes to some saddle. Hence, the orbit $\nu$ though $r$ is a connection between saddles, contradicting Lemma~\ref{Lemma4}. This contradiction finishes this prove.\end{proof}

As stated in Section~\ref{Sec2}, the proof that non-hyperbolic limit cycles of even degree are structurally unstable in the family of planar polynomial vector fields follows from the theory of rotated vector fields. More precisely, if $X=(P,Q)$ is a polynomial vector field, then $Y_\lambda=X+\lambda X^\perp$ is the suitable perturbation that breaks the non-hyperbolic limit cycle, where $X^\perp=(-Q,P)$ and $|\lambda|>0$ is small enough. However, in our context this perturbation is not suitable because it breaks the set $\Lambda$ (recall \eqref{37} and Figure~\ref{Fig15}), i.e. $Y_\lambda\not\in\mathfrak{X}_d$. Hence, as we shall see in the proof of the next lemma, to overcome this situation we will make use of a similar perturbation but with a different proof, that does not rely on rotated vector fields.

\begin{lemma}\label{Lemma6}
	Let $X\in\Sigma_d^s$. If $\gamma$ is a limit cycle of $X$, then it is not semi-stable.
\end{lemma}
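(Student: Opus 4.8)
The plan is to mimic the standard Andronov--Pontrjagin-style argument that a semi-stable limit cycle can be split into two limit cycles by a suitable perturbation, but to replace the usual rotated-vector-field perturbation $X+\lambda X^{\perp}$ (which destroys the invariant set $\Lambda$) by the $\Lambda$-preserving perturbation $Y_\lambda=(P_\lambda,Q_\lambda)$ already used in Lemma~\ref{Lemma4}, namely
\[
P_\lambda(x,y)=x(x-1)\bigl(f(x,y)-\lambda\varepsilon g(x,y)\bigr),\qquad
Q_\lambda(x,y)=y(y-1)\bigl(g(x,y)+\lambda\varepsilon f(x,y)\bigr),
\]
with $\lambda\in[-1,1]$ and $\varepsilon>0$ small. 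First I would argue that $\gamma$ is a genuine periodic orbit of $X$ in $\mathbb R^2$ lying off $\Lambda$: since the infinity is invariant and $\gamma$ is a limit cycle it cannot be contained in $\mathbb S^1$ nor in any of the $\gamma_i$, and since $\Lambda$ is invariant, $\gamma\cap\Lambda=\emptyset$. Then $x(x-1)y(y-1)$ has constant (nonzero) sign along $\gamma$.

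Next I would set up the displacement (return) map. Fix a transversal section $\ell_0$ through a point $x_0\in\gamma$ with coordinate $n$, and for $(\lambda,n)$ near $(0,0)$ let $d(\lambda,n)$ be the Poincar\'e displacement map of $Y_\lambda$ along $\ell_0$; it is $C^\infty$ and $d(0,\cdot)$ has a zero of even order at $n=0$ because $\gamma$ is semi-stable. The key computation is the derivative of $d$ with respect to the parameter at the cycle, which by the classical Perko formula (as cited in the proof of Lemma~\ref{Lemma4}, see \cite{Perko1994}) equals, up to a positive factor,
\[
\frac{\partial d}{\partial\lambda}(0,0)=\frac{1}{\|Y_0(x_0)\|}\int_{0}^{T}\exp\!\left(-\int_{0}^{t}\mathrm{Div}\,Y_0(\gamma(s))\,ds\right)\,Y_0(\gamma(t))\land\frac{\partial Y_0}{\partial\lambda}(\gamma(t))\,dt ,
\]
and again $Y_0(x,y)\land\partial_\lambda Y_0(x,y)=\varepsilon\,x(x-1)y(y-1)\bigl[f(x,y)^2+g(x,y)^2\bigr]$, which has constant sign on $\gamma$. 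Hence the integrand has constant sign, so $\partial_\lambda d(0,0)\neq0$. Therefore, along a curve of parameters near $0$, the function $\lambda\mapsto d(\lambda,0)$ changes sign, while $n\mapsto d(0,n)$ touches zero from one side; a standard argument (either Rolle/intermediate value applied to the one-variable slices, or the observation that the graph of $d(\lambda,n)=0$ near the origin is, by the implicit function theorem applied after a preparation, a curve transverse to $\{\lambda=0\}$) shows that for an appropriate sign of small $\lambda$ the equation $d(\lambda,n)=0$ has at least two solutions in $n$, i.e. $Y_\lambda$ has at least two limit cycles near $\gamma$, while for the opposite sign of $\lambda$ it has none. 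Either way $Y_\lambda$ is not topologically equivalent to $X$, contradicting $X\in\Sigma_d^s$, since by Lemma~\ref{Lemma1}, Lemma~\ref{Lemma2} and Lemma~\ref{Lemma4} all other singularities and saddle connections of $Y_\lambda$ match those of $X$ for $\varepsilon$ small.

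To make the ``two cycles for one sign, none for the other'' step rigorous without rotated-vector-field monotonicity, I would use the sign of $\partial_\lambda d(0,0)$ together with the semi-stability of $\gamma$ as follows: since $d(0,n)$ has, say, $d(0,n)\le 0$ for $n$ near $0$ with $d(0,0)=0$ (the stable-from-one-side case; the other case is symmetric), and $\partial_\lambda d(0,0)>0$ (after fixing the sign of $\varepsilon$), then for small $\lambda>0$ we get $d(\lambda,0)>0$, while $d(\lambda,\pm\delta)<0$ for a fixed small $\delta>0$ and all small $\lambda$ by continuity; the intermediate value theorem on each side of $0$ then yields two zeros of $d(\lambda,\cdot)$, hence two limit cycles. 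For small $\lambda<0$, $d(\lambda,n)<0$ on a whole neighborhood of $n=0$, so no limit cycle of $Y_\lambda$ survives near $\gamma$. The main obstacle I anticipate is purely bookkeeping: ensuring that the perturbation stays inside $\mathfrak X_d$ (automatic here, since $Y_\lambda$ has the required form $x(x-1)(\cdots)$, $y(y-1)(\cdots)$), that the return map and the Perko variational formula are valid in the Poincar\'e-disk setting even when $\gamma$ is large, and that the count of limit cycles near $\gamma$ is what actually obstructs topological equivalence — this last point needs the other global features (finitely many hyperbolic singularities off $\gamma$, no new saddle connections) to be stable under the perturbation, which is guaranteed by the earlier lemmas for $\varepsilon$ small enough.
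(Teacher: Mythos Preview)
Your approach is correct and in fact more elementary than the paper's. Both proofs use the same $\Lambda$-preserving one-parameter family $Y_\lambda$ and the same Perko variational formula to obtain $\partial_\lambda d(0,0)\neq 0$ from the constant sign of $x(x-1)y(y-1)(f^2+g^2)$ along $\gamma$. From there, however, the arguments diverge. The paper invokes the Weierstrass Preparation Theorem to write $D(Y,x)=U(Y,x)\bigl(x^n+a_{n-1}(Y)x^{n-1}+\dots+a_0(Y)\bigr)$, shows $a_0$ is non-constant, then argues via the discriminant and Whitney stratification that for $Y$ off a measure-zero set the Weierstrass polynomial has only simple roots; since $n$ is even this forces either zero or at least two real roots, contradicting strong structural stability. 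Your argument bypasses all of this: because the multiplicity is even, $d(0,\cdot)$ has a definite sign on a punctured neighbourhood of $0$, and the transversality $\partial_\lambda d(0,0)\neq 0$ lets you push $d(\lambda,0)$ to the opposite sign for one choice of small $\lambda$, whence the intermediate value theorem immediately produces two zeros of $d(\lambda,\cdot)$ on either side of $0$. This is cleaner and uses nothing beyond continuity.

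Two small remarks. First, your claim that for the opposite sign of $\lambda$ one has $d(\lambda,n)<0$ on a whole neighbourhood of $0$ (hence no surviving cycle) is not justified as stated and is not automatic from what you have; fortunately you do not need it, since producing two nearby limit cycles for \emph{one} sign of $\lambda$ already contradicts $X\in\Sigma_d^s$ once $\varepsilon$ in Definition~\ref{Def00} is chosen smaller than half the distance from $\gamma$ to any other periodic orbit of $X$. Second, the variational formula you want here is the periodic-orbit version (integral over $[0,T]$), which in the paper is drawn from \cite{Perko1992} rather than \cite{Perko1994}; your integral is the correct one, only the citation should be adjusted.
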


\begin{proof} Observe that the infinity of $p(X)$ has at least four singularities. Therefore, any limit cycle of $p(X)$ is a limit cycle of $X$. Suppose by contradiction that $\gamma$ is a semi-stable limit cycle of $X$, i.e. $\gamma$ is a non-hyperbolic limit cycle of $X$, with even multiplicity. Let $N\subset\mathfrak{X}_d$ be a small enough neighborhood of $X$ such that the displacement map $D\colon N\times L\to\mathbb{R}$ is well defined, where $L$ is a transversal section of $\gamma$. Let $n$ be the multiplicity of $\gamma$. That is, $n\geqslant2$ is the first integer such that
	\[\frac{\partial^n D}{\partial x^n}(X,0)\neq0.\]
It follows from Weierstrass Preparation Theorem (see Chapter 4 of \cite{GolGui2012}) that
	\[D(Y,x)=U(Y,x)P(Y,x),\] 
where $U$ is a strictly positive analytical function and
	\[P(Y,x)=x^n+a_{n-1}(Y)x^{n-1}+\dots+a_1(Y)x+a_0(Y),\]
where $a_i$ is analytical and $a_i(X)=0$, $i\in\{1,\dots,n-1\}$. Let $X=(P,Q)$. Given $\varepsilon>0$, let $Y_\lambda$ be the $1$-parameter family given by
\begin{equation}\label{15}
	P_\lambda(x,y)=x(x-1)\bigl(f(x,y)-\lambda\varepsilon g(x,y)\bigr), \quad Q_\lambda(x,y)=y(y-1)\bigl(g(x,y)+\lambda\varepsilon f(x,y)\bigr),
\end{equation}
with $\lambda\in[-1,1]$. Observe that if $\varepsilon>0$ is small enough, then $Y_\lambda\in N$, for all $\lambda\in[-1,1]$. Observe also that $Y_0=X$. Let $\gamma(t)$ be a parametrization of $\gamma$ such that $\gamma(0)=p$, where $\{p\}=L\cap\gamma$. Let $T>0$ be the period of $\gamma$. It follows from Lemma~$2$ of \cite{Perko1992} that,
\begin{equation}\label{18}
	\frac{\partial D}{\partial\lambda}(Y_0,0)=C\int_{0}^{T}\left(e^{-\int_{0}^{t}Div(\gamma(s))\;ds}\right)Y_0(\gamma(t))\land\dfrac{\partial Y_0}{\partial \lambda}(\gamma(t))\;dt,
\end{equation}
where $C\in\mathbb{R}\backslash\{0\}$ and $(x_1,x_2)\land(y_1,y_2)=x_1y_2-x_2y_1$. Observe that, 
	\[a_0(Y)=\frac{D(Y,0)}{U(Y,0)}.\] 
To simplify the notation, let $V(Y,x)=U(Y,x)^{-1}$. Hence,
	\[\frac{\partial a_0}{\partial\lambda}(Y_0)=\frac{\partial V}{\partial\lambda}(Y_0,0)\underbrace{D(Y_0,0)}_{0}+V(Y_0,0)\frac{\partial D}{\partial\lambda}(Y_0,0).\]
Thus, it follows from \eqref{18} that,
\begin{equation}\label{19}
	\frac{\partial a_0}{\partial\lambda}(Y_0)=V(Y_0,0)C\int_{0}^{T}\left(e^{-\int_{0}^{t}Div(\gamma(s))\;ds}\right)Y_0(\gamma(t))\land\dfrac{\partial Y_0}{\partial \lambda}(\gamma(t))\;dt.
\end{equation}
It follows from \eqref{15} that,
\begin{equation}\label{34}
	Y_0(x,y)\land\dfrac{\partial Y_0}{\partial \lambda}(x,y)=x(x-1)y(y-1)\varepsilon\bigl[f(x,y)^2+g(x,y)^2\bigr].
\end{equation}
Since $\gamma$ is a limit cycle, it follows that $\gamma\cap\Lambda=\emptyset$ and thus $x(x-1)y(y-1)$ is non-zero and has constant sign on $\gamma$. Therefore, it follows from \eqref{34} that either
	\[Y_0(\gamma(t))\land\dfrac{\partial Y_0}{\partial \lambda}(\gamma(t))>0, \quad  \text{ or } \quad Y_0(\gamma(t))\land\dfrac{\partial Y_0}{\partial \lambda}(\gamma(t))<0,\]
for all $t\in[0,T]$. Hence, it follows from \eqref{19} that,
	\[\frac{\partial a_0}{\partial\lambda}(X)\neq0.\]
Therefore we conclude that $a_0$ has a non-zero directional derivative at $X$ and thus it is not constant. In special, the polynomial
	\[P(Y,x)=x^n+a_{n-1}(Y)x^{n-1}+\dots+a_1(Y)x+a_0(Y),\]
is not constant in $Y$. From now on, we denote $P(Y)=P(Y,\cdot)$. Given $Y\in N$, it follows from the strong structural stability of $X$ that $Y$ has a limit cycle $\gamma(Y)$ near $\gamma$. In special, if $\gamma(Y)$ is a non-hyperbolic limit cycle of $Y$, then $P(Y)$ has a multiple root. Let $\Delta\colon\mathbb{R}^{n+1}\to\mathbb{R}$ be the discriminant of polynomials of degree $n$ (see Chapter 12 of \cite{GelKapZel1994}). Observe if $\gamma(Y)$ is a non-hyperbolic limit cycle of $Y$, then $\Delta(P(Y))=0$. Since $\Delta\circ P$ is non-constant, it follows from Whitney's Stratification (see Section~\ref{sub5}) that the set $\{Y\in N\colon \Delta(P(Y))=0\}$ is given by the disjoint union of analytical manifolds of codimension at least one, each of them having at most a finite number of connected components. In special, it has zero Lebesgue measure. Therefore, we can take $Y$ arbitrarily close to $X$ such that $\Delta(P(Y))\neq0$. Therefore, since $n$ is even, it follows that $P(Y)$ has either zero or at least two real roots. In the former, $Y$ has no limit cycles near $\gamma$. In the latter, $Y$ has at least two hyperbolic limit cycles near $\gamma$. In either case we have a contradiction with the strong structural stability of $X$. \end{proof}

\begin{remark} Under the context of the proof of Lemma~\ref{Lemma6}, we observe that we had proved that there is $Y\in N$ such that $\Delta(P(Y))\neq0$. However, we were not able to study the sign of $\Delta(P(Y))$. We claim that if one is able to prove that $\Delta(P(Y))<0$, then the non-hyperbolic limit cycle of odd degree is also structurally unstable. Indeed, it follows from \cite{GelKapZel1994}, p. 403, that 
\begin{equation}\label{4}
	\Delta(P(Y))=(-1)^{\frac{n(n-1)}{2}}\prod_{i<j}\bigl(x_i(Y)-x_j(Y)\bigr)^2,
\end{equation}
where $\{x_1(Y),\dots,x_n(Y)\}$ are the roots of $P(Y)$. If $n$ is odd, it follows from Section~$4$ of \cite{NickDye} that the following statements hold.
\begin{enumerate}[label=(\roman*)]
	\item If $\Delta(P(Y))>0$, then the number of real roots of $P$ is congruent to $1$ modulo $4$;
	\item If $\Delta(P(Y))<0$, then the number of real roots of $P$ is congruent to $3$ modulo $4$.
\end{enumerate}
Therefore, if there is $Y\in N$ such that $\Delta(P(Y))<0$, then $Y$ has at least three hyperbolic limit cycles near $\gamma$ and thus one could conclude that non-hyperbolic limit cycles are indeed structurally unstable.
\end{remark}

\begin{remark}\label{Remark3}
	Let $X\in\mathfrak{X}_d$. If $X$ has a non-hyperbolic limit cycle $\gamma$ (whether with odd or even multiplicity), then it follows from the proof of Lemma~\ref{Lemma6} that there is a neighborhood $N\subset\mathfrak{X}_d$ of $X$ and a set $Z\subset N$, with zero Lebesgue measure, such that every $Y\in N\backslash Z$ has either no limit cycle or only hyperbolic limit cycles near $\gamma$.
\end{remark}

\begin{lemma}\label{Lemma7}
	Let $X\in\Sigma_d^s$. Then every singularity of $p(X)$ is hyperbolic.
\end{lemma}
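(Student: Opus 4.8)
The plan is to argue by contradiction, reducing to the case of a weak focus and then producing a limit cycle that cannot be transported back by a near-identity equivalence. Suppose $p(X)$ has a non-hyperbolic singularity $p$. By Lemmas~\ref{Lemma2} and \ref{Lemma3}, $p$ is simple and is not a center, so (Section~\ref{sub2}) it satisfies $\det DX(p)>0$, $T(DX(p))=0$, and is topologically equivalent to a focus; in particular its eigenvalues are purely imaginary. A singularity lying on a smooth invariant curve has a real eigenvalue (the tangent direction of the curve spans a $DX(p)$-invariant line), so $p$ lies on none of the invariant curves $\gamma_1,\gamma_2,\gamma_3,\gamma_4,\mathbb{S}^1$; hence $p=(a,b)$ is a finite point with $a(a-1)\neq0$ and $b(b-1)\neq0$, and writing $X=(P,Q)$ as in \eqref{0} we get $f(p)=g(p)=0$, exactly as in the proof of Lemma~\ref{Lemma3}. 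Since $-X\in\mathfrak{X}_d$ (replace $f,g$ by $-f,-g$), $p(-X)=-p(X)$, and the definition of strong structural stability permits reversing the direction of orbits, we have $-X\in\Sigma_d^s$; replacing $X$ by $-X$ if necessary, we may therefore assume that $p$ is a \emph{stable} focus, hence asymptotically stable for the flow of $X$.

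Next I would build a trapping region for $X$ around $p$. Since $p$ is asymptotically stable, there is a smooth strict Lyapunov function $V$ for $X$ near $p$, with $V(p)=0$, $V>0$ elsewhere and $\dot V=\nabla V\cdot X<0$ on $\{0<V<c\}$ for some $c>0$ (shrinking the domain; and shrinking $c$ further, using Lemma~\ref{Lemma1}, so that $\{V<c\}$ contains no singularity of $p(X)$ other than $p$). Put $W:=\{V<c\}$; then $W$ contains no periodic orbit of $X$, since a periodic orbit $\gamma\subset W$ with $\gamma\neq\{p\}$ would force $0=\oint_\gamma\dot V\,dt<0$. Choosing a regular value $c'\in(0,c)$ of $V$ and setting $K:=\{V\leqslant c'\}$, we obtain a compact positively invariant neighbourhood of $p$, with $K\subset W$, on whose smooth boundary $X$ is transversal and points strictly inward. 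Set $\varepsilon_0:=\mathrm{dist}(K,\mathbb{R}^2\setminus W)>0$ and apply Definition~\ref{Def00} with $\varepsilon=\varepsilon_0$ to obtain a neighbourhood $N\subset\mathfrak{X}_d$ of $X$ and the near-identity equivalences $h_Y$, $Y\in N$.

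Then I would perturb within $\mathfrak{X}_d$ using the family $Y_\lambda=(P_\lambda,Q_\lambda)$, $P_\lambda=x(x-1)(f-\lambda\varepsilon g)$, $Q_\lambda=y(y-1)(g+\lambda\varepsilon f)$, already employed in the proofs of Lemmas~\ref{Lemma3},~\ref{Lemma4} and \ref{Lemma6}. Since $f(p)=g(p)=0$, the point $p$ is a singularity of every $Y_\lambda$, and by the computation in the proof of Lemma~\ref{Lemma3} one has $T(DY_\lambda(p))=\lambda\varepsilon\,c_0$ with $c_0\neq0$, while $\det DY_\lambda(p)\to\det DX(p)>0$ as $\varepsilon\to0$. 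Choosing $\lambda\in\{-1,1\}$ with $\lambda c_0>0$ and then $\varepsilon>0$ small, the field $Y:=Y_\lambda$ lies in $N$, has $p$ as an unstable hyperbolic focus (the eigenvalues stay non-real), is still transversal to $\partial K$ pointing inward, and — by persistence of the simple singularity $p$ and compactness of $K$ — has no singularity other than $p$ in $K$. Hence $K$ is positively invariant for $Y$; the forward $Y$-orbit of a point of $\mathrm{int}\,K\setminus\{p\}$ close to $p$ stays in $K$, and its $\omega$-limit is a nonempty compact invariant subset of $K$ containing no singularity (it cannot contain $p$: an unstable focus is neither the $\omega$-limit of a nearby distinct orbit nor a vertex of a polycycle, having no hyperbolic sectors). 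By the Poincaré–Bendixson theorem this $\omega$-limit is a periodic orbit $\gamma_Y$ of $Y$, which lies in $\mathrm{int}\,K$ by the transversality of $\partial K$. Finally, $\tilde\gamma:=h_Y^{-1}(\gamma_Y)$ is a single orbit of $p(X)$ homeomorphic to a circle, hence a periodic orbit of $X$; by $(ii)$ of Definition~\ref{Def00} one has $\|h_Y(x)-x\|<\varepsilon_0$ for all $x$, so $\|h_Y^{-1}(x)-x\|<\varepsilon_0$ too, and therefore $\tilde\gamma\subset\{x:\mathrm{dist}(x,K)<\varepsilon_0\}\subset W$ (in particular $\tilde\gamma$ is disjoint from $\mathbb{S}^1$). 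This contradicts the absence of periodic orbits of $X$ in $W$, and we conclude that every singularity of $p(X)$ is hyperbolic.

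The step I expect to be the main obstacle is the construction of the trapping region: producing a small region around the weak focus $p$ whose boundary is transversal to $X$ and pointing strictly inward, and which contains no periodic orbit of $X$. Round circles do not work in general, since for a weak focus $\dot r$ may change sign on arbitrarily small circles centred at $p$; this is why one appeals to a smooth Lyapunov function for the asymptotically stable equilibrium $p$ (alternatively, one can construct the boundary by hand from an orbit arc spiralling to $p$ together with a transversal segment). A route that bypasses this step is to invoke the Hopf bifurcation theorem in the possibly-degenerate analytic setting for the family $Y_\lambda$: since the trace at $p$ varies at nonzero speed in $\lambda$ and $p$ is not a center, a hyperbolic limit cycle bifurcates from $p$ for $\lambda$ of one sign, with diameter tending to $0$ as $\lambda\to0$, which then feeds into the same contradiction.
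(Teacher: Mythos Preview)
Your proof is correct and follows the same strategy as the paper's: reduce via Lemmas~\ref{Lemma2} and~\ref{Lemma3} to a weak focus, assume it stable, perturb within $\mathfrak{X}_d$ (using the same rotation-like family from the proof of Lemma~\ref{Lemma3}) to make it an unstable hyperbolic focus, and apply Poincar\'e--Bendixson inside a trapping region to produce a periodic orbit of $Y$ that has no counterpart for $X$, contradicting strong structural stability. The one substantive difference is precisely the point you flagged: the paper simply takes a small round circle $S$ about $p$ and asserts that $X(s)$ points inward on $S$, which need not hold for a weak focus; your Lyapunov-level-set construction (or the orbit-arc-plus-transversal alternative you mention) is the honest way to obtain a transversal boundary, and your explicit use of clause~(ii) of Definition~\ref{Def00} to confine $h_Y^{-1}(\gamma_Y)$ to $W$ makes precise what the paper leaves implicit in the phrase ``one limit cycle more than $X$''.
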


\begin{proof} Suppose by contradiction that $p$ is a non-hyperbolic singularity of $p(X)$. It follows from Lemma~\ref{Lemma2} that $p$ is simple. Therefore, $p$ is either a focus or a center (see Section~\ref{sub2}). Since the infinity of $p(X)$ is invariant, it follows that $p$ is a singularity of $X$. Moreover, it follows from Lemma~\ref{Lemma3} that $p$ is a focus. Without loss of generality, suppose that $p$ is a stable focus. Let $S$ be a small circle centered at $p$ and $K$ be the bounded region delimited by $S$. Take $S$ small enough such that there are no other singularity or limit cycle of $X$ in $K$. Since $p$ is stable, it follows that for every $s\in S$, the vector $X(s)$ points towards $K$. Moreover, we can take a small neighborhood $N\subset\mathfrak{X}_d$ of $X$ such that for every $Y\in N$ and $s\in S$, the vector $Y(s)$ also points towards $K$. It follows from the proof of Lemma~\ref{Lemma3} that we can take $Y\in N$ such that $p$ is a hyperbolic unstable focus of $Y$. Hence, it follows from the Poincar\'e-Bendixson Theorem that $Y$ must have a limit cycle $\gamma\subset\text{Int}(K)$ surrounding $p$ (where $\text{Int}(K)$ denotes the topological interior of $K$). But this imply that $Y$ has one limit cycle more than $X$, contradicting its strong structural stability. \end{proof}

\begin{lemma}\label{Lemma8}
	Let $X\in\Sigma_d^s$. If $p(X)$ has a polycycle $\Omega$, then $\Omega$ is generic and $\Omega\subset\Lambda$.
\end{lemma}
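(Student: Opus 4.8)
The plan is to split the statement into two parts: first, that $\Omega \subset \Lambda$; second, that if $\Omega \subset \Lambda$ then $\Omega$ is generic. For the first part, I would argue exactly as in the proof of Lemma~\ref{Lemma4}. Recall that a polycycle $\Omega$ of $p(X)$ is a cyclic concatenation of singularities joined by regular orbits (separatrix connections). By Lemma~\ref{Lemma7} every singularity of $p(X)$ is hyperbolic, so every vertex of $\Omega$ is a hyperbolic saddle (a polycycle cannot have a node as a vertex, since a node has no regular orbit both entering and leaving it along its boundary in the required way). Hence each edge of $\Omega$ is an orbit going from a saddle to a saddle, and Lemma~\ref{Lemma4} forces each such edge to lie in $\Lambda$. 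Since the saddles themselves are limits of these edges and $\Lambda$ is closed and invariant, we conclude $\Omega \subset \Lambda$.

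For the second part, suppose toward a contradiction that $\Omega \subset \Lambda$ is a polycycle with $r(\Omega) = 1$, i.e. $\Omega$ is \emph{not} generic; say its vertices are the hyperbolic saddles $p_1,\dots,p_n$ with eigenvalues $\mu_i < 0 < \nu_i$ and hyperbolicity ratios $r_i = |\mu_i|/\nu_i$, so $\prod_i r_i = 1$. The idea is to use the same rotation-type perturbation $Y_\lambda = (P_\lambda, Q_\lambda)$ employed in Lemmas~\ref{Lemma4} and~\ref{Lemma6}, namely
\[
P_\lambda(x,y) = x(x-1)\bigl(f(x,y) - \lambda\varepsilon g(x,y)\bigr), \quad Q_\lambda(x,y) = y(y-1)\bigl(g(x,y) + \lambda\varepsilon f(x,y)\bigr),
\]
which stays in $\mathfrak{X}_d$ and keeps $\Lambda$ invariant. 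Since each edge of $\Omega$ lies in $\Lambda$ and $\Lambda$ is invariant under every $Y_\lambda$, the polycycle $\Omega$ itself \emph{persists} as a polycycle of $p(Y_\lambda)$ for all $\lambda$ (its edges are pinned inside $\Lambda$). However, the perturbation changes the eigenvalues at the saddles, hence the hyperbolicity ratios, hence $r(\Omega)$. The plan is to show that $\frac{d}{d\lambda}\, r(\Omega)(Y_\lambda)\big|_{\lambda=0} \neq 0$, or at least that $r(\Omega)(Y_\lambda)$ is non-constant near $\lambda = 0$, by computing the first-order variation of the trace (equivalently of $\mu_i + \nu_i$) and of the determinant at each $p_i$ under the perturbation — this is the same linear-algebra computation already carried out in \eqref{11}--\eqref{14} in the proof of Lemma~\ref{Lemma3}, showing the trace variation does not vanish because the off-diagonal entries $\frac{\partial f}{\partial y}(p_i)$, $\frac{\partial g}{\partial x}(p_i)$ are controlled by the saddle condition $\det DX(p_i) < 0$. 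Consequently, for arbitrarily small $|\lambda| > 0$ we get $r(\Omega)(Y_\lambda) \neq 1$; by Cherkas's criterion (quoted before Definition~\ref{Def00}) the now-generic polycycle is either stable or unstable, and in either case a periodic orbit either appears near $\Omega$ or a pre-existing accumulating periodic orbit disappears, changing the number of limit cycles and contradicting the strong structural stability of $X$ (one also needs $\Omega$ to be accumulated by orbits on at least one side, which holds since $p(X)$ has no strip-only neighborhoods of $\Omega$ — or, more carefully, one invokes Lemma~\ref{Lemma5}/\ref{Lemma6}-style reasoning on the appearing cycle).

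The main obstacle I anticipate is the last step: making rigorous that perturbing $r(\Omega)$ away from $1$ genuinely creates or destroys a limit cycle detectable by the strong equivalence homeomorphism. Cherkas's theorem tells us a generic polycycle is one-sided stable/unstable but does not by itself produce a nearby limit cycle; one must combine it with a return-map argument (the Poincaré return map near $\Omega$ has a derivative governed by $\prod r_i$, so crossing the value $1$ flips the contraction/expansion and, together with the behavior of $X$ further from $\Omega$, yields a cycle via Poincaré--Bendixson). I would handle this by taking a transversal through a point of an edge of $\Omega$, noting that on the far side the return map of $X$ near this loop already has a definite character (since $X$ has only finitely many cycles and $\Omega$ is an $\alpha$- or $\omega$-limit of some orbit by statement $(d')$-type reasoning), and showing the sign change of $r(\Omega) - 1$ forces a bifurcating hyperbolic cycle, contradicting $X \in \Sigma_d^s$. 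A secondary technical point is verifying that the perturbation $Y_\lambda$ does not accidentally destroy a vertex of $\Omega$ or alter which separatrices connect; but this follows because all saddles are hyperbolic (Lemma~\ref{Lemma7}) hence persist, and the connecting edges are trapped in the invariant set $\Lambda$ and therefore cannot break.
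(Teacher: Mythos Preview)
Your first part, reducing $\Omega\subset\Lambda$ to Lemma~\ref{Lemma4}, matches the paper. The second part, however, has a genuine gap and also diverges from the paper's route.

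The gap is your appeal to the computations \eqref{11}--\eqref{14} of Lemma~\ref{Lemma3}. Those computations concern singularities $p_i$ at which $f(p_i)=g(p_i)=0$ and $\det DX(p_i)>0$; the whole argument there hinges on the off-diagonal entries of $DX(p_i)$ and on the positivity of the determinant. But the vertices of $\Omega$ are the four corners $(0,0),(1,0),(1,1),(0,1)$ of the unit square, at which $f$ and $g$ do \emph{not} vanish and $DX$ is diagonal (e.g.\ $DX(0,0)=\mathrm{diag}\bigl(-f(0,0),-g(0,0)\bigr)$). So neither the hypotheses nor the mechanism of Lemma~\ref{Lemma3} apply, and your claimed non-vanishing of $\tfrac{d}{d\lambda}r(\Omega)(Y_\lambda)\big|_{\lambda=0}$ via ``the same linear-algebra computation'' is unjustified as written. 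Relatedly, you never identify $\Omega$ concretely, and without that you have no formula for $r(\Omega)$ to differentiate.

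The paper proceeds differently. It first observes that, since the poles of $\mathbb S^1$ are never hyperbolic saddles, any polycycle in $\Lambda$ must be the boundary of the unit square; this yields the explicit formula
\[
r(\Omega)=\frac{g(0,0)\,f(1,0)\,g(1,1)\,f(0,1)}{f(0,0)\,g(1,0)\,f(1,1)\,g(0,1)}.
\]
It then perturbs a \emph{single coefficient} (for $d\geqslant 2$, the coefficient $b_{11}$ of $g$, which moves $g(1,1)$ alone; for $d=1$, a direct manipulation of the explicit quotient) to push $r(\Omega)$ off $1$ in either direction. Finally it splits into two cases: if $\Omega$ is not accumulated by limit cycles, flipping its stability creates a new limit cycle (as in Lemma~\ref{Lemma7}); if $\Omega$ \emph{is} accumulated by infinitely many limit cycles, making $\Omega$ generic forces the perturbed system to have only finitely many, again contradicting strong structural stability. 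Your sketch does not separate these cases and leaves the bifurcation step vague; the paper's case split is what makes the final contradiction clean.
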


\begin{proof} Let $\Omega$ be a polycycle of $p(X)$. It follows from Lemma~\ref{Lemma4} that $\Omega\subset\Lambda$. Since the north, south, east and west poles of the infinity of $p(X)$ are not hyperbolic saddles, it follows that $\Omega$ is the boundary of the square
	\[\{(x,y)\in\mathbb{R}^2\colon 0\leqslant x\leqslant1,\;0\leqslant y\leqslant 1\}.\]
Without loss of generality, suppose that $\Omega$ has the counterclockwise orientation. Therefore, it follows that,
\begin{equation}\label{21}
	r(\Omega)=\frac{g(0,0)f(1,0)g(1,1)f(0,1)}{f(0,0)g(1,0)f(1,1)g(0,1)}.
\end{equation}
Suppose by contradiction that $r(\Omega)=1$. Suppose first that $\Omega$ is not accumulated by limit cycles. Hence, it is either stable or unstable. Suppose that $\Omega$ is stable. Similarly to the proof of Lemma~\ref{Lemma7}, we claim that we can take $Y$ arbitrarily close to $X$ such that $r(\Omega)<1$, i.e., such that $\Omega$ is unstable. Therefore, similarly to the proof of Lemma~\ref{Lemma7}, it follows that $Y$ has one limit cycle more than $X$, contradicting the strong structural stability of $X$. The desired perturbation is given as follows. Suppose first that $d\geqslant2$ and let
	\[g(x,y)=\sum_{i,j\geqslant0}^{d}b_{ij}x^iy^j.\]
Observe that an arbitrarily small perturbation at the coefficient $b_{11}$ will change $g(1,1)$, without changing $g(0,0)$, $g(1,0)$ or $g(0,1)$. Therefore, if $r(\Omega)=1$, we can use this perturbation to obtain $r(\Omega)>1$ or $r(\Omega)<1$. Suppose now $d=1$. We claim that
\begin{equation}\label{6}
	\frac{g(0,0)g(1,1)}{g(1,0)g(0,1)}>0, \quad \frac{f(1,0)f(0,1)}{f(0,0)f(1,1)}>0.
\end{equation}
Indeed, let $X=(P,Q)$ be given by \eqref{0} and let 
	\[p_1=(0,0), \quad p_2=(1,0), \quad p_3=(1,1), \quad p_4=(0,1),\]
be the hyperbolic saddles of $\Omega$. Since $\Omega$ has the counterclockwise orientation, it follows that 
	\[P(x,0)>0, \quad P(x,1)<0, \quad Q(0,y)<0, \quad Q(1,y)>0,\]
for all $0<x<1$ and $0<y<1$. Hence, we have
\begin{equation}\label{20}
	f(x,0)<0, \quad f(x,1)>0, \quad g(0,y)>0, \quad g(1,y)<0,
\end{equation}
for all $0<x<1$ and $0<y<1$. Since $p_i$ is hyperbolic, it follows that $f(p_i)\neq0$ and $g(p_i)\neq0$, $i\in\{1,2,3,4\}$. Therefore, it follows from \eqref{20} that
	\[f(0,0)<0, \quad f(1,0)<0, \quad f(1,1)>0, \quad f(0,1)>0,\]
	\[g(0,0)>0, \quad g(1,0)<0, \quad g(1,1)<0, \quad g(0,1)>0,\]
and thus we have \eqref{6}. Let
	\[f(x,y)=a_{00}+a_{10}x+a_{01}y, \quad g(x,y)=b_{00}+b_{10}x+b_{01}y.\]
It follows from \eqref{21} that,	
\begin{equation}\label{22}
	r(\Omega)=\frac{g(0,0)g(1,1)}{g(1,0)g(0,1)}\frac{f(1,0)f(0,1)}{f(0,0)f(1,1)} = \frac{b_{00}(b_{00}+b_{10}+b_{01})}{(b_{00}+b_{10})(b_{00}+b_{01})}\frac{(a_{00}+a_{10})(a_{00}+a_{01})}{a_{00}(a_{00}+a_{10}+a_{01})}.
\end{equation}
To simplify the notation, let
	\[a=\frac{(a_{00}+a_{10})(a_{00}+a_{01})}{a_{00}(a_{00}+a_{10}+a_{01})}, \quad b=\frac{(b_{00}+b_{10})(b_{00}+b_{01})}{b_{00}(b_{00}+b_{10}+b_{01})}.\]
It follows from \eqref{6} and \eqref{22} that $a>0$, $b>0$ and $r(\Omega)=a/b$. Therefore, we have that $r(\Omega)\geqslant 1$ if, and only if, $a\geqslant b$, with the equality holding if, and only if, $a=b$. Observe that,
	\[\begin{array}{l}
		\displaystyle \frac{(a_{00}+a_{10})(a_{00}+a_{01})}{a_{00}(a_{00}+a_{10}+a_{01})}=1+\frac{a_{10}a_{01}}{a_{00}(a_{00}+a_{10}+a_{01})}, \vspace{0.2cm} \\ \displaystyle \frac{(b_{00}+b_{10})(b_{00}+b_{01})}{b_{00}(b_{00}+b_{10}+b_{01})}=1+\frac{b_{10}b_{01}}{b_{00}(b_{00}+b_{10}+b_{01})}.
	\end{array}\]
Hence, it follows that $r(\Omega)\geqslant1$ if, and only if,
\begin{equation}\label{23}
	\frac{a_{10}a_{01}}{a_{00}(a_{00}+a_{10}+a_{01})}\geqslant\frac{b_{10}b_{01}}{b_{00}(b_{00}+b_{10}+b_{01})},
\end{equation}
with $r(\Omega)=1$ if, and only if, the equality of \eqref{23} holds. Therefore, it is clear that slight perturbations at $a_{00}$ or $b_{00}$ can make $r(\Omega)>1$ or $r(\Omega)<1$, as desired. Suppose now that $\Omega=\Omega(X)$ is accumulated by limit cycles of $X$. In particular, it follows that $p(X)$ has infinitely many limit cycles. It follows from the previous part of the proof that we can take $Y$ arbitrarily close to $X$ such that $r(\Omega)>1$. In particular, $\Omega=\Omega(Y)$ is stable and thus cannot be accumulated by limit cycles of $Y$. We claim that $p(Y)$ has at most finitely many limit cycles. Indeed, suppose by contradiction that $p(Y)$ has infinitely many limit cycles $\{\gamma_n\}_{n\geqslant 1}$. Since $\mathbb{S}^2$ is compact, it follows that $\{\gamma_n\}$ must accumulate somewhere. More precisely, it follows from \cite{Perko1987} that it must accumulate either on a singularity or in a polycycle. However, it follows from Lemma~\ref{Lemma7} that every singularity of $p(X)$ is hyperbolic. In particular, every singularity of $p(Y)$ is hyperbolic and its unique polycycle is generic. Hence, neither the singularities nor the polycycle can be accumulated by limit cycles and thus $p(Y)$ has finitely many limit cycles, contradicting the strong structural stability of $X$. \end{proof}

\begin{remark}\label{Remark4}
	Let $D_3\subset\mathfrak{X}_d$ be the family of vector fields satisfy conditions $(a')$, $(c)$ and $(d')$ stated in Section~\ref{Sec2}. Then it follows from Remark~\ref{Remark2} and from the proof of Lemma~\ref{Lemma8} that $D_3$ is open and dense in $\mathfrak{X}_d$.
\end{remark}

\begin{lemma}\label{Lemma0}
	Let $X\in\Sigma_d^s$. Then $p(X)$ has a finite number of limit cycles.
\end{lemma}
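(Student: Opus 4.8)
The plan is to argue by contradiction. Suppose $p(X)$ has infinitely many limit cycles $\{\gamma_n\}_{n\geqslant1}$; I want to show that this is incompatible with Lemmas~\ref{Lemma5}, \ref{Lemma7} and \ref{Lemma8}, which have already pinned down the only objects that periodic orbits could conceivably accumulate on.

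First I would use that $\mathbb{S}^2$ is compact together with the accumulation result of \cite{Perko1987} (the same one invoked in the proof of Lemma~\ref{Lemma8}) to pass to a subsequence of pairwise distinct limit cycles converging, in the Hausdorff metric, to a compact connected invariant set $\Gamma\subset\mathbb{S}^2$ which is necessarily a singularity, a closed orbit, or a polycycle.

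Then I would eliminate each possibility in turn. If $\Gamma$ is a singularity, Lemma~\ref{Lemma7} says it is hyperbolic, and near a hyperbolic singularity the flow is topologically equivalent to its linear part, which has no periodic orbit; this contradicts that the $\gamma_n$ accumulate on $\Gamma$. If $\Gamma$ is a closed orbit, Lemma~\ref{Lemma5} says it is a limit cycle, hence isolated in the set of periodic orbits of $p(X)$, again contradicting the accumulation of the $\gamma_n$. If $\Gamma$ is a polycycle, Lemma~\ref{Lemma8} says it is generic, i.e.\ $r(\Gamma)\neq1$; and, as observed in Section~\ref{Sec2} (following Cherkas \cite{Cherkas}), a generic polycycle cannot be accumulated by periodic orbits, a final contradiction. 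Hence $p(X)$ has only finitely many limit cycles.

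The single delicate point, which I expect to be the main obstacle, is the first step: I must make sure the cited result really delivers the clean trichotomy for a sequence of (possibly wildly nested) limit cycles of a polynomial vector field on the Poincar\'e sphere, and that the limiting set is connected, invariant, and of one of the three listed kinds — in particular that accumulation ``at infinity'' falls into the same classification, since $\mathbb{S}^1$ itself is not a polycycle (its poles are not hyperbolic saddles, as noted in the proof of Lemma~\ref{Lemma8}). Once that is settled, the rest is an immediate appeal to the earlier lemmas, so the argument is short.
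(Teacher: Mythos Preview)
Your proposal is correct and follows essentially the same argument as the paper: assume infinitely many limit cycles, invoke \cite{Perko1987} to force accumulation on a singularity or a polycycle, and then rule these out via Lemmas~\ref{Lemma7} and \ref{Lemma8}. The only difference is that you also treat the case where the limiting object is a closed orbit (handled via Lemma~\ref{Lemma5}), whereas the paper omits this case, relying on Perko's result to give only the singularity/polycycle dichotomy directly; your caution about whether that trichotomy is delivered cleanly on $\mathbb{S}^2$ is reasonable, but the paper simply cites \cite{Perko1987} without further comment.
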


\begin{proof} Similarly to the last part of the proof of Lemma~\ref{Lemma8}, suppose by contradiction that $p(X)$ has infinitely many limit cycles $\{\gamma_n\}_{n\geqslant 1}$. Since $\mathbb{S}^2$ is compact, it follows that $\{\gamma_n\}$ must accumulate somewhere. More precisely, it follows from \cite{Perko1987} that it must accumulate either on a singularity or in a polycycle. However, it follows from Lemmas~\ref{Lemma7} and \ref{Lemma8} that every singularity of $p(X)$ is hyperbolic and every polycycle, if any, is generic. Hence, it cannot be accumulated by limit cycles. \end{proof}

\begin{remark}\label{Remark5}
	It follows from Remark~\ref{Remark3} that if $X\in\mathfrak{X}_d$ as a non-hyperbolic limit cycle $\gamma$, then there is $Y\in\mathfrak{X}_d$ arbitrarily close to $X$ such that $Y$ has only hyperbolic limit cycles near $\gamma$. Moreover, it follows from Lemma~\ref{Lemma0} that $X$ has at most a finite number of limit cycles. Hence, it follows that there is $Y\in\mathfrak{X}_d$ arbitrarily close to $X$ having only hyperbolic limit cycles. Therefore, if we let $D_4\subset\mathfrak{X}_d$ be the family of vector fields satisfy conditions $(a')$, $(b')$, $(c)$ and $(d')$ stated in Section~\ref{Sec2}, then it follows from Remarks~\ref{Remark3}, \ref{Remark4} and from Lemma~\ref{Lemma0} that $D_4$ is open and dense in $\mathfrak{X}_d$. In particular, $\mathcal{P}_d$ is open and dense in $\mathfrak{X}_d$.
\end{remark}

\section{Sufficient conditions for structural stability}\label{Sec5}

In what follows, we recall that $\mathcal{P}_d\subset\mathfrak{X}_d$ denote the family of vector fields $X$ satisfying the following statements.
\begin{enumerate}
	\item[$(a')$] $p(X)$ have a finite number of singularities, all hyperbolic;
	\item[$(b')$] $p(X)$ have a finite number of closed orbits, all hyperbolic;
	\item[$(c)$] If $\gamma$ is a connection between saddles, then $\gamma\subset\Lambda$;
	\item[$(d')$] The $\alpha$ and $\omega$-limits of every orbit of $p(X)$ is either a singularity, a closed orbit or a generic polycycle contained in $\Lambda$.
\end{enumerate}

\begin{lemma}[Lemma~$1$ of \cite{PeiPei1959}]\label{Lemma9}
	Let $X\in\mathcal{P}_d$ and $p$ be a singularity of $p(X)$. There are neighborhoods $D\subset\mathbb{R}^2$ of $p$ and $N\subset\mathfrak{X}_d$ of $X$ such that for every $Y\in N$, there is only one singularity $p_Y$ of $p(Y)$ inside $D$. Moreover, $p$ and $p_Y$ is of the same type. Furthermore, if $p$ is not a saddle, then we can take $D$ such that its boundary has no contact with $p(X)$.
\end{lemma}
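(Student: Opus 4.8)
The plan is to reduce the whole statement to one application of the implicit function theorem, using that $p$ is hyperbolic (this is condition $(a')$, since $X\in\mathcal{P}_d$), hence a nondegenerate zero of $p(X)$, and that $p(Y)$ depends polynomially on the coefficient vector $\varphi(Y)\in\mathbb{R}^{(d+1)(d+2)}$. First I would fix a local chart at $p$: if $p$ is a finite singularity the chart is $\mathbb{R}^2$ with coordinates $(x,y)$, in which $p(Y)$ is just $Y=(R,S)$; if $p$ lies on $\mathbb{S}^1$ it is one of the charts $U_i$, $V_i$ of Section~\ref{sub1}, in which $p(Y)$ is given by the polynomial expressions recalled there, whose coefficients are linear in the coefficients of $R,S$ and hence polynomial in $\varphi(Y)$. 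In either case I would package the local expression of $p(Y)$ near $p$ as a map $\Phi\colon U\times\mathfrak{X}_d\to\mathbb{R}^2$, $(z,Y)\mapsto\Phi(z,Y)$, which is jointly analytic, satisfies $\Phi(p,X)=0$, and whose partial derivative $\partial_z\Phi(p,X)$ is the Jacobian matrix of $p(X)$ at $p$. Since $p$ is hyperbolic it is simple (Section~\ref{sub2}), so $\partial_z\Phi(p,X)$ is invertible, and the implicit function theorem produces neighborhoods $D\subset U$ of $p$ and $N\subset\mathfrak{X}_d$ of $X$ together with an analytic map $Y\mapsto p_Y$, $p_X=p$, such that on $D\times N$ the equation $\Phi(z,Y)=0$ is equivalent to $z=p_Y$; this gives exactly one singularity $p_Y$ of $p(Y)$ in $D$ for every $Y\in N$, and shrinking $D$ inside the chart already excludes the finitely many other singularities of $p(X)$.

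Next I would argue that $p$ and $p_Y$ have the same type. The Jacobian matrix of $p(Y)$ at $p_Y$ is $\partial_z\Phi(p_Y,Y)$, and by continuity of $\Phi$ and of $Y\mapsto p_Y$ it converges to $\partial_z\Phi(p,X)$ as $Y\to X$. The three mutually exclusive conditions distinguishing the hyperbolic types in Section~\ref{sub2} (namely $\det<0$; $\det>0$ with positive trace; $\det>0$ with negative trace) are open, so after shrinking $N$ the matrix $\partial_z\Phi(p_Y,Y)$ lies in the same one as $\partial_z\Phi(p,X)$, i.e. $p_Y$ is hyperbolic of the same type as $p$ (saddle, repeller, or attractor). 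Finally, if $p$ is not a saddle, say an attractor (the repeller case being symmetric), I would use a quadratic Lyapunov function $V$ for the linear part of $p(X)$ at $p$: one has $\langle\nabla V,p(X)\rangle<0$ off $p$ on a neighborhood of $p$, so every sufficiently small circle about $p$ in the chart is crossed transversally by $p(X)$; taking $D$ to be the open disk bounded by such a circle yields $\partial D$ with no contact with $p(X)$.

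I expect the only delicate point to be the bookkeeping at infinity: making sure that near a singularity on $\mathbb{S}^1$ the local expression of $p(Y)$ is genuinely an analytic map of the chart coordinates and of $\varphi(Y)$, and that the local uniqueness in the implicit function theorem really does forbid spurious singularities of $p(Y)$ appearing off $\mathbb{S}^1$ near $p$. Once the joint analyticity of $\Phi$ and the nondegeneracy $\det\partial_z\Phi(p,X)\neq 0$ are recorded, however, this is not a serious obstacle, since everything then follows from a single application of the implicit function theorem.
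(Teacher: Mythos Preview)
The paper does not supply its own proof of this lemma; it simply quotes it as Lemma~1 of \cite{PeiPei1959}. Your argument via the implicit function theorem applied to the joint map $\Phi(z,Y)$, together with the openness of the sign conditions on $\det$ and trace, is the standard persistence argument for hyperbolic singularities and is correct in this setting; it is almost certainly what the Peixotos' original proof does as well.

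One small slip in the last part: from $\langle\nabla V,p(X)\rangle<0$ near $p$ you may conclude that the \emph{level curves of $V$} are crossed transversally by $p(X)$, not that arbitrary small circles centred at $p$ are. For a node with eigenvalues of different magnitude, round circles need not be without contact. Take $D$ to be a small sublevel set $\{V<c\}$ instead; its boundary is then an ellipse transversal to $p(X)$, which is what the statement requires.
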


\begin{lemma}[Lemma~$2$ of \cite{PeiPei1959}]\label{Lemma10}
	Let $X\in\mathcal{P}_d$ and $\gamma$ be a limit cycle of $p(X)$. There are an open ring $S\subset\mathbb{R}^2$ bounded by two closed curves $C$ and $C'$, with $\gamma\subset S$, and a neighborhood $N\subset\mathfrak{X}_d$ of $X$ such that for every $Y\in N$, the following statements holds.
	\begin{enumerate}[label=(\alph*)]
		\item The segments of the normals to $\gamma$ contained in $S$ are disjoint and $p(Y)$ has no contact relatively to these segments and to $C$ and $C'$;
		\item $p(Y)$ has exactly one limit cycle $\gamma_Y$ inside $S$. Moreover, $\gamma_Y$ and $\gamma$ have the same stability.
	\end{enumerate}
\end{lemma}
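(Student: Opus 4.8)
The plan is to reduce the statement to the classical persistence theory of \emph{hyperbolic} limit cycles, the only extra point being that the trapping ring $S$ must be arranged so that it is also swept out by the normal segments to $\gamma$ with no contact; this is exactly the content of Lemma~$2$ of \cite{PeiPei1959}, and I would follow that construction. First I note that, since the infinity $\mathbb{S}^1$ of $p(X)$ carries at least four singularities, the closed orbit $\gamma$ cannot be $\mathbb{S}^1$, so it is a periodic orbit of $X$ lying in $\mathbb{R}^2$. By hypothesis $(b')$ it is hyperbolic; reversing the time direction if necessary, I assume it is \emph{stable}, so the Poincar\'e first-return map of $p(X)$ on any transversal section has derivative strictly less than $1$ at its fixed point. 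A hyperbolic stable cycle admits arbitrarily thin trapping annular neighborhoods, so I choose closed curves $C$ and $C'$ near $\gamma$, one on each side, transverse to $p(X)$ and with the flow of $p(X)$ crossing both of them \emph{into} the ring $S$ that they bound. Shrinking toward $\gamma$ I may further assume that (i) $\overline{S}$ contains no singularity of $p(X)$ (possible by $(a')$, since $\gamma$ carries none and there are only finitely many), and (ii) the segments of the normal lines to $\gamma$ that lie in $S$ are pairwise disjoint and $p(X)$ is transverse to every one of them: this uses that $\gamma$ is an analytically embedded circle and that along $\gamma$ the vector $p(X)$ is tangent to $\gamma$, hence orthogonal to the normals, so transversality is inherited on a neighborhood by continuity and compactness of $\gamma$. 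This already gives statement $(a)$ for $Y=X$.

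Next I pass to nearby fields. The set $\overline{S}$ is compact, $p(X)$ has no zero on it, and $p(X)$ is transverse to $C$, to $C'$, and to each normal segment in $S$; since all of these are open conditions on a compact set, there is a neighborhood $N\subset\mathfrak{X}_d$ of $X$ such that for every $Y\in N$ the field $p(Y)$ has no zero in $\overline{S}$ and is still transverse to $C$, to $C'$ (with the flow still entering $S$), and to every normal segment contained in $S$. This proves $(a)$.

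For statement $(b)$, fix $Y\in N$. By the previous paragraph $S$ is a bounded, positively invariant annular region of $p(Y)$ containing no singularity, so by the Poincar\'e--Bendixson Theorem (Section~$3.7$ of \cite{Perko2001}; note that a periodic orbit contained in $S$ cannot bound a disk in $S$, by Remark~\ref{Remark0}, hence is non-contractible in $S$) the $\omega$-limit of every orbit of $p(Y)$ in $S$ is a periodic orbit contained in $S$; in particular $p(Y)$ has at least one limit cycle $\gamma_Y\subset S$. To obtain uniqueness and the stability assertion, I work with the first-return map $\pi_Y\colon\ell\to\ell$ of $p(Y)$ on a fixed normal segment $\ell$: it is well defined because $S$ is invariant and singularity-free (so every orbit through $\ell$ returns to $\ell$), its fixed points are exactly the limit cycles in $S$ (each of which crosses $\ell$ by the non-contractibility above), and by continuous dependence of solutions on initial conditions and parameters (Theorem~$4$, p.~$92$ of \cite{Perko2001}) it depends $C^1$-continuously on $Y$. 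Hyperbolicity and stability of $\gamma$ give $\pi_X'<1$ at its fixed point, hence $\pi_X'<1$ on all of $\ell$ after one more shrinking, and then $\pi_Y'<1$ on $\ell$ for every $Y\in N$ after shrinking $N$; therefore $n\mapsto\pi_Y(n)-n$ is strictly decreasing and has \emph{exactly one} zero, which by the existence part above corresponds to the unique limit cycle $\gamma_Y$. Since $\pi_Y'<1$ there, $\gamma_Y$ is stable, i.e.\ of the same stability as $\gamma$; the unstable case follows by reversing time.

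I do not expect a genuine obstacle here: for a \emph{hyperbolic} limit cycle this is the classical persistence statement, and the displacement-function argument makes uniqueness and stability automatic. The only point requiring care is coordinating the two demands of $(a)$ — that $S$ be a trapping ring for the flow across $C$ and $C'$, and simultaneously that the normal segments of $\gamma$ sitting inside $S$ be pairwise disjoint and transverse to the flow — which is exactly the construction carried out in Lemma~$2$ of \cite{PeiPei1959}; one may simply invoke that lemma, checking only that $S$ can be taken inside $\mathbb{R}^2$, which holds because $\gamma\subset\mathbb{R}^2$.
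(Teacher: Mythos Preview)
The paper does not prove this lemma at all: it is stated verbatim as Lemma~$2$ of \cite{PeiPei1959} and invoked without further argument, so there is no ``paper's own proof'' to compare against. Your sketch is correct and is precisely the classical Peixoto--Peixoto argument the citation points to: build a thin trapping annulus foliated by normal segments transverse to the flow, pass to nearby $Y$ by compactness, get existence from Poincar\'e--Bendixson, and get uniqueness and stability from the contraction $\pi_Y'<1$ of the first-return map on a fixed normal section. One small point worth making explicit is that $\pi_Y'<1$ on all of $\ell$ rules out not only extra fixed points but all periodic points of $\pi_Y$ (a strict contraction on an interval has no nontrivial periodic orbits), so no limit cycle in $S$ can hit $\ell$ in a higher-period cycle of $\pi_Y$; otherwise your argument is complete.
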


\begin{definition}\label{Def1}
	Let $X\in\mathcal{P}_d$ and $p_j$, $j\in\{1\dots,n\}$, and $\gamma_k$, $k\in\{1\dots,m\}$ be the singularities and limit cycles of $p(X)$. The regions $D_j$ and $S_k$ given by Lemmas~\ref{Lemma9} and \ref{Lemma10} are called the critical regions associated with $p_j$ and $\gamma_k$.
\end{definition}

\begin{lemma}[Lemma~$3$ of \cite{PeiPei1959}]\label{Lemma11}
	Let $X\in\mathcal{P}_d$. There is a neighborhood $N\subset\mathfrak{X}_d$ of $X$ such that if $Y\in N$, then each singularity of $p(Y)$ is interior to some $D_j$, $j\in\{1,\dots,n\}$, and each limit cycle $\gamma$ is interior to some $S_k$, $k\in\{1,\dots,m\}$.
\end{lemma}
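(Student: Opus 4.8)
This is the counterpart, in the coefficients topology on $\mathfrak{X}_d$, of Lemma~$3$ of \cite{PeiPei1959}, and the plan is to recover it from compactness of $\mathbb{S}^2$, the Poincar\'e--Bendixson theorem, and continuous dependence of $p(Y)$ on $Y$. The last point deserves a word: by the chart formulas of Section~\ref{sub1} the coefficients of $p(Y)$ in each chart are polynomial — hence continuous — in those of $Y$, so $p(Y)\to p(X)$ in the $C^\infty$ topology uniformly on $\mathbb{S}^2$ as $Y\to X$ in $\mathfrak{X}_d$, and consequently solutions of $p(Y)$ depend continuously on $Y$. Fix $X\in\mathcal{P}_d$, with singularities $p_1,\dots,p_n$ and limit cycles $\gamma_1,\dots,\gamma_m$ (finitely many, by $(a')$ and $(b')$). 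Using Lemmas~\ref{Lemma9} and~\ref{Lemma10} I would choose the critical regions $D_j$ small (topological disks) and the rings $S_k$ thin, so that $\mathrm{int}\,D_1,\dots,\mathrm{int}\,D_n$, $\mathrm{int}\,S_1,\dots,\mathrm{int}\,S_m$ are pairwise disjoint; then $\mathbb{S}^2$ is partitioned into these open sets and the compact set $W=\mathbb{S}^2\setminus\bigl(\bigcup_j\mathrm{int}\,D_j\cup\bigcup_k\mathrm{int}\,S_k\bigr)$. The singularities are dealt with at once: $p(X)$ does not vanish on the compact set $\mathbb{S}^2\setminus\bigcup_j\mathrm{int}\,D_j$, so neither does $p(Y)$ for $Y$ in a neighbourhood $N_1$ of $X$, and with Lemma~\ref{Lemma9} (exactly one singularity of $p(Y)$ inside each $D_j$) this shows that every singularity of $p(Y)$ lies in some $D_j$ for $Y\in N_1$.

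For closed orbits, I would shrink $N_1$ to a neighbourhood $N_2$ of $X$ contained in all the neighbourhoods furnished by Lemmas~\ref{Lemma9} and~\ref{Lemma10} for the chosen $D_j$ and $S_k$, and let $\delta$ be a closed orbit of $p(Y)$ with $Y\in N_2$. As $\delta$ is connected and $\mathbb{S}^2$ is partitioned as above, $\delta$ lies entirely in some $\mathrm{int}\,S_k$, or in some $\mathrm{int}\,D_j$, or in $W$. If $\delta\subset\mathrm{int}\,S_k$, then $\delta$ is a limit cycle of $p(Y)$ inside $S_k$, hence $\delta=\gamma_Y^{(k)}$ by Lemma~\ref{Lemma10}(b), and $\gamma_Y^{(k)}\subset\mathrm{int}\,S_k$ because $\partial S_k$ has no contact with $p(Y)$; this is the desired conclusion. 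If $\delta\subset\mathrm{int}\,D_j$, then $\delta$ bounds, inside the disk $\mathrm{int}\,D_j$, a disk $\Delta$; by Remark~\ref{Remark0} $\Delta$ contains a singularity of $p(Y)$, which can only be $p_Y^{(j)}$, and by Proposition~\ref{P3} its index equals $i(p(Y),\delta)=1$ — impossible if $p_j$ is a saddle (index $-1$); and if $p_j$ is a node or a focus, then $p_Y^{(j)}$ is again a hyperbolic node or focus (Lemma~\ref{Lemma9}), which has a neighbourhood with no periodic orbit, so this case is excluded too once $D_j$ is small enough. Hence $\delta\subset W$, and it remains to show that, after possibly shrinking $N_2$ once more, $p(Y)$ has no closed orbit inside $W$.

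This is where the three ingredients combine. The set $W$ contains no singularity of $p(X)$ and, by Lemma~\ref{Lemma10}, no closed orbit of $p(X)$; so by the Poincar\'e--Bendixson theorem the $\omega$--limit (resp.\ $\alpha$--limit) of any orbit of $p(X)$ confined to $W$ would be a closed orbit contained in $W$, of which there are none, and therefore no orbit of $p(X)$ remains in $W$ for all forward, or all backward, time. By compactness of $W$ there is then $T_0>0$ bounding the time-length of every orbit arc of $p(X)$ contained in $W$; and by the uniform convergence $p(Y)\to p(X)$ together with continuous dependence of solutions on the field, there is a neighbourhood $N_3\subset N_2$ of $X$ and a constant $T_1>0$ such that, for $Y\in N_3$, every orbit arc of $p(Y)$ contained in $W$ has time-length at most $T_1$. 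Since a closed orbit of $p(Y)$ inside $W$ would produce arbitrarily long such arcs, $p(Y)$ has no closed orbit in $W$ for $Y\in N_3$, and $N=N_3$ is the required neighbourhood. The points needing care — the only places where anything beyond \cite{PeiPei1959} enters — are arranging that Lemmas~\ref{Lemma9} and~\ref{Lemma10} still apply after the regions $D_j$, $S_k$ are shrunk to be pairwise disjoint, and transferring the uniform escape time from $p(X)$ to the perturbations $p(Y)$, which is precisely where continuity of $Y\mapsto p(Y)$ in the coefficients topology is invoked; the rest follows the proof in \cite{PeiPei1959} verbatim.
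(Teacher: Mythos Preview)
The paper does not supply its own proof of this lemma; it simply attributes it to Lemma~$3$ of \cite{PeiPei1959} and uses it as a black box, so there is no in-paper argument to compare against. Your reconstruction is along the right lines --- the treatment of singularities via compactness and the escape-time argument on $W$ are both sound --- but the ``partition'' step has a genuine gap.

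You assert that, because $\delta$ is connected and $\mathbb{S}^2$ is partitioned into the open sets $\mathrm{int}\,D_j$, $\mathrm{int}\,S_k$ and the closed complement $W$, the closed orbit $\delta$ must lie entirely in one piece. Connectedness alone does not give this: a connected set can certainly straddle an open set and its closed complement. The honest obstruction would be transversality of the boundaries, but Lemma~\ref{Lemma9} guarantees $\partial D_j$ has no contact with $p(Y)$ only when $p_j$ is \emph{not} a saddle. For a saddle region $D_j$ nothing prevents a periodic orbit $\delta$ of $p(Y)$ from entering and leaving $D_j$, so your three cases are not exhaustive: $\delta$ may lie in $W\cup\bigcup_{p_j\ \text{saddle}}\overline{D_j}$ and thread through several saddle neighbourhoods. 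Your escape-time bound on $W$ does not exclude this, since an orbit passing near a hyperbolic saddle can linger arbitrarily long inside the corresponding $D_j$; and this is exactly the configuration that arises when $X$ has a generic polycycle $\Omega\subset\Lambda$ (allowed by condition $(d')$), near which a putative closed orbit of $p(Y)$ would sit.

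The repair is to argue the trichotomy dynamically rather than topologically. For each $S_k$ and each non-saddle $D_j$, the boundary is transversal to $p(Y)$ with a definite orientation, so a periodic orbit meeting $\mathrm{int}\,S_k$ or a non-saddle $\mathrm{int}\,D_j$ is trapped there and your first two cases go through. For the residual case $\delta\subset W\cup\bigcup_{\text{saddle}}\overline{D_j}$ one needs an extra ingredient: either a Hausdorff-limit argument (if $Y_n\to X$ carry such closed orbits $\delta_n$, their accumulation is a compact invariant set of $p(X)$ meeting no $\mathrm{int}\,S_k$, forcing it to contain a singularity or to be the polycycle $\Omega$, and then the hyperbolicity of the saddles together with genericity $r(\Omega)\neq 1$ rules out nearby closed orbits), or, equivalently, the critical region for $\Omega$ provided by Lemma~\ref{Lemma12}. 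Either route closes the gap; as written, the proof does not.
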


\begin{remark}
	We can choose the regions $D_j$ small enough such that if $p_j$ is a saddle, then the only separatrices which intersect $D_j$ are the four separatrices associated to $p_j$, except if $p_j$ lies in some generic polycycle $\Omega\subset\Lambda$.
\end{remark}

As discussed in the previous section, in our context we have to deal with the \emph{persistent} polycycle $\Omega\subset\Lambda$ given by the boundary of the square,
	\[\Delta=\{(x,y)\in\mathbb{R}^2\colon 0\leqslant x\leqslant1,\;0\leqslant y\leqslant 1\}.\]
To deal with this, in the next lemma we develop a tool similarly to Lemmas~\ref{Lemma9} and \ref{Lemma10}.

\begin{lemma}\label{Lemma12}
	Let $X\in\mathcal{P}_d$ be such that the boundary of the square
		\[\Delta=\{(x,y)\in\mathbb{R}^2\colon 0\leqslant x\leqslant1,\;0\leqslant y\leqslant 1\},\]
	is a generic polycycle of $\Omega$ of $X$. There are a neighborhood $N\subset\mathfrak{X}_d$ of $X$ and a closed curve $S\subset\text{Int}(\Delta)$ such that if $K$ is the bounded region limited by $\Omega$ and $S$, then for each $Y\in N$, the following statements holds.
	\begin{enumerate}[label=(\alph*)]
		\item There is no singularity of $Y$ in $K$, except by the hyperbolic saddles of $\Omega$;
		\item There is no limit cycle of $Y$ in $K$;
		\item $Y$ has no contact relatively to $S$;
		\item If $r(\Omega)>1$, then $Y(s)$ points towards $K$, for every $s\in S$;
		\item If $r(\Omega)<1$, then $Y(s)$ points in opposite direction to $K$, for every $s\in S$;
	\end{enumerate}
\end{lemma}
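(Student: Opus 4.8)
The plan is to treat $\Omega$ as a generic polycycle and build the inner boundary $S$ out of pieces transverse to $X$, using the fact that $r(\Omega)\neq 1$ forces a definite (expanding or contracting) holonomy along $\Omega$. First I would fix small transversal sections at the four saddles $p_1,\dots,p_4$ of $\Omega$, say an incoming section $\sigma_i^{\mathrm{in}}$ and an outgoing section $\sigma_i^{\mathrm{out}}$ near each $p_i$, chosen inside $\mathrm{Int}(\Delta)$. Along each edge of $\Omega$ (a segment of one of the lines $x=0$, $x=1$, $y=0$, $y=1$) the flow of $X$ in $\mathrm{Int}(\Delta)$ carries $\sigma_i^{\mathrm{out}}$ into $\sigma_{i+1}^{\mathrm{in}}$ by a diffeomorphism; composing these four regular transition maps with the four local Dulac (corner) maps of the hyperbolic saddles gives the return map $\pi$ on a single section $\sigma\subset\mathrm{Int}(\Delta)$ near $\Omega$. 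The Dulac map at a hyperbolic saddle with hyperbolicity ratio $r_i$ has the form $u\mapsto c_i u^{r_i}(1+o(1))$, so $\pi(u)=c\,u^{r(\Omega)}(1+o(1))$ with $r(\Omega)=r_1r_2r_3r_4$; because $r(\Omega)\neq 1$ this is a strict contraction (if $r(\Omega)>1$) or a strict expansion (if $r(\Omega)<1$) near $u=0$, as in Cherkas' theorem quoted in the excerpt.

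Next I would extract from this the inner curve $S$ and its persistence. Pick $u^*>0$ small enough that $\pi$ is defined and strictly monotone on $(0,u^*]$ with the above expansion/contraction estimate, and that the corresponding orbit arc does not meet any singularity other than the $p_i$ nor any limit cycle (possible since, by $(a')$ and $(b')$, singularities and limit cycles are finite and $\Omega$, being generic, is isolated from periodic orbits — see the remark after Cherkas in the excerpt). Then take $S$ to be the closed curve built from: the short orbit arc from the point $u^*\in\sigma$ around $\Omega$ back to $\pi(u^*)\in\sigma$, closed up by the subsegment of $\sigma$ between $\pi(u^*)$ and $u^*$. Along the orbit arc $X$ is of course tangent to $S$, which is not what we want; so instead I would use the standard trick of replacing this by a curve built from segments of the transversals $\sigma_i^{\mathrm{in}},\sigma_i^{\mathrm{out}}$ joined by short orbit arcs, then smoothing the corners slightly outward (resp. inward) so that the resulting closed curve $S$ is everywhere transverse to $X$ and, by the strict inequality in the holonomy estimate, is crossed by the flow of $X$ in one fixed direction: inward toward $K$ when $r(\Omega)>1$ (the contracting case, $\Omega$ stable from inside) and outward when $r(\Omega)<1$. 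This gives $(c)$, $(d)$, $(e)$ for $X$ itself; $(a)$ and $(b)$ for $X$ hold because $K$ can be taken thin enough to contain no singularity other than the $p_i$ (finitely many singularities) and no limit cycle (finitely many, and none accumulating on $\Omega$).

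Finally I would pass to a neighborhood $N$ of $X$ in $\mathfrak{X}_d$. Transversality of $X$ to the compact curve $S$ is an open condition in the $C^1$ sense, and since the coefficients topology on $\mathfrak{X}_d$ is finer than $C^1$ convergence on the compact set $K$, there is a neighborhood $N$ such that every $Y\in N$ is still transverse to $S$ and crosses it in the same direction, giving $(c)$, $(d)$, $(e)$ for all $Y\in N$. For $(a)$: the four saddles persist as hyperbolic saddles for $Y\in N$ by Lemma~\ref{Lemma9}, they remain on $\Lambda$ (since $\Lambda$ is invariant for every element of $\mathfrak{X}_d$), and any further singularity of $Y$ in $K$ would have to appear near $\partial K$; shrinking $N$ and $K$ rules this out by continuity of the coefficients, exactly as in Lemma~\ref{Lemma11}. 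For $(b)$: a limit cycle of $Y$ inside $K$ would, by the Poincaré--Bendixson theorem, have a singularity in its interior, forcing it to surround one of the $p_i$, which is impossible since those lie on the invariant lines bounding $\Delta$; alternatively, transversality of $Y$ to $S$ confines any such cycle to the region between $S$ and $\Omega$, where the definite-sign holonomy estimate (which also persists for $Y\in N$, the constants depending continuously on $Y$) leaves no room for a periodic orbit. The main obstacle I anticipate is the construction in the second paragraph: making a single closed curve $S$ that is simultaneously transverse to $X$ all the way around a polycycle through hyperbolic saddles, near the corners where the flow is nearly tangent to both bounding lines — one has to choose the transversal segments and the smoothing so that the outward/inward pointing is genuinely uniform, and this is where the strict inequality $r(\Omega)\neq 1$ must be used quantitatively rather than just qualitatively.
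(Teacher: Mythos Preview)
Your outline is reasonable and your arguments for items (a), (b) and for the passage to a neighborhood $N$ are essentially the paper's. But you have correctly flagged, and not resolved, the one real issue: the construction of the closed curve $S$. Your first attempt (an orbit arc of $X$ closed by a transversal segment) fails for the reason you note; your fallback of patching transversal segments to orbit arcs and then ``smoothing the corners'' can in principle be made to work, but it is genuinely delicate near the four saddles and you have not carried it out. As written this is a gap.

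The paper sidesteps exactly this difficulty with a rotated-field trick, and it is worth knowing. One takes the auxiliary field $Z\in\mathfrak{X}_d$ obtained from $X$ by $f\mapsto f-\varepsilon g$, $g\mapsto g+\varepsilon f$; then $Z$ still has $\Omega$ as a polycycle with $r(\Omega_Z)>1$ for small $\varepsilon$, and one sets $S=\gamma\cup J$, where $\gamma$ is a single orbit arc of $Z$ from a point $q$ on a transversal $\ell$ to its first return $q_1$, and $J\subset\ell$ is the short segment joining $q_1$ to $q$. The point is that $X\wedge Z=\varepsilon\,x(x-1)y(y-1)(f^2+g^2)$ has constant sign on $\mathrm{Int}(\Delta)$ away from singularities, so $X$ is \emph{automatically} transverse to every orbit of $Z$ there; hence $X$ crosses $\gamma$ (and trivially $J$) toward $K$, with no corner analysis needed. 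The orbit of $Z$ spirals past the saddles in finite time without ever reaching them, so the Dulac expansion you set up is not required beyond the qualitative stability of $\Omega$. Compactness of $S$ then propagates the transversality to all $Y$ in a neighborhood $N$, exactly as you argue. Your approach would eventually get there, but the paper's device turns the hard step into a one-line computation.
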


\begin{proof} Since $\Omega$ is generic, it follows that $r(\Omega)\neq1$. Suppose $r(\Omega)>1$. Then, there is an open ring $A$ containing $\Omega$ such that for every $x\in A\cap\text{Int}(\Delta)$, the orbit of $X$ through $x$ goes to $\Omega$. In special, there are no limit cycles or singularities of $X$ in $A\cap\text{Int}(\Delta)$. Therefore, we can take a sufficiently small neighborhood $N\subset\mathfrak{X}_d$ of $X$ such that for every $Y\in N$, there are no limit cycles or singularities of $Y$ in $A\cap\text{Int}(\Delta)$. Moreover, since $r(\Omega)$ depends continuously on the parameters of $X$, it follows that we can take $N$ small enough such that for every $Y\in N$ we also have $r(\Omega_Y)>1$. If $X=(P,Q)$ is given by \eqref{0}, then let $Z=(R,S)$ be given by
	\[R(x,y)=x(x-1)\bigl(f(x,y)-\varepsilon g(x,y)\bigr), \quad S(x,y)=y(y-1)\bigl(g(x,y)+\varepsilon f(x,y)\bigr).\]
If $\varepsilon>0$ is sufficiently small, then $Z\in N$ and thus $r(\Omega_Z)>1$. Observe that $Z$ is a small counterclockwise rotation of $X$. Since $r(\Omega_Z)>1$, it follows that there exists an open ring $A_Z$ containing $\Omega_Z$ such that for every $x\in A_Z\cap\text{Int}(\Delta)$, the orbit of $Z$ through $x$ goes to $\Omega_Z$. Let $\ell$ be a small transversal section to $\Omega_Z$ and let $\Omega_Z\cap\ell=\{p\}$. Let $\xi$ be a coordinate system over $\ell$ such that $\xi(p)=0$ and $\xi(q)>0$ if $q\in\ell\cap\text{Int}(\Delta)$. Let $P\colon\ell\to\ell$ be the return map associated to $\Omega_Z$. Given $q\in\ell\cap\text{Int}(\Delta)$, let $q_1=P(q)$ and observe that $\xi(q_1)<\xi(q)$. Let $\gamma$ be the orbit of $Z$ from $q$ to $q_1$. Let also $J\subset\ell$ be the segment between $q$ and $q_1$. See Figure~\ref{Fig4}.
\begin{figure}[h]
	\begin{center}
		\begin{overpic}[height=5cm]{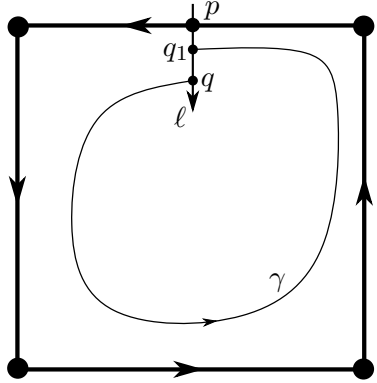} 
			\put(52,97){$p$}
			\put(51,78){$q$}
			\put(41,86){$q_1$}
			\put(44,67){$\ell$}
			\put(69,25){$\gamma$}
		\end{overpic}
	\end{center}
	\caption{An illustration of the closed curve $S$.}\label{Fig4}
\end{figure}
Let $S=\gamma\cup J$ and observe that $S$ is a continuous simple closed curve. Let also $K$ be the bounded region limited by $\Omega$ and $S$. Since $Z$ is a small counterclockwise rotation of $X$, it is clear that given any $s\in\gamma$, the orbit of $X$ through $s$ is transversal to $\gamma$ and points towards $K$. Moreover, given $s\in J$, is it s clear that the orbit of $X$ through $s$ is also transversal to $J$ and points towards $K$. Therefore, since $S$ is compact, it follows that there are a neighborhood $N\subset\mathfrak{X}_d$ of $X$ such that if $Y\in N$, then for any $s\in S$, the orbit of $Y$ through $s$ is transversal to $S$ and points towards $K$. The case $r(\Omega)<1$ follows by reversing the time variable. \end{proof}

\begin{remark}\label{Remark9}
	Similar to Definition~\ref{Def1}, if $\Omega$ is a generic polycycle of $X$, then we say that the region $K$ given by Lemma~\ref{Lemma12} is the critical region associated to $\Omega$.
\end{remark}

In the next Lemma we deal with the $\omega$ and $\alpha$-limits of the separatrices of the hyperbolic saddles of $p(X)$. As stated before, in this paper we also have to deal with some persistent saddle connections and polycycles. In particular, in the second part of the proof of the following lemma we deal with persistent saddles connection allowing hyperbolic saddles as structurally stable $\omega$ and $\alpha$-limits of other hyperbolic saddles. See Figure~\ref{Fig1}.

\begin{lemma}\label{Lemma13}
	Let $X\in\mathcal{P}_d$ and $p$ be a saddle of $p(X)$ to which the separatrices $\gamma_1$, $\gamma_3$ tend when $t\to\infty$ and $\gamma_2$, $\gamma_4$ when $t\to-\infty$. Let $\gamma_1$, $\gamma_3$ come from $\alpha_1$ and $\alpha_3$ and $\gamma_2$, $\gamma_4$ goes to $\omega_2$ and $\omega_4$. There is a neighborhood $N\subset\mathfrak{X}_d$ of $X$ such that if $Y\in N$, then the separatrices $(\gamma_1)_Y$, $(\gamma_3)_Y$ associated to the saddle $p_Y$ come from $(\alpha_1)_Y$ and $(\alpha_3)_Y$, while $(\gamma_2)_Y$, $(\gamma_4)_Y$ goes to $(\omega_2)_Y$ and $(\omega_4)_Y$, respectively.
\end{lemma}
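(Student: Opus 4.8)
The plan is to run the classical Peixoto--Peixoto scheme for the persistence of the $\alpha$- and $\omega$-limits of the separatrices of a hyperbolic saddle, paying attention to the structures that $\Lambda$ forces to persist. First I would use Lemmas~\ref{Lemma9}, \ref{Lemma10} and \ref{Lemma11} to fix a neighborhood $N\subset\mathfrak{X}_d$ of $X$ together with the critical regions $D_j$ and $S_k$ associated with the finitely many singularities $p_j$ and limit cycles of $p(X)$, and, in case $\partial\Delta$ is a (necessarily generic) polycycle $\Omega$ of $p(X)$, also the critical region $K$ of Lemma~\ref{Lemma12}; shrinking $N$, I may assume that for every $Y\in N$ each singularity of $p(Y)$ lies in some $D_j$, each limit cycle in some $S_k$, and $p(Y)$ has no contact with the inner boundary of $K$. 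It suffices to prove the statement for the unstable separatrix $\gamma_2$ with $\omega(\gamma_2)=\omega_2$, since the other three separatrices are reduced to this case by relabeling and by the change of variable $t\mapsto-t$, which sends $\mathfrak{X}_d$ (and $\mathcal{P}_d$) to itself while reversing all orbits of $p(X)$.

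I would then split into two cases. If $\gamma_2\subset\Lambda$, then $\gamma_2$ is an orbit of the restriction of $p(X)$ to one of the one-dimensional invariant curves $\gamma_1,\dots,\gamma_4,\mathbb{S}^1$ making up $\Lambda$, and these curves are invariant for \emph{every} $Y\in\mathfrak{X}_d$. On such a curve the flow is scalar; since $X\in\mathcal{P}_d$ its singularities on that curve are simple roots of the restricted field, so they vary continuously with $Y$ and, after shrinking $N$, stay finite in number and in the same cyclic order, while the sign of the restricted field on each arc between consecutive singularities is preserved. Because $\omega_2$ is the first singularity reached by $\gamma_2$ along this curve, the corresponding arc for $Y$ is exactly $(\gamma_2)_Y$ and has $\omega$-limit $(\omega_2)_Y$; when $\omega_2$ is itself a saddle this is precisely the persistent saddle connection allowed by condition~$(c)$.

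If instead $\gamma_2\not\subset\Lambda$, then $\gamma_2\cap\Lambda=\emptyset$ and, by Lemma~\ref{Lemma4}, $\omega_2$ is not a saddle; hence $\omega_2$ is an attracting node or focus, an attracting hyperbolic limit cycle, or the generic polycycle $\Omega=\partial\Delta$, attracting from the side containing $\gamma_2$. Let $R$ be the associated critical region ($D_j$, $S_k$, or $K$). Since $\omega(\gamma_2)=\omega_2$, pick $t_0$ with $x_0:=\gamma_2(t_0)$ interior to $R$. The arc of $\gamma_2$ from $p$ to $x_0$ is shadowed, for $Y$ close to $X$, by a uniformly nearby arc of $(\gamma_2)_Y$ from $p_Y$ to a point near $x_0$: near the saddle this is the continuous dependence of the unstable manifold (Stable Manifold Theorem, as in the proof of Lemma~\ref{Lemma4}), and along the remaining compact arc it is continuous dependence on initial conditions and parameters, exactly as around \eqref{17}. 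Thus $(\gamma_2)_Y$ enters $R$, which is positively invariant under $p(Y)$ for all $Y\in N$ (the no-contact property of $\partial D_j$ and $\partial S_k$ from Lemmas~\ref{Lemma9} and \ref{Lemma10}, or Lemma~\ref{Lemma12}$(d)$ together with the invariance of $\Omega\subset\Lambda$). Hence $\omega((\gamma_2)_Y)\subset R$, and the Poincar\'e--Bendixson theorem inside $R$, combined with the uniqueness assertions of Lemmas~\ref{Lemma9}, \ref{Lemma10} and \ref{Lemma12}, pins down $\omega((\gamma_2)_Y)=(\omega_2)_Y$.

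The step I expect to be the main obstacle is the polycycle subcase: I must rule out that an orbit trapped inside $K$ has as $\omega$-limit a single one of the saddles of $\Omega_Y$, or a proper sub-graphic, instead of $\Omega_Y$ itself. Here I would invoke $r(\Omega_Y)>1$ (so $\Omega_Y$ is stable, by Cherkas~\cite{Cherkas}), the absence in $K$ of limit cycles and of singularities other than the vertices of $\Omega_Y$, and the special feature of the model that a saddle at a corner of $\Delta$ has all four of its separatrices contained in $\Lambda$ (the two invariant lines through that corner are its eigendirections), so that no orbit interior to $K$ can limit onto such a saddle; any hypothetical sub-graphic would then be a closed loop made of arcs of $\Lambda$ and would have to coincide with $\partial\Delta=\Omega_Y$. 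A saddle lying in the interior of an edge of $\partial\Delta$ is, by definition, one of the vertices of $\Omega_Y$, so it does not enlarge the list of possibilities. The remaining verifications — the finiteness and ordering of singularities along each invariant curve and the shadowing estimates — are routine adaptations of the Peixoto--Peixoto arguments.
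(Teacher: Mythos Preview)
Your proof is correct and follows essentially the same two-case scheme as the paper: the paper splits on whether the limit $\omega_2$ is a saddle (using the invariance of $\Lambda$ if so, critical regions plus shadowing if not), while you split on whether $\gamma_2\subset\Lambda$, which is an equivalent decomposition given condition~$(c)$ of $\mathcal{P}_d$. One small correction: in your second case you should invoke condition~$(c)$ of $\mathcal{P}_d$ directly rather than Lemma~\ref{Lemma4}, since the latter is stated for $X\in\Sigma_d^s$ rather than $X\in\mathcal{P}_d$.
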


\begin{proof} To simplify the approach, let $\gamma$ be a separatrix of $p$ such that $p$ is the $\alpha$-limit of $\gamma$. Let also $\omega_0$ be the $\omega$-limit of $\gamma$. Suppose first that $\omega_0$ is not a hyperbolic saddle. Hence, $\omega_0$ is a hyperbolic focus/node, a hyperbolic limit cycle or the generic polycycle $\Omega$ given by the boundary of the region,
	\[\Delta=\{(x,y)\in\mathbb{R}^2\colon 0\leqslant x\leqslant1,\;0\leqslant y\leqslant 1\}.\]
In any case, it follows from Lemmas~\ref{Lemma9}, \ref{Lemma10} and \ref{Lemma12} that there are neighborhoods $N\subset\mathfrak{X}_d$ of $X$ and $D\subset\mathbb{R}^2$ of $\omega_0$ such that $D$ is a well defined critical region associated to $\omega_0$ for every $Y\in N$. In particular, $(\omega_0)_Y\in D$ for every $Y\in N$. Restricting $N$ if necessary, it follows from Lemma~$4.3$ of \cite{Soto1974} that there is a neighborhood $W$ of $p$ such that for every $Y\in N$ the following statements holds.
\begin{enumerate}[label=(\alph*)]
	\item $Y$ has an unique singularity $p_Y\in W$, which is a hyperbolic saddle;
	\item The boundary $\partial W$ of $W$ is a differentiable curve;
	\item The stable (resp. unstable) separatrices of $p_Y$ intersect $\partial W$ in two points $s_1(Y)$ and $s_2(Y)$ (resp. $u_1(Y)$, $u_2(Y)$);
	\item The maps $s_i\colon N\to \partial W$ (resp. $u_i\colon N\to \partial W$) are of class $C^\infty$;
	\item There are closed arcs $S_i$ (resp. $U_i$) containing $s_i(Y)$ (resp. $u_i(Y))$ on which $Y$ is transversal to $\partial W$, for every $Y\in N$.
\end{enumerate}
Let $u_1(X)$ be the intersection of $\gamma$ and $W$. Since $\gamma$ goes to $\omega_0$, it follows that there is $q\in\gamma$ such that $q$ lies in the interior of $D$. Given $\varepsilon>0$, let $U_\varepsilon$ and $V_\varepsilon$ be the $\varepsilon$-neighborhoods of $u_1(X)$ and $q$. Let $\varepsilon>0$ be small enough such that $V_\varepsilon\subset D$. See Figure~\ref{Fig5}.
\begin{figure}[h]
	\begin{center}
		\begin{overpic}[height=5cm]{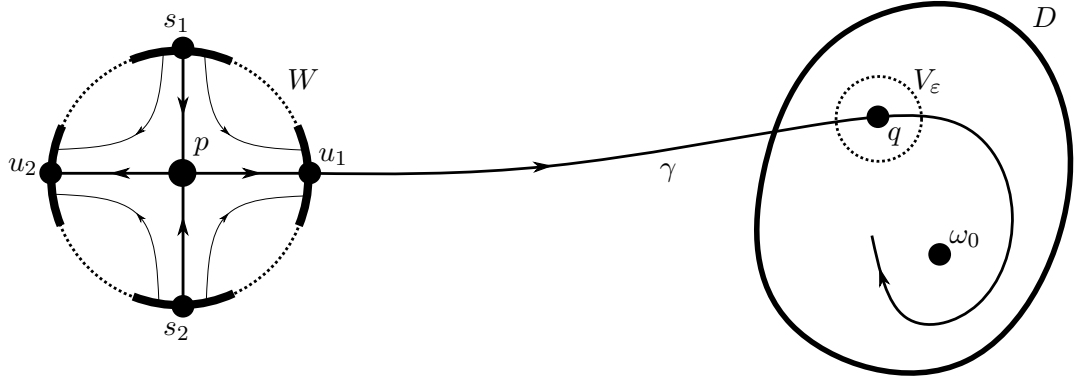} 
			\put(27,21){$u_1$}
			\put(-3,20){$u_2$}
			\put(12,34){$s_1$}
			\put(12,4){$s_2$}
			\put(15,22){$p$}
			\put(24,28){$W$}
			\put(96,34){$D$}
			\put(60,19.5){$\gamma$}
			\put(82,23){$q$}
			\put(84.5,28){$V_\varepsilon$}
			\put(88,13){$\omega_0$}
		\end{overpic}
	\end{center}
	\caption{An illustration of $\gamma$.}\label{Fig5}
\end{figure}
Since $u_1\colon N\to\partial W$ is of class $C^\infty$, it follows that restricting $N$ if necessary, we can suppose $u_1(N)\subset U_\varepsilon$. Let $\gamma(t)$ be the parametrization of $\gamma$ given by the flow of $X$ and such that $\gamma(0)=u_1(X)$. Let $t_0>0$ be such that $\gamma(t_0)=q$. Let $\gamma_Y(t)$ be the perturbation of $\gamma(t)$ associated to $Y$, i.e. $\gamma_Y(t)$ is the orbit of $Y$ passing through $u_1(Y)$ at $t=0$. Since $u_1(Y)\in U_\varepsilon$, it follows from the continuous dependence of initial conditions (see Theorem~$8$, p. 25 of \cite{And1971}) that $\gamma_Y(t_0)\in V_\varepsilon$. Hence, $\gamma_Y$ enters the critical region $D$ and thus goes to $(\omega_0)_Y$. 

Suppose now that $\omega_0=\rho$ is also a hyperbolic saddle. It follows from the definition of $\mathcal{P}_d$ that $\gamma\subset\Lambda$. In special $p$, $\rho\in\Lambda$, $p\neq\rho$ and $\gamma$ is the segment between $p$ and $q$. See Figure~\ref{Fig1}.
\begin{figure}[h]
	\begin{center}
		\begin{minipage}{4cm}
			\begin{center}
				\begin{overpic}[height=3cm]{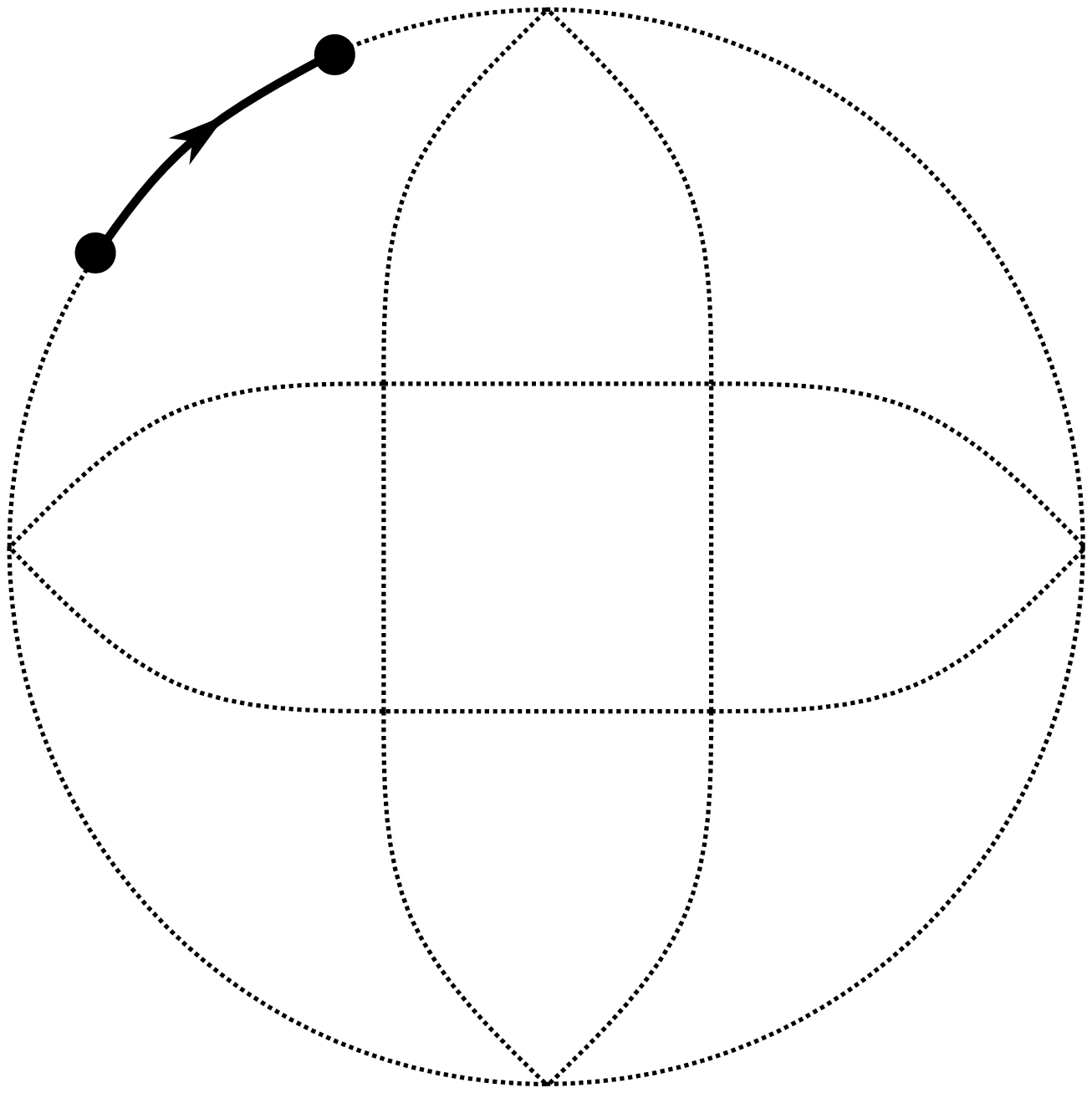} 
					\put(0,80){$p$}
					\put(25,100){$\rho$}
				\end{overpic}
			\end{center}
		\end{minipage}
		\begin{minipage}{4cm}
			\begin{center}
				\begin{overpic}[height=3cm]{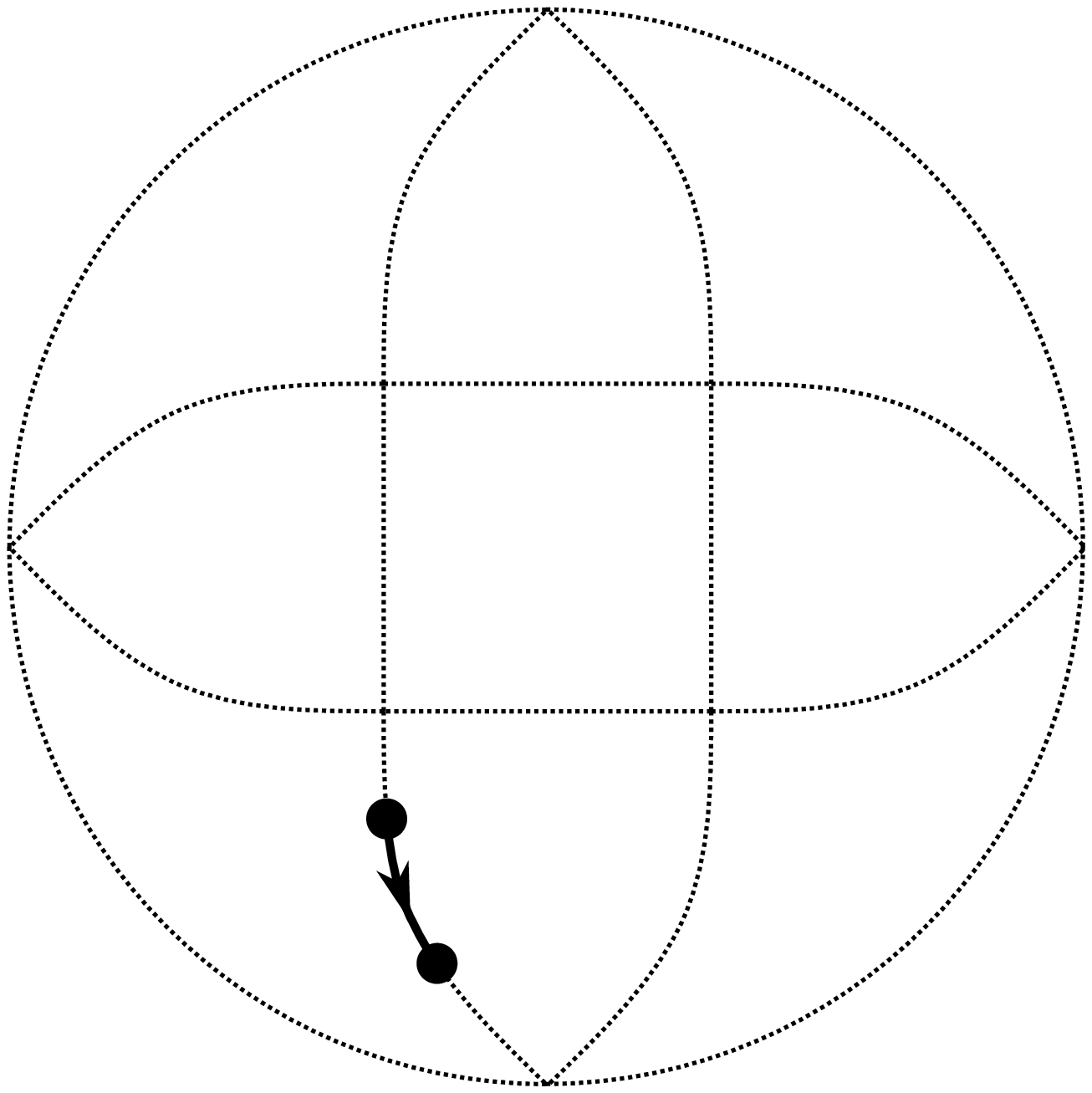} 
					\put(26,25){$p$}
					\put(29,8){$\rho$}
				\end{overpic}
			\end{center}
		\end{minipage}
		\begin{minipage}{4cm}
			\begin{center}
				\begin{overpic}[height=3cm]{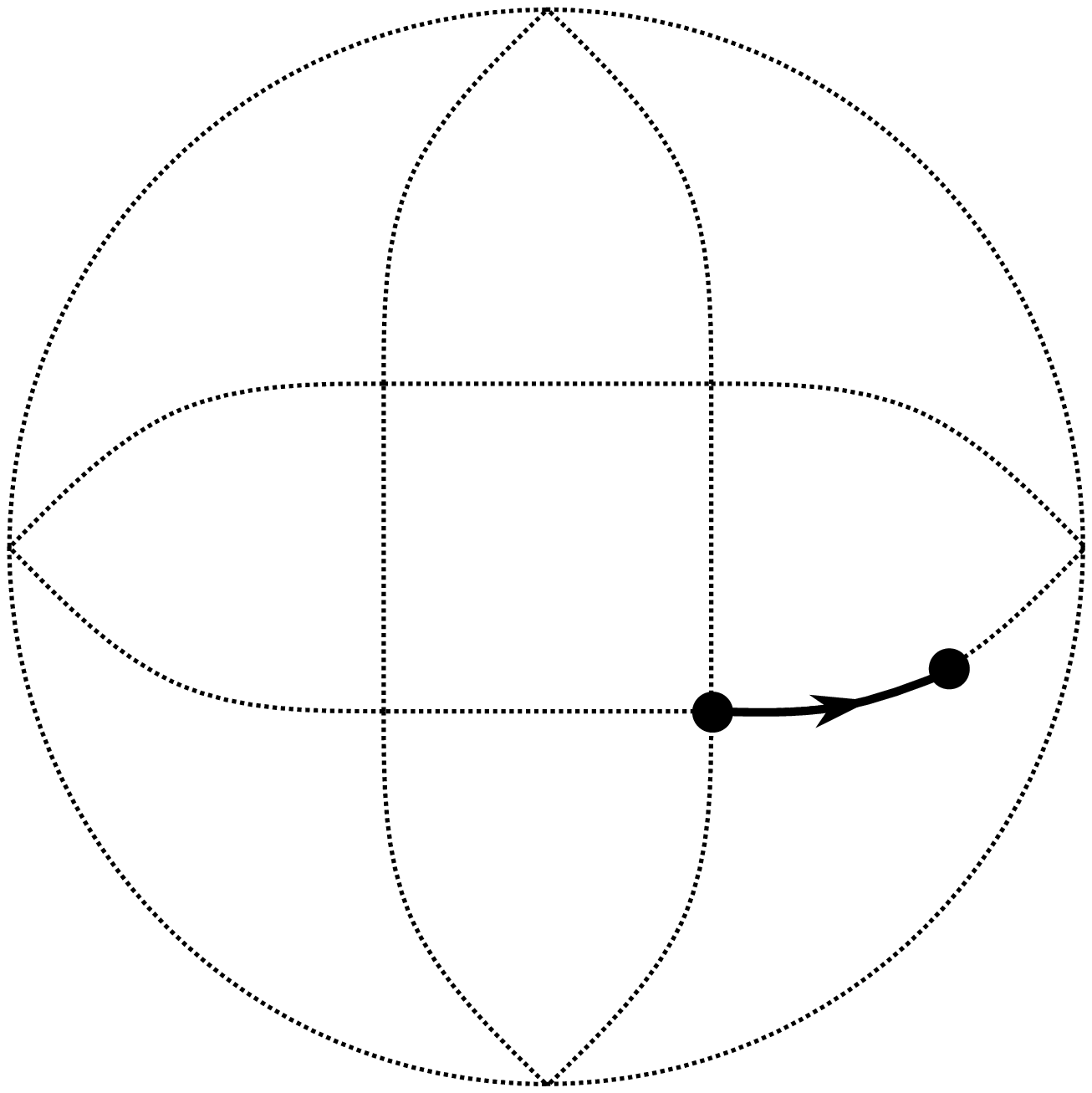} 
					\put(57,40){$p$}
					\put(85,30){$\rho$}
				\end{overpic}
			\end{center}
		\end{minipage}
		\begin{minipage}{4cm}
			\begin{center}
				\begin{overpic}[height=3cm]{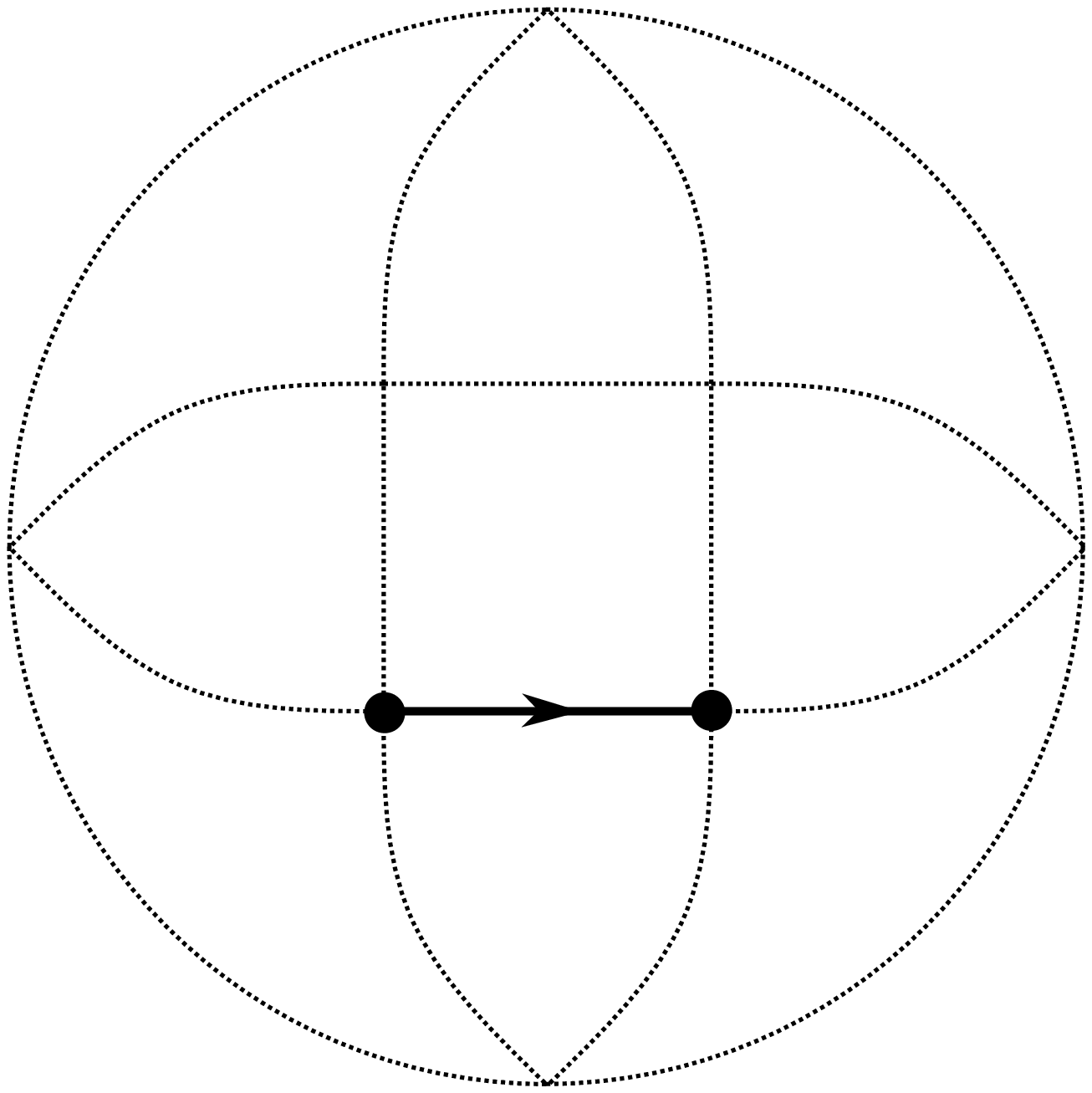} 
					\put(25,27){$p$}
					\put(57,40){$\rho$}
				\end{overpic}
			\end{center}
		\end{minipage}
	\end{center}
\caption{Some examples of the positions of $p$ and $\rho$. Remember that the north, south, east and west poles (i.e. the origins of the charts of the Poincar\'e compactification) are not hyperbolic saddles.}\label{Fig1}
\end{figure}
We claim that $p_Y$, $\rho_Y\in\Lambda$, for every $Y\in N$ (restricting $N$ if necessary). Suppose first that $p$ is a finite singularity. Hence, $p$ lies in one of the invariant straight lines $l_1,l_2,l_3,l_4$ given by $x=0$,  $x=1$, $y=0$, $y=0$. Let
	\[p_1=(0,0), \quad p_2=(1,0), \quad p_3=(1,1), \quad p_4=(0,1).\]
If $p=p_j$ for some $j\in\{1,2,3,4\}$, then it is clear $p_Y=p_j$ and thus $p_Y\in\Lambda$, $Y\in N$. Hence, suppose that $p\in l_i$, for some $i\in\{1,2,3,4\}$ and $p\neq p_j$, for every $j\in\{1,2,3,4\}$. Suppose for example that $p\in l_1$. That is, $p=(0,y_0)$, with $y_0\not\in\{0,1\}$. If $X=(P,Q)$ is given by \eqref{0}, then $g(p)=0$ and thus,
\begin{equation}\label{28}
	DX(p)=\left(\begin{array}{cc} -f(p) & 0 \vspace{0.2cm} \\ \star & \displaystyle y_0(y_0-1)\frac{\partial g}{\partial y}(p) \end{array}\right).
\end{equation}
Since $p$ is hyperbolic, it follows from \eqref{28} that $\frac{\partial g}{\partial y}(p)\neq0$. Hence, the curve $g(x,y)=0$ is transversal to $l_1$ at $p$ and thus there is a neighborhood $N$ of $X$ such that $p_Y\in l_1\subset\Lambda$, for every $Y\in N$. The case in which $p$ is a singularity at infinity follows similarly and thus the claim is proved. Therefore, $\gamma_Y\subset\Lambda$ is the segment between $p_Y$ and $\rho_Y$. In special, $\gamma_Y\to\rho_Y$ when $t\to+\infty$. The proof for the others separatrices of $p$ follows similarly. \end{proof}

\section{Proofs of the main theorems}\label{Sec6}

\noindent\textit{Proof of Theorem~\ref{Main3}:} First, observe that statement $(a)$ follows directly from Remark~\ref{Remark5}. Let us look at statement $(b)$. Given $X\in\mathcal{P}_d$, let $N\subset\mathfrak{X}_d$ be a small neighborhood of $X$ such that Lemmas~\ref{Lemma9}-\ref{Lemma13} hold. In special, observe that $N\subset\mathcal{P}_d$. Given $Y\in N$ and following the classical techniques of M. C. Peixoto and M. M. Peixoto \cite{PeiPei1959}, we can construct an homeomorphism sending each critical and canonical region of $p(X)$ to its respective region of $p(Y)$. Moreover, these local homeomorphims glue up in a global homeomorphism, creating a global homeomorphism between the orbits of $p(X)$ and $p(Y)$, preserving the equator $\mathbb{S}^1$ of the Poincar\'e sphere. In our context new types of canonical regions arises, but using also the techniques of Sotomayor \cite{Soto1974}, it follows that such homeomorphisms can be adapted to these new canonical regions. Moreover, restricting $N$ if necessary, it also follows that these family of homeomorphism is $\varepsilon$-close (in the $C^0$-way) of the identity $\text{Id}_{\mathbb{S}^2}$. For the construction of the homeomorphisms, see Appendix~\ref{Sec7}. {\hfill$\square$}

\noindent\textit{Proof of Theorem~\ref{Main2}:} Follows directly from Lemmas~\ref{Lemma1}-\ref{Lemma0}. {\hfill$\square$}

\noindent\textit{Proof of Theorem~\ref{Main1}:} Let $X\in\Sigma_d$. It follows from Theorem~\ref{Main3} that $\mathcal{P}_d$ is dense at $\mathfrak{X}_d$. Therefore, there is $Y\in\mathcal{P}_d$ such that $X$ is topologically equivalent to $Y$. Hence, $X$ satisfies all the statements of the Theorem~\ref{Main1}. {\hfill$\square$}

\noindent\textit{Proof of Theorem~\ref{Main4}:} Let $X\in\mathcal{P}_d$ and suppose that $X$ has a global first integral on $\mathbb{R}^2$. Thus, $X$ cannot have any limit cycle, source or sink. In special, it follows from Theorem~\ref{Main1} that every singularity of $X$ is topologically equivalent to a saddle. Consider the square
	\[\Delta=\{(x,y)\in\mathbb{R}^2\colon 0\leqslant x\leqslant 1, \; 0\leqslant y\leqslant 1\},\]
and let $p\in\text{Int}(\Delta)$. It follows from Poincar\'e-Bendixson Theorem that the orbit $\gamma$ through $p$ has an $\omega$ and $\alpha$-limit sets. Moreover, it follows from the first integral that such limit sets are necessarily saddle points. Thus, $\gamma\not\subset\Lambda$ is a connection between saddles, contradicting Theorem~\ref{Main1}. {\hfill$\square$}

\noindent\textit{Proof of Theorem~\ref{Main5}:} Let $X=(P,Q)$ be given by \eqref{0}. Suppose by contradiction that $\gamma$ is a non-hyperbolic algebraic limit cycle for $X$ (see Section~\ref{sub6}), with period $T>0$ and parametrization $\gamma(t)$. Let $F\colon\mathbb{R}^2\to\mathbb{R}$ be the algebraic invariant curve associated to $\gamma$. That is, $F$ is polynomial, it satisfies \eqref{24} and $\gamma\subset F^{-1}(0)$. Given $\varepsilon>0$ and $\delta_1$, $\delta_2\in\{-1,1\}$, let $Y=(R,S)$ be given by,
	\[\begin{array}{l}
		\displaystyle R(x,y)=x(x-1)\left(f(x,y)+\varepsilon\delta_1F(x,y)\frac{\partial F}{\partial x}(x,y)\right), \vspace{0.2cm} \\
		\displaystyle S(x,y)=y(y-1)\left(g(x,y)+\varepsilon\delta_2F(x,y)\frac{\partial F}{\partial y}(x,y)\right).
	\end{array}\]
Since $\gamma\subset F^{-1}(0)$, it follows that $\gamma$ is also a limit cycle for $Y$, with same period and same parametrization. Let,
	\[H(x,y)=x(x-1)\delta_1\left(\frac{\partial F}{\partial x}(x,y)\right)^2+y(y-1)\delta_2\left(\frac{\partial F}{\partial y}(x,y)\right)^2.\]
Observe that,
\begin{equation}\label{25}
	r(\gamma_Y)=\int_{0}^{T}\frac{\partial R}{\partial x}(\gamma(t))+\frac{\partial S}{\partial y}(\gamma(t))\;dt=\int_{0}^{T}\frac{\partial P}{\partial x}(\gamma(t))+\frac{\partial Q}{\partial y}(\gamma(t))\;dt+\varepsilon\int_{0}^{T} H(\gamma(t))\;dt.
\end{equation}
Since $\gamma$ is a non-hyperbolic limit cycle of $X$, it follows that $r(\gamma_X)=0$ and thus,
	\[\int_{0}^{T}\frac{\partial P}{\partial x}(\gamma(t))+\frac{\partial Q}{\partial y}(\gamma(t))\;dt=0.\]
Therefore, it follows from \eqref{25} that,
\begin{equation}\label{26}
	r(\gamma_Y)=\varepsilon\int_{0}^{T} H(\gamma(t))\;dt.
\end{equation}
Similarly to the proofs of Lemmas~\ref{Lemma4} and \ref{Lemma6}, observe that $\gamma\cap\Lambda=\emptyset$ and thus $H(\gamma(t))$ has constant sign. Moreover, observe that we can choose $\delta_1$, $\delta_2\in\{-1,1\}$ in such way that $H(\gamma(t))$ is positive or negative. Therefore, it follows similarly to the proofs of Lemmas~\ref{Lemma7} and \ref{Lemma8} that this leads to the bifurcation of a new limit cycle. Hence, $\gamma$ is not structurally stable. {\hfill$\square$}

\appendix

\section{The construction of the homeomorphisms}\label{Sec7}

In this section, we first present the techniques of M. C. Peixoto and M. M. Peixoto \cite{PeiPei1959} and Sotomayor \cite{Soto1974}, and then we present our adaptions of their techniques. Given $X\in\mathcal{P}_d$, let $N\subset\mathfrak{X}_d$ be a small neighborhood of $X$ such that Lemmas~\ref{Lemma9}-\ref{Lemma13} hold. Restricting $N$ if necessary, observe that we can assume $N\subset\mathcal{P}_d$. Given $Y\in N$, let $S$ and $S_Y$ be the set of separatrices (see Section~\ref{sub3}) of $X$ and $Y$. It follows from Lemmas~\ref{Lemma9}-\ref{Lemma13} that there is a bijection $\psi\colon S\to S_Y$ such that the following statements hold.
\begin{enumerate}[label=(\roman*)]
	\item Corresponding separatrices are of the same type;
	\item A subset of $S$ bounds a canonical region of $p(X)$ if, and only if, the corresponding subset of $S_Y$ bounds a canonical region of $Y$.
\end{enumerate}
Let $A_1,\dots A_k$ and $B_1,\dots, B_l$ be the canonical and critical regions of $p(X)$ (recall Definition~\ref{Def1}, Remark~\ref{Remark9} and Section~\ref{sub3}) and $C_1,\dots,C_k$ and $D_1,\dots,D_l$ be the respective canonical and critical regions of $p(Y)$ (i.e. $C_i=\psi(A_i)$ and $D_i=\psi(B_i)$). The technique of M. C. Peixoto and M. M. Peixoto \cite{PeiPei1959} work as follows. Given a canonical region $A_i$ of $X$, let $B_\alpha$ and $B_\omega$ be the critical regions given by the source and the sink of $A_i$ (see Section~\ref{sub3}) and let $E_i=A_i\backslash\{B_\alpha\cup B_\omega\}$. We recall that each canonical region has exactly one source and one sink (recall Proposition~\ref{Prop0}), i.e. one \emph{object} as source and one object as sink, where by object we mean a singularity, a limit cycle or, as allowed in our context, a hyperbolic saddle or a polycycle. Let also $D_\alpha$ and $D_\beta$ be the critical regions given by the source and the sink of the canonical region $C_i=\psi(A_i)$ and consider $F_i=C_i\backslash\{D_\alpha\cup D_\omega\}$. The regions $E_i$ and $F_i$ are called \emph{generic regions}. See Figures~\ref{Fig6} and \ref{Fig7}.
\begin{figure}[h]
	\begin{center}
		\begin{minipage}{3cm}
			\begin{center}
				\begin{overpic}[height=2cm]{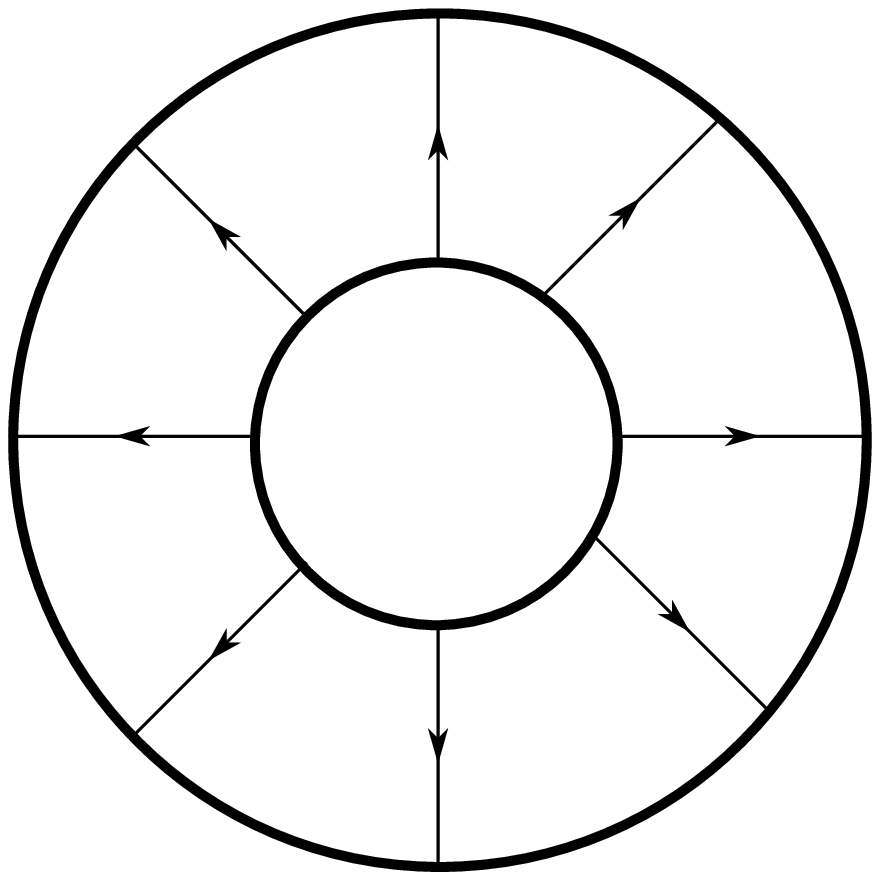} 
					\put(45,45){$\alpha$}
					\put(88,88){$\omega$}
				\end{overpic}
				
				Type $1$.
			\end{center}
		\end{minipage}
		\begin{minipage}{3cm}
			\begin{center}
				\begin{overpic}[height=2cm]{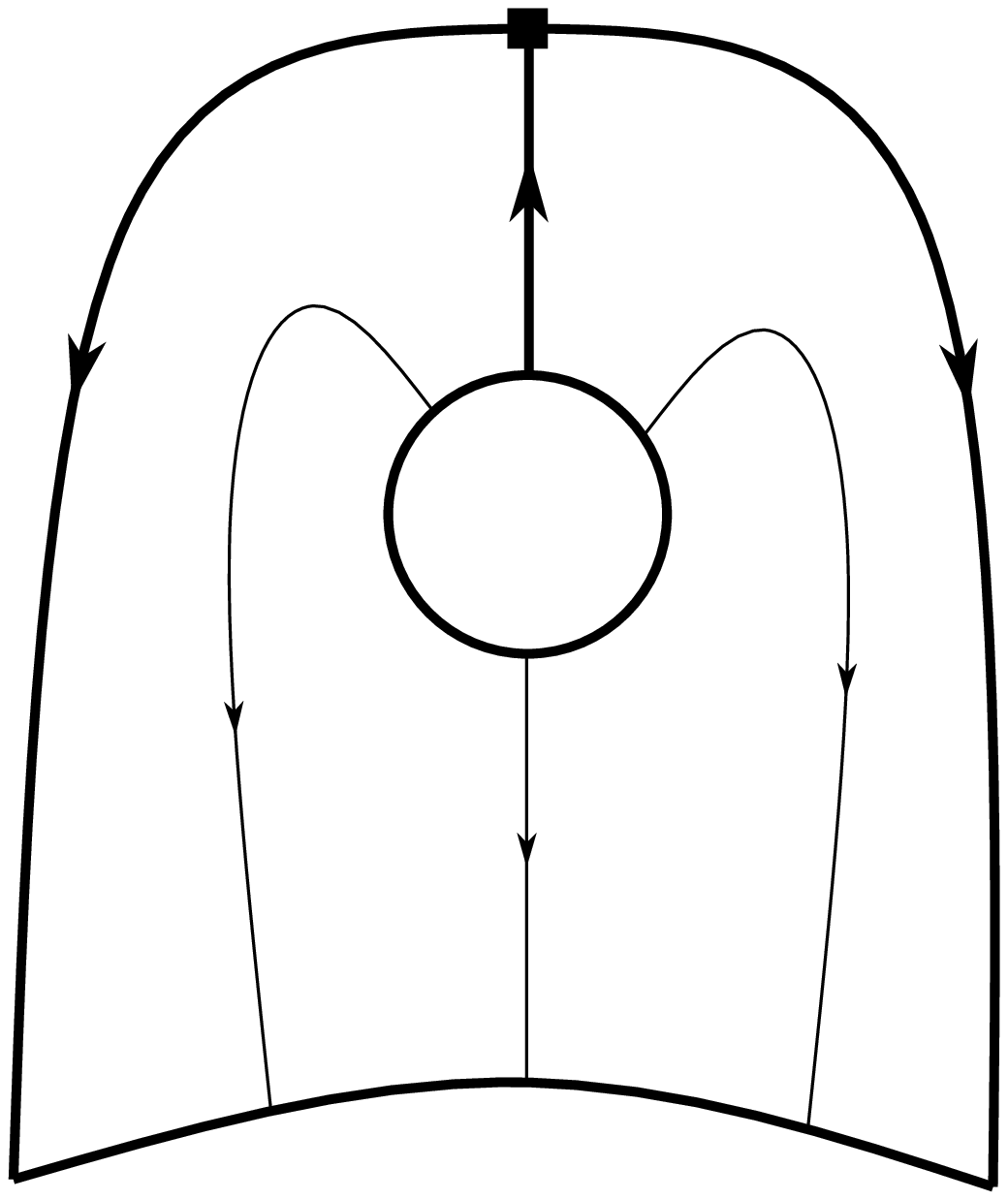} 
					\put(38,54){$\alpha$}
					\put(38,-3){$\omega$}
				\end{overpic}
				
				Type $2$.
			\end{center}
		\end{minipage}
		\begin{minipage}{3cm}
			\begin{center}
				\begin{overpic}[height=2cm]{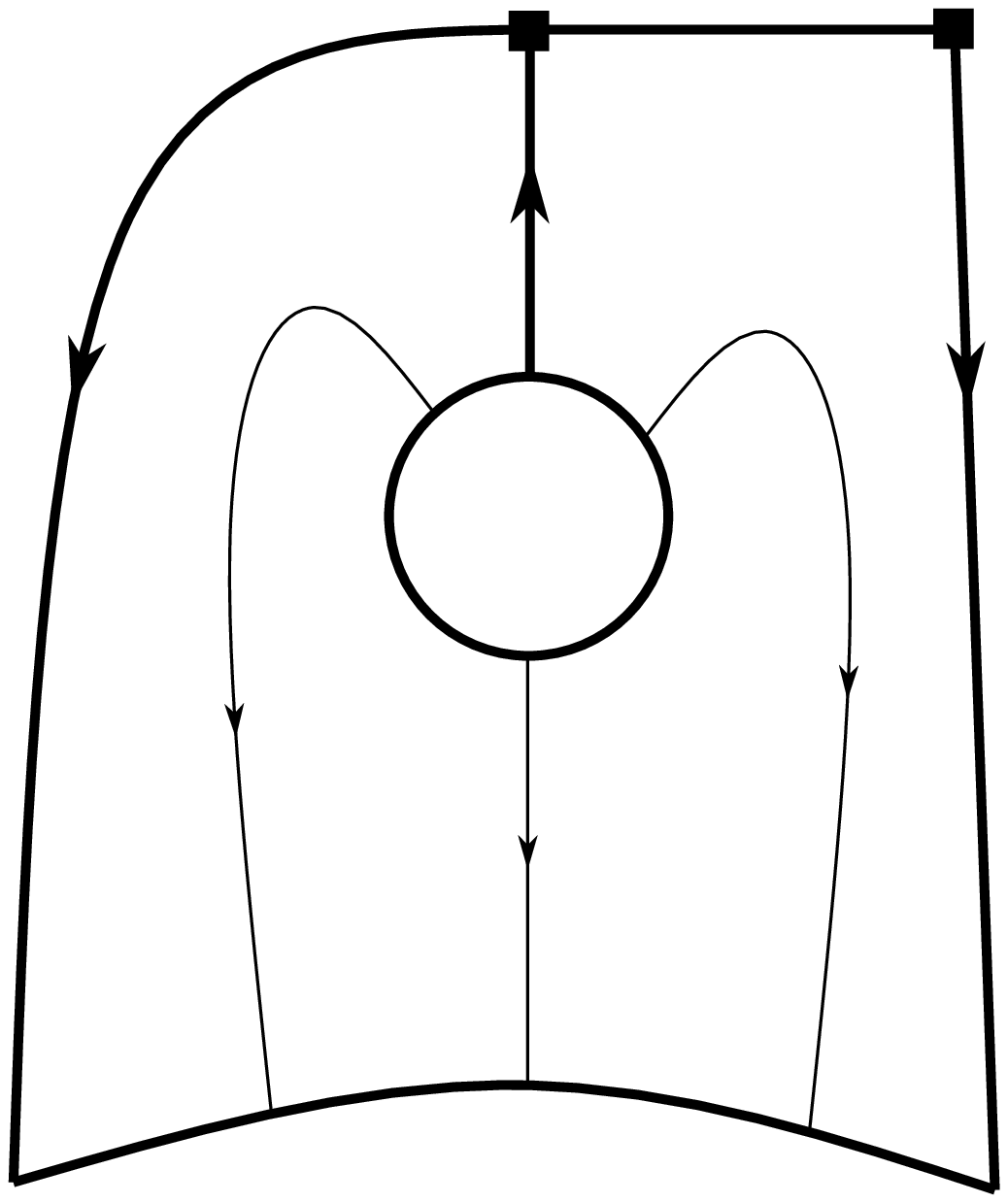} 
						\put(38,54){$\alpha$}
						\put(38,-3){$\omega$}
				\end{overpic}
				
				Type $3$.
			\end{center}
		\end{minipage}
		\begin{minipage}{3cm}
			\begin{center}
				\begin{overpic}[height=2cm]{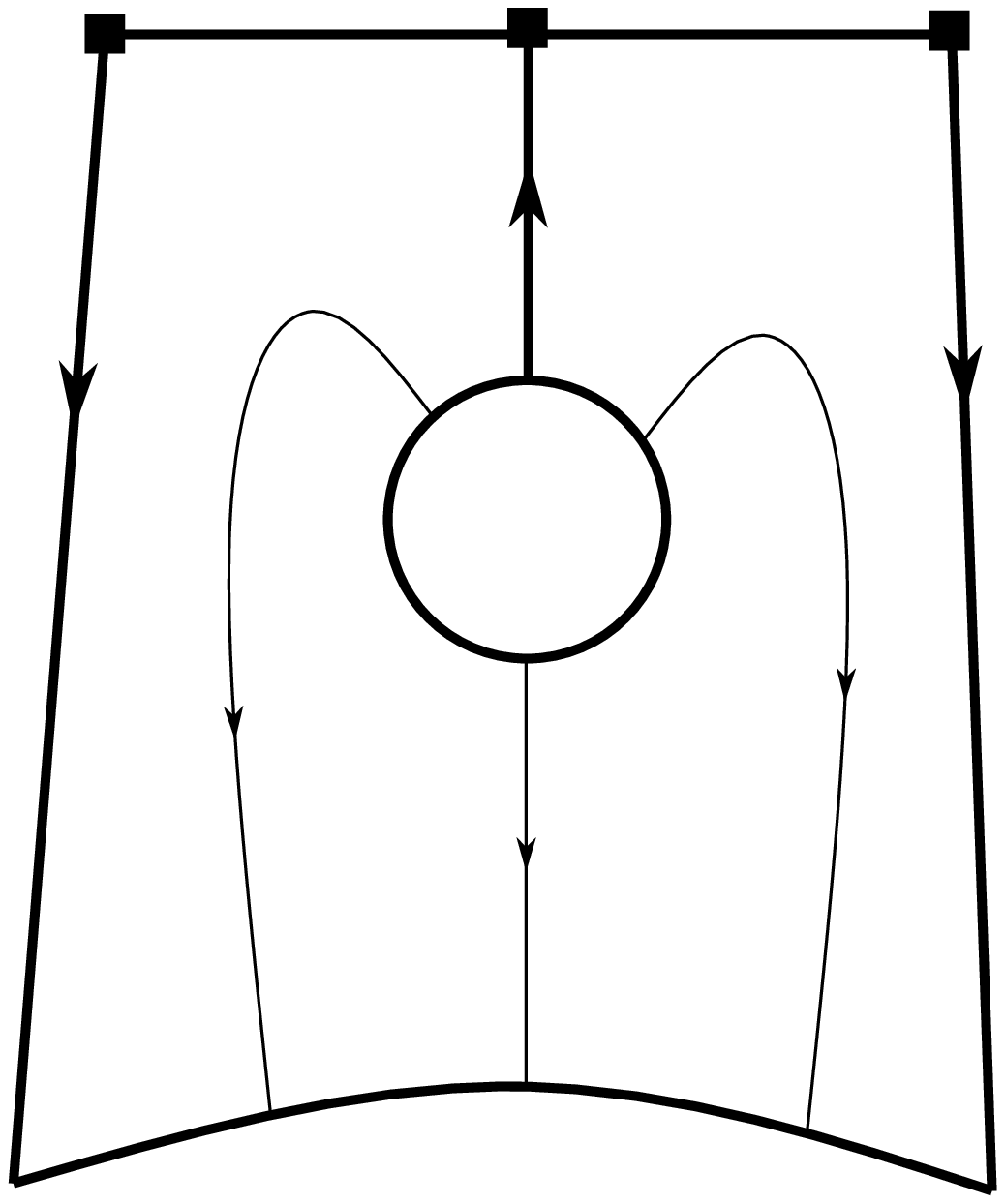} 
						\put(38,54){$\alpha$}
						\put(38,-3){$\omega$}
				\end{overpic}
				
				Type $4$.
			\end{center}
		\end{minipage}
		\begin{minipage}{3cm}
			\begin{center}
				\begin{overpic}[height=2cm]{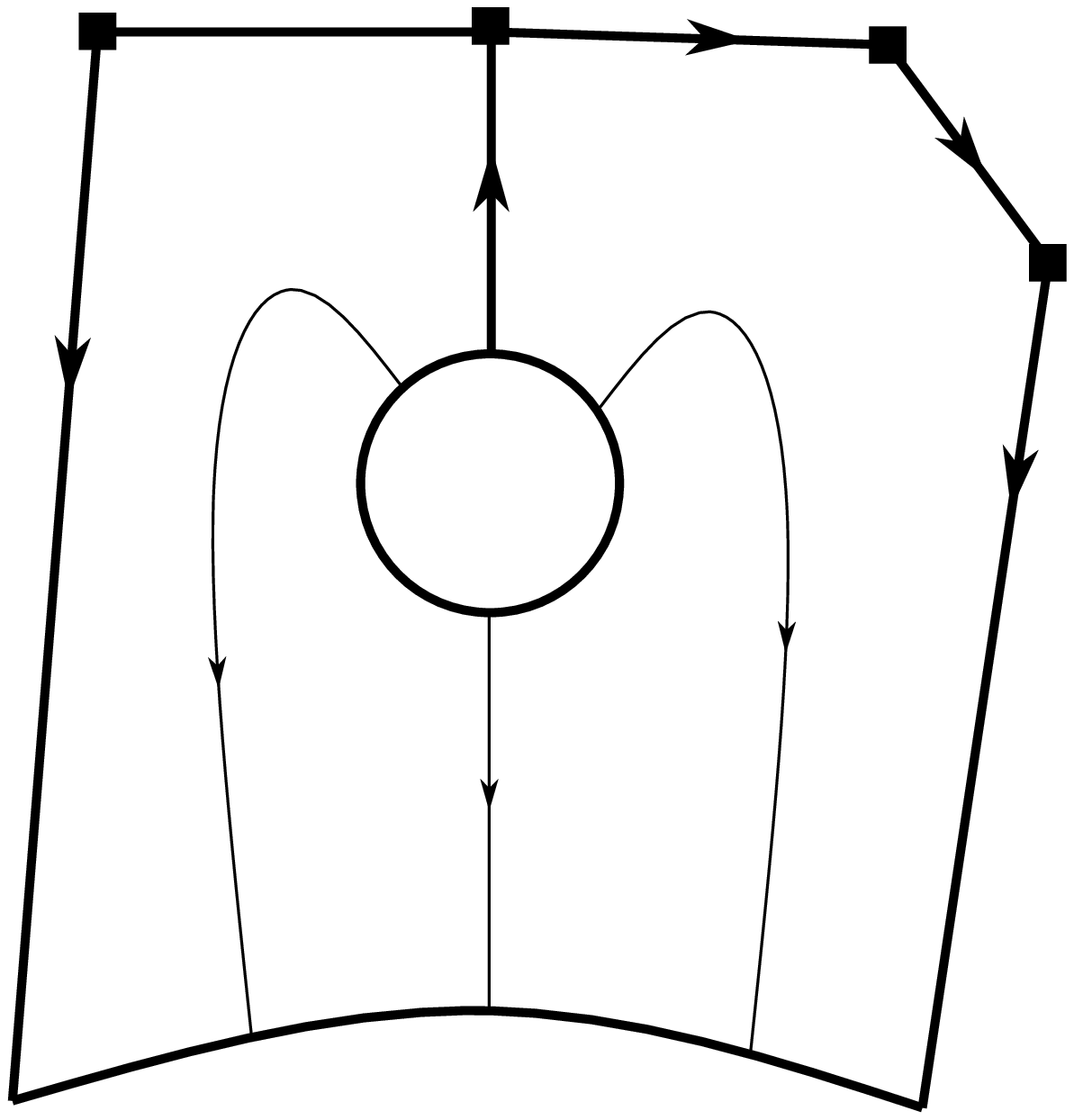} 
					\put(38,54){$\alpha$}
					\put(38,-3){$\omega$}
				\end{overpic}
				
				Type $5$.
			\end{center}
		\end{minipage}
	\end{center}
$\;$
	\begin{center}
		\begin{minipage}{4cm}
			\begin{center}
				\begin{overpic}[height=2cm]{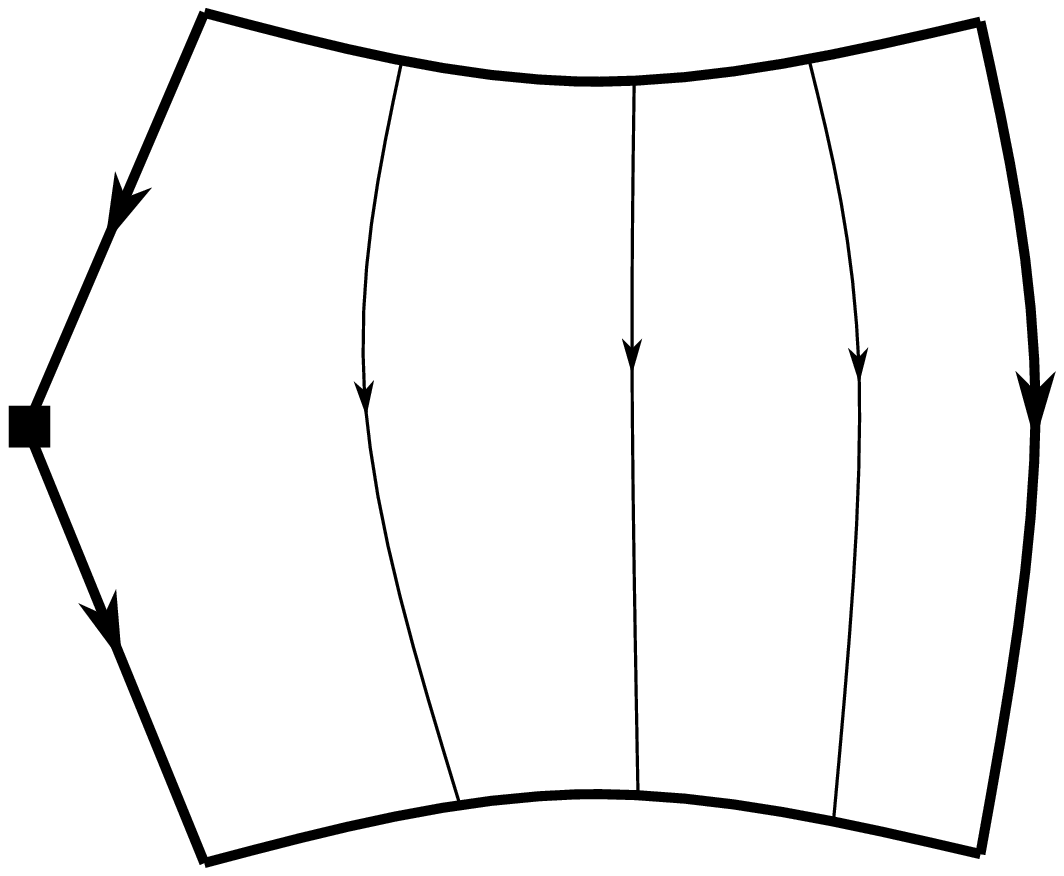} 
						\put(53,78){$\alpha$}
						\put(53,-3){$\omega$}
				\end{overpic}
				
				Type $6$.
			\end{center}
		\end{minipage}
		\begin{minipage}{4cm}
			\begin{center}
				\begin{overpic}[height=2cm]{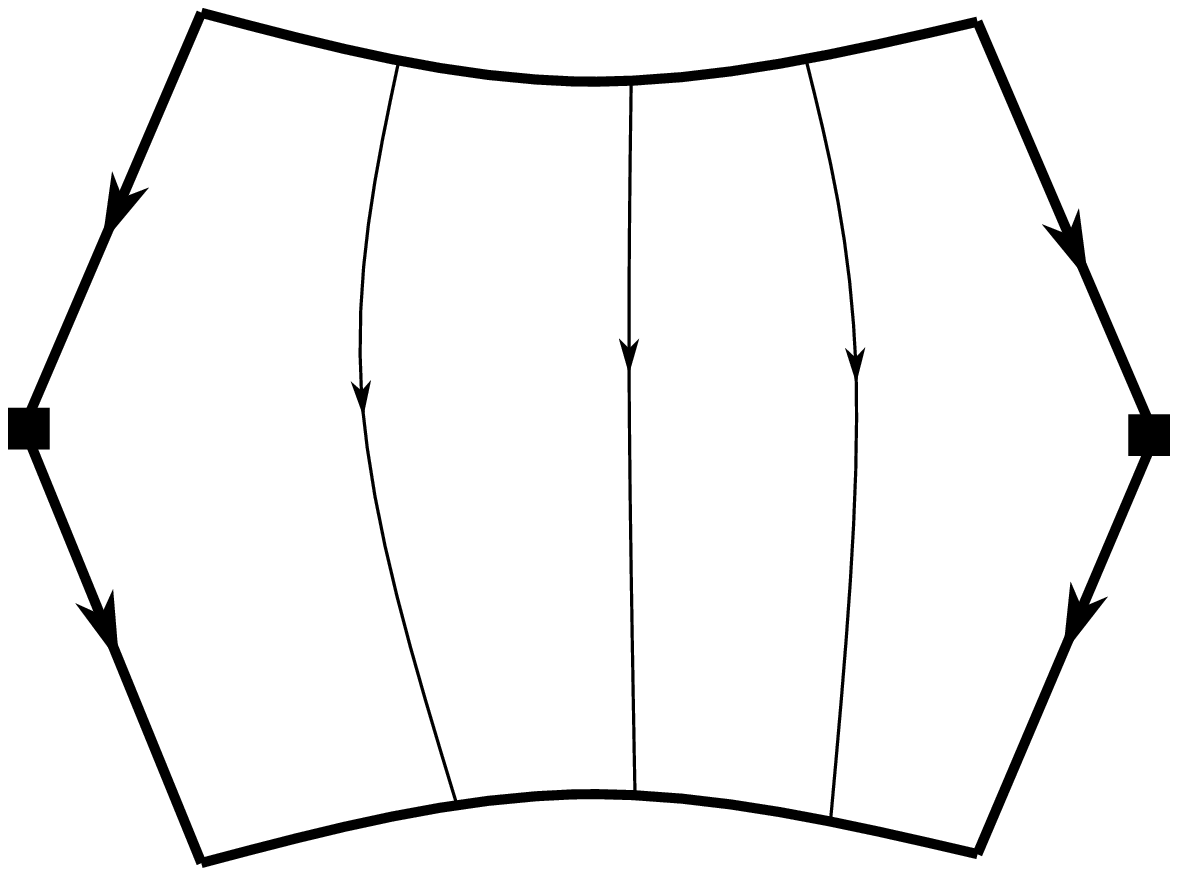} 
						\put(50,70){$\alpha$}
						\put(50,-3){$\omega$}
				\end{overpic}
				
				Type $7$.
			\end{center}
		\end{minipage}
		\begin{minipage}{4cm}
			\begin{center}
				\begin{overpic}[height=2cm]{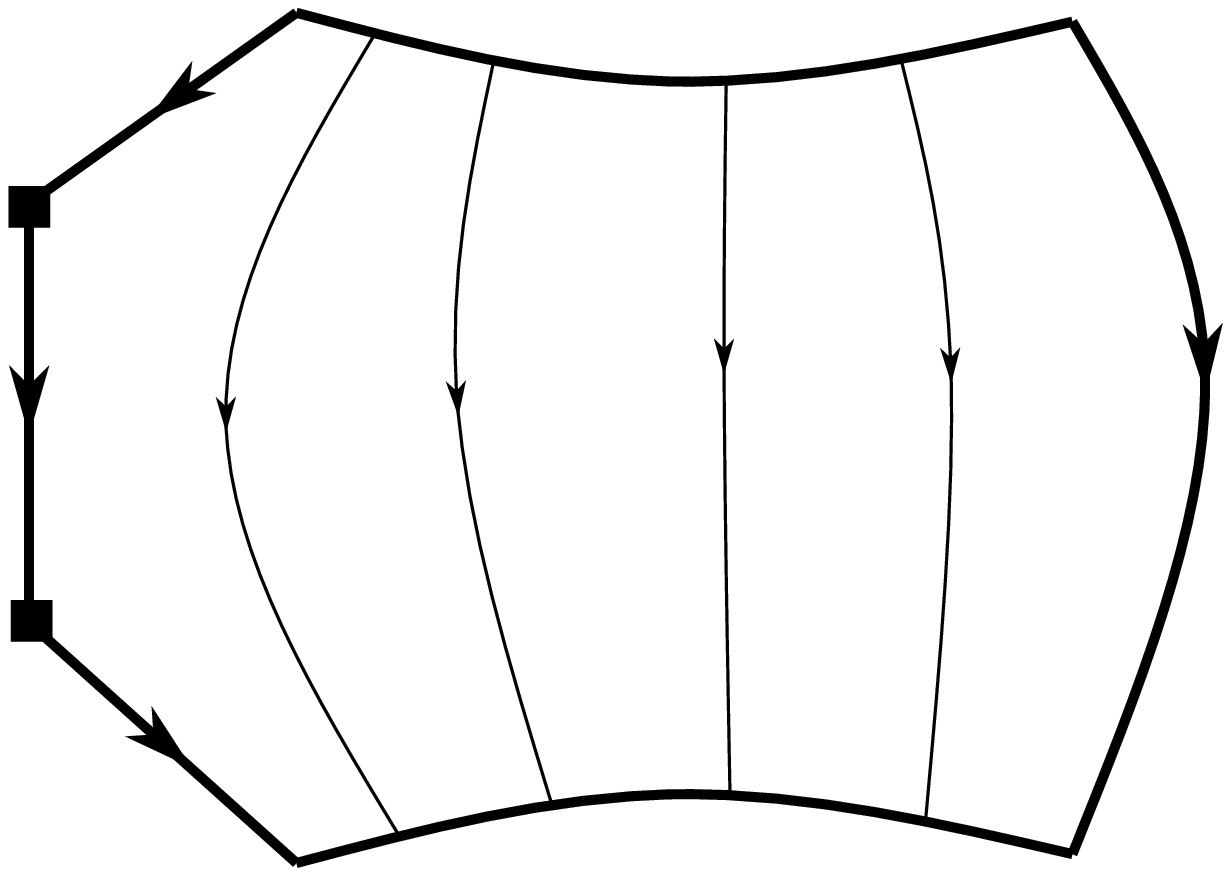} 
						\put(50,68){$\alpha$}
						\put(50,-3){$\omega$}
				\end{overpic}
				
				Type $8$.
			\end{center}
		\end{minipage}
		\begin{minipage}{4cm}
			\begin{center}
				\begin{overpic}[height=2cm]{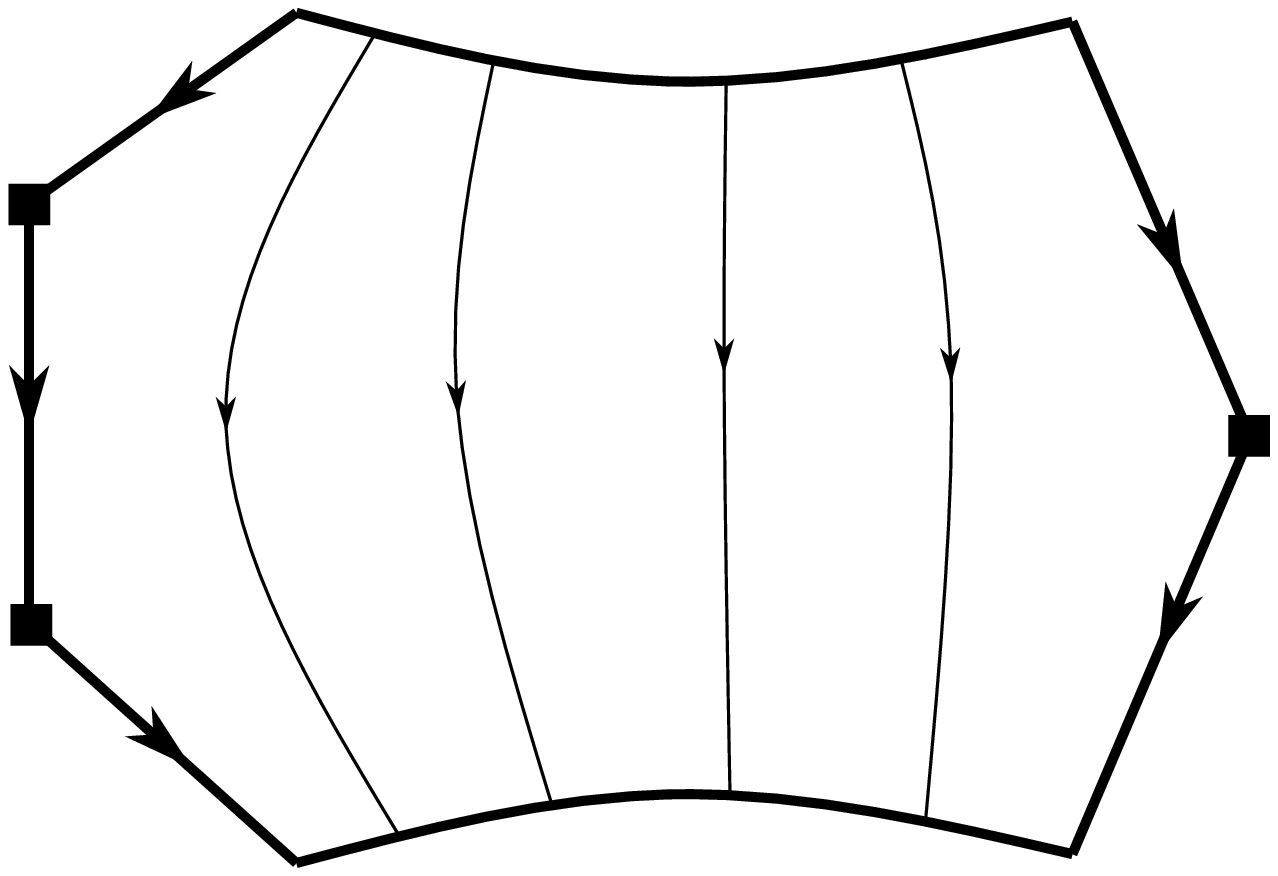} 
					\put(50,65){$\alpha$}
					\put(50,-3){$\omega$}
				\end{overpic}
				
				Type $9$.
			\end{center}
		\end{minipage}
	\end{center}
$\;$
	\begin{center}
		\begin{minipage}{4cm}
			\begin{center}
				\begin{overpic}[height=2cm]{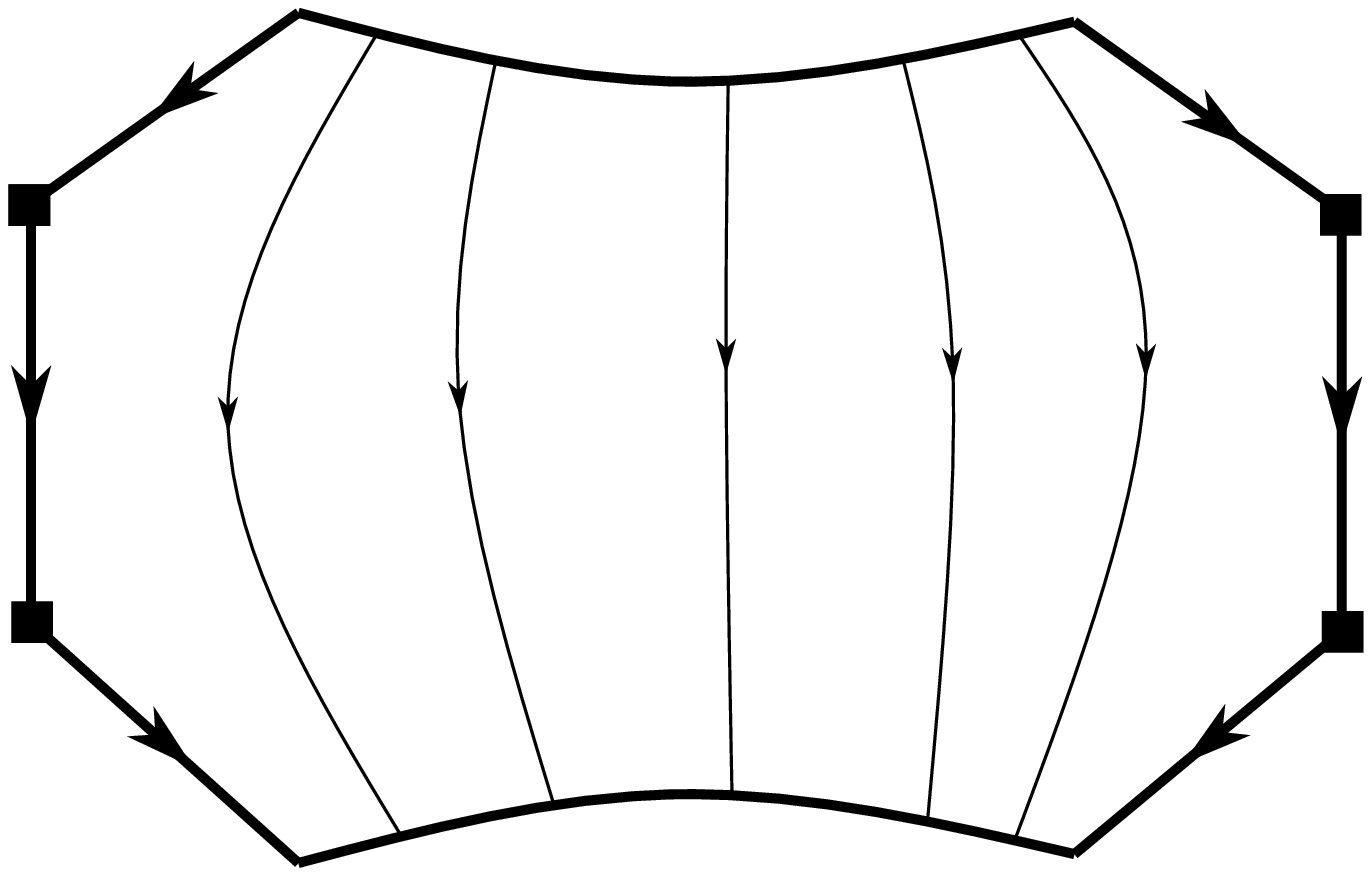} 
					\put(50,60){$\alpha$}
					\put(50,-3){$\omega$}
				\end{overpic}
				
				Type $10$.
			\end{center}
		\end{minipage}
		\begin{minipage}{4cm}
			\begin{center}
				\begin{overpic}[height=2cm]{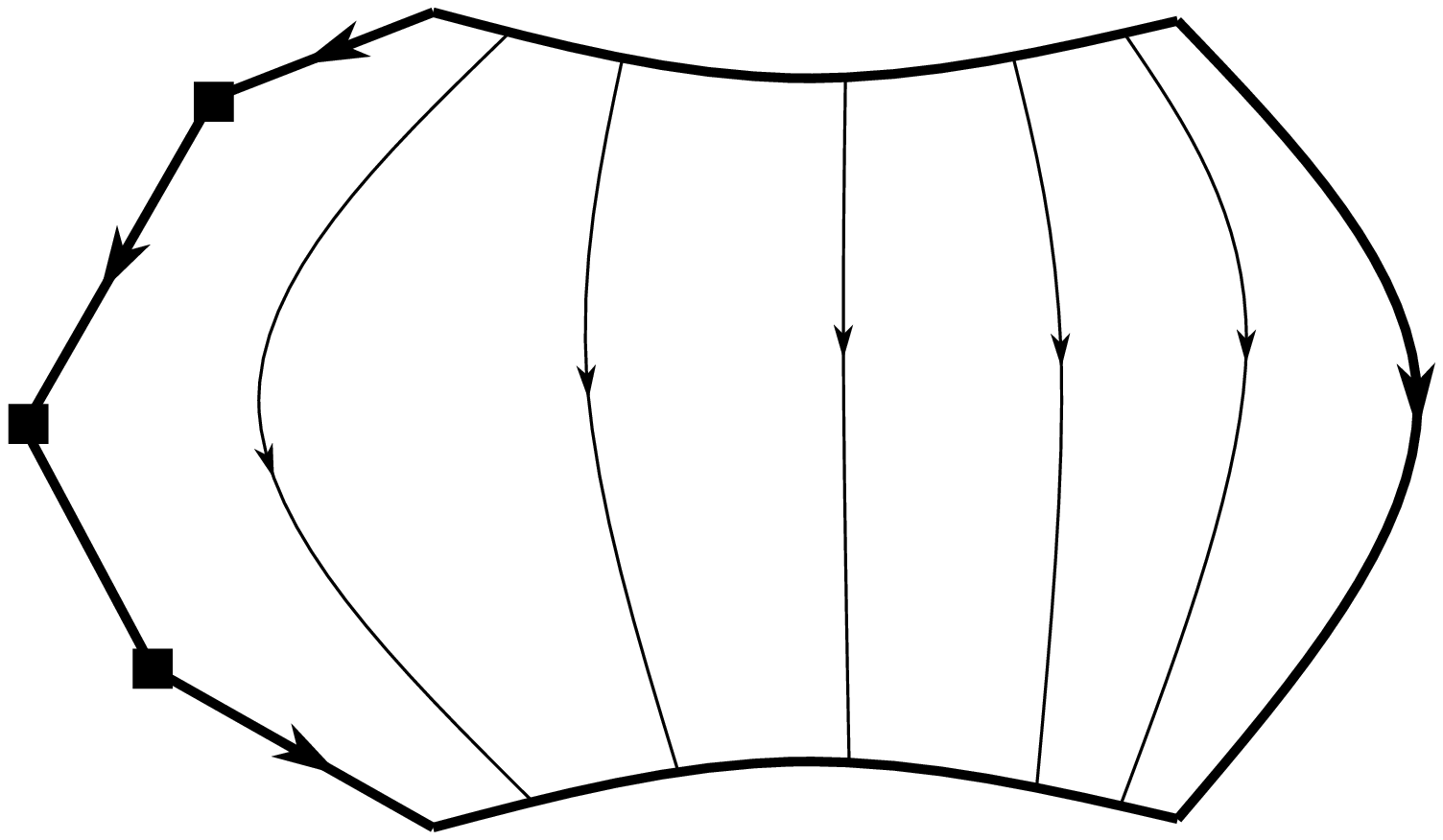} 
						\put(50,55){$\alpha$}
						\put(50,-3){$\omega$}
				\end{overpic}
			
				Type $11$.
			\end{center}
		\end{minipage}
		\begin{minipage}{4cm}
			\begin{center}
				\begin{overpic}[height=2cm]{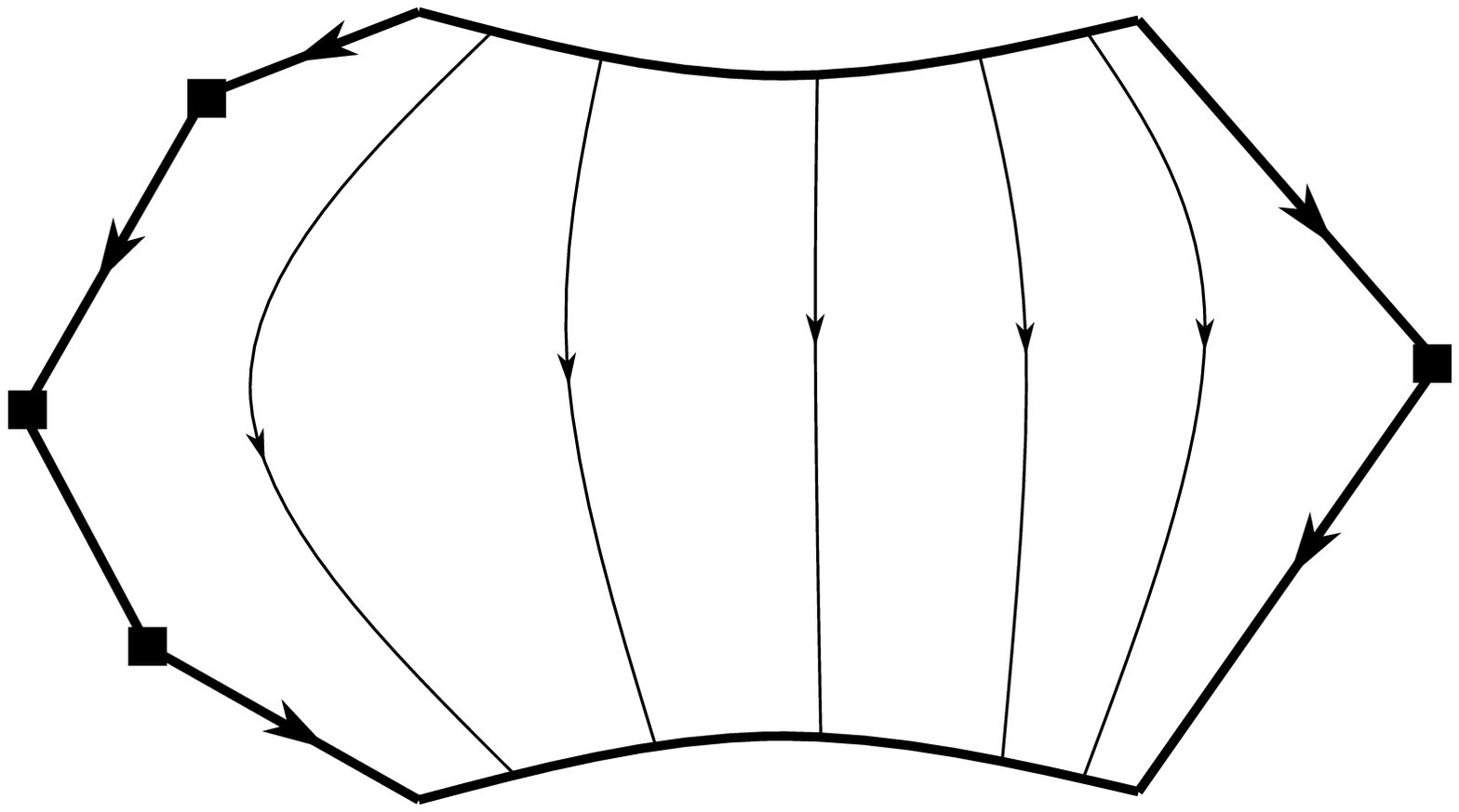} 
						\put(50,53){$\alpha$}
						\put(50,-3){$\omega$}
				\end{overpic}
			
				Type $12$.
			\end{center}
		\end{minipage}
		\begin{minipage}{4cm}
			\begin{center}
				\begin{overpic}[height=2cm]{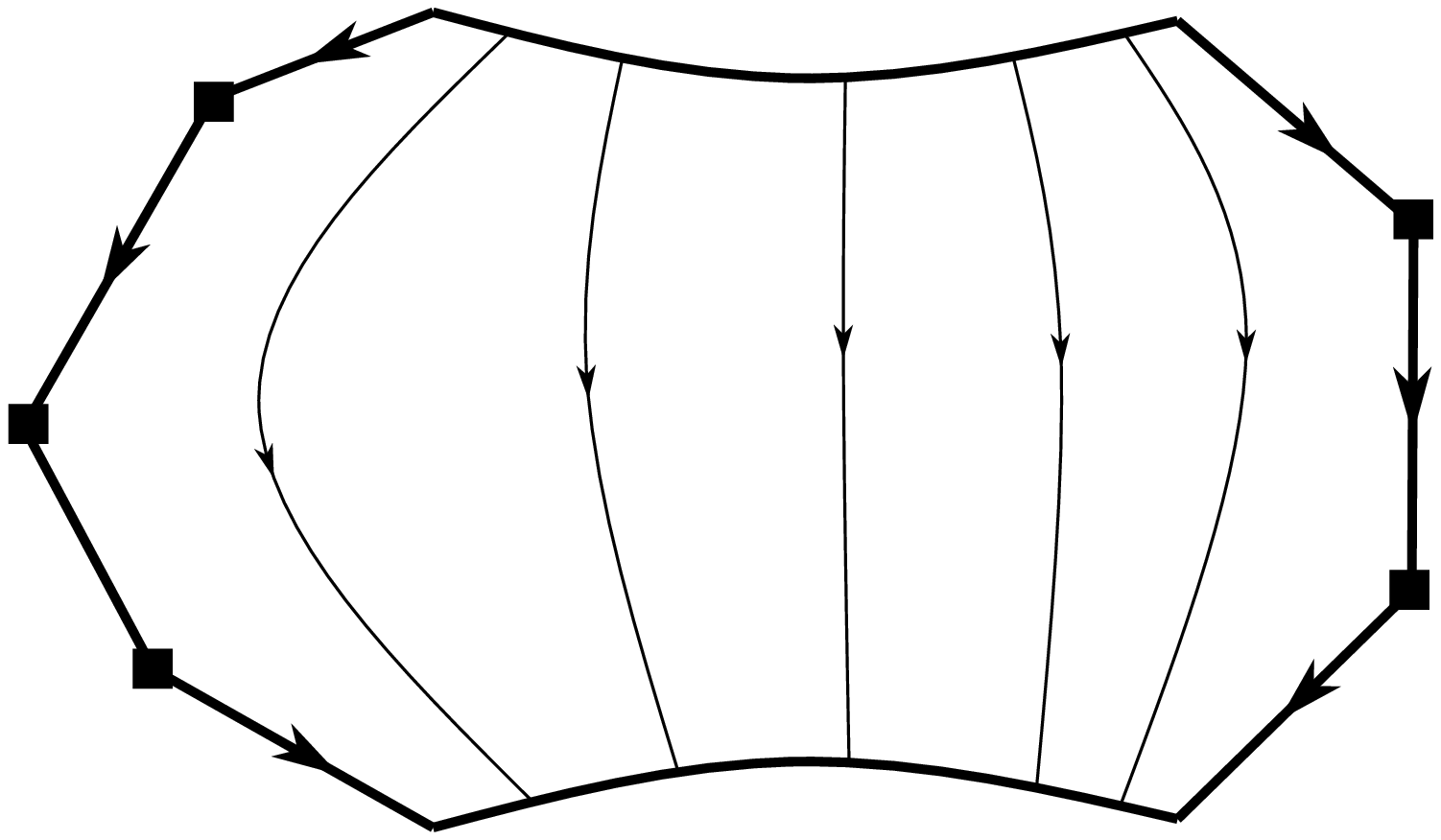} 
						\put(50,55){$\alpha$}
						\put(50,-3){$\omega$}
				\end{overpic}
			
				Type $13$.
			\end{center}
		\end{minipage}
	\end{center}
\caption{Some examples of generic regions. The square dots denotes hyperbolic saddles. Observe that the $\alpha$ and $\omega$-sets could be interchanged.}\label{Fig6}
	\begin{center}
		\begin{minipage}{5cm}
			\begin{center}
				\begin{overpic}[height=3cm]{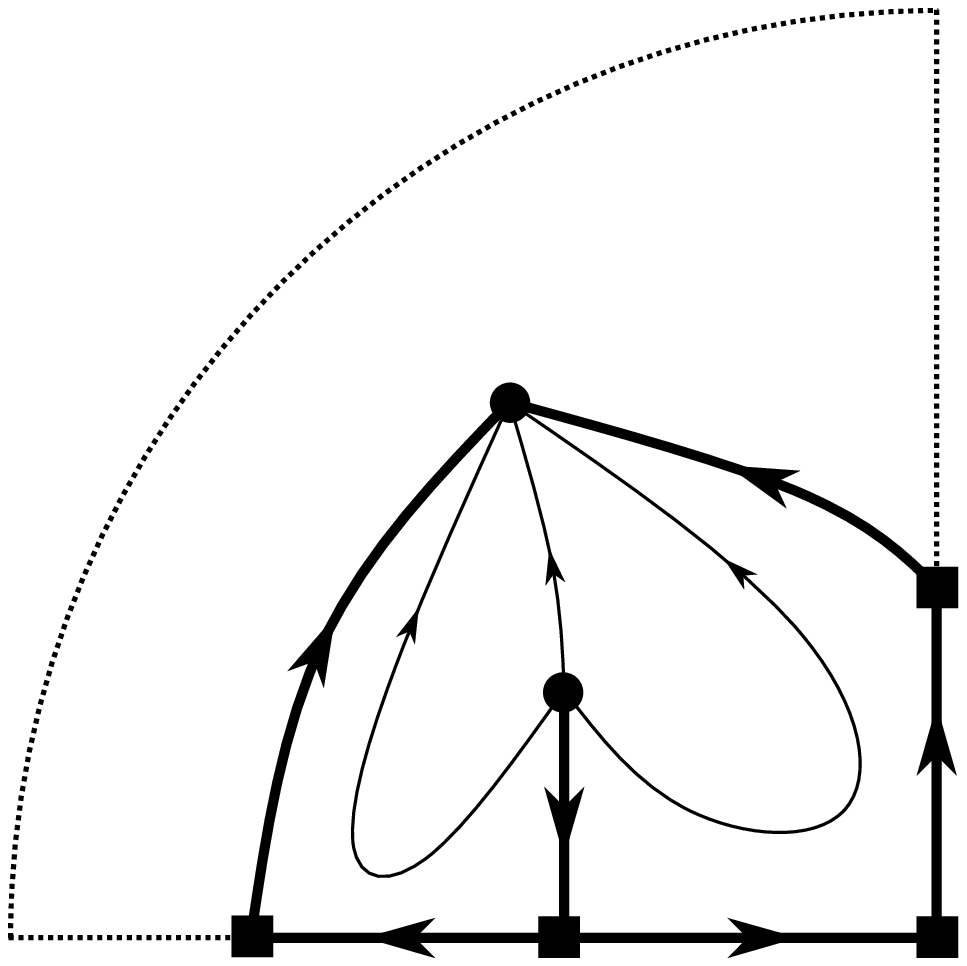} 
				\end{overpic}
				
				Type $5$.
			\end{center}
		\end{minipage}
		\begin{minipage}{5cm}
			\begin{center}
				\begin{overpic}[height=3cm]{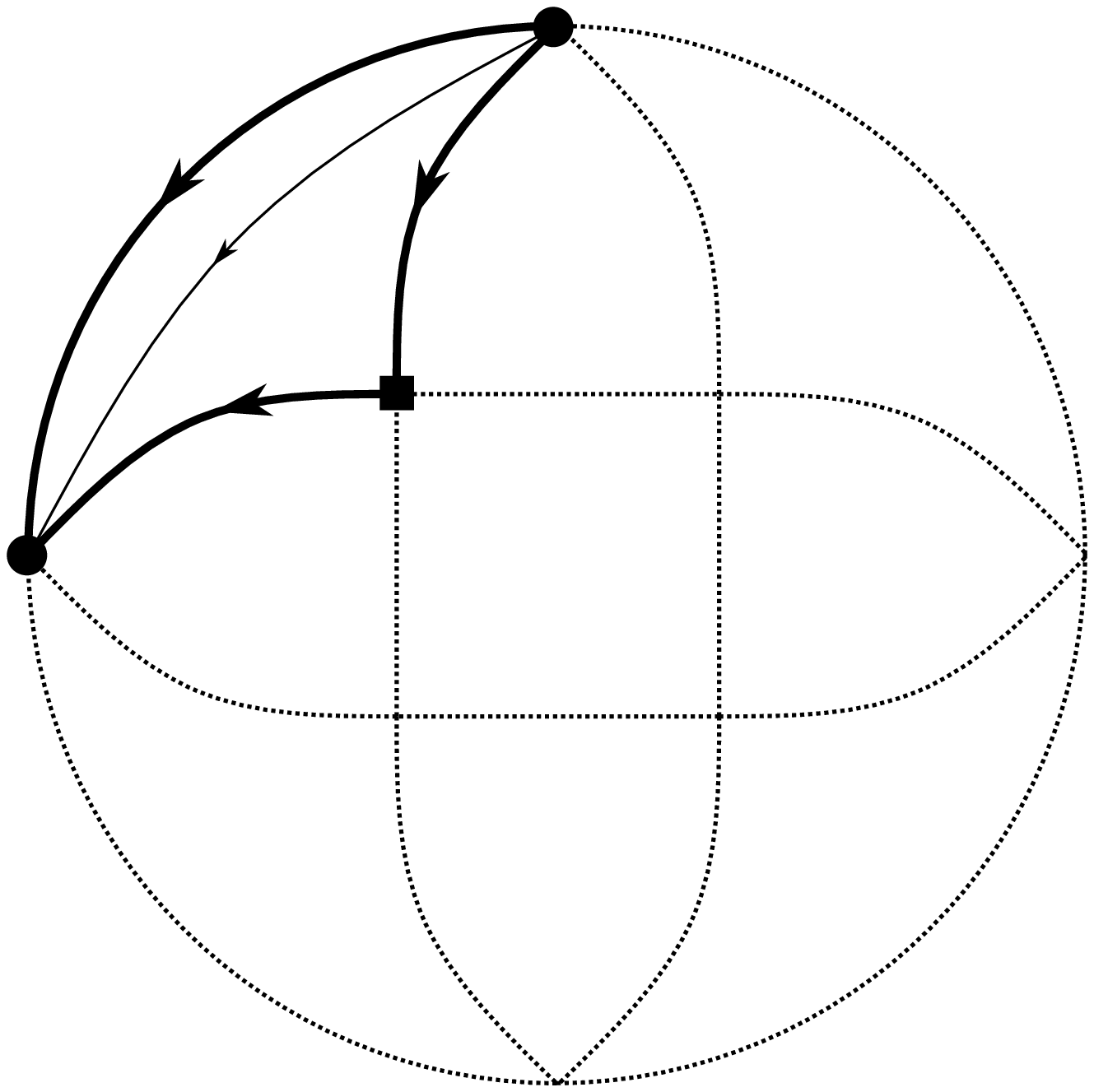} 
				\end{overpic}
				
				Type $6$.
			\end{center}
		\end{minipage}
		\begin{minipage}{5cm}
			\begin{center}
				\begin{overpic}[height=3cm]{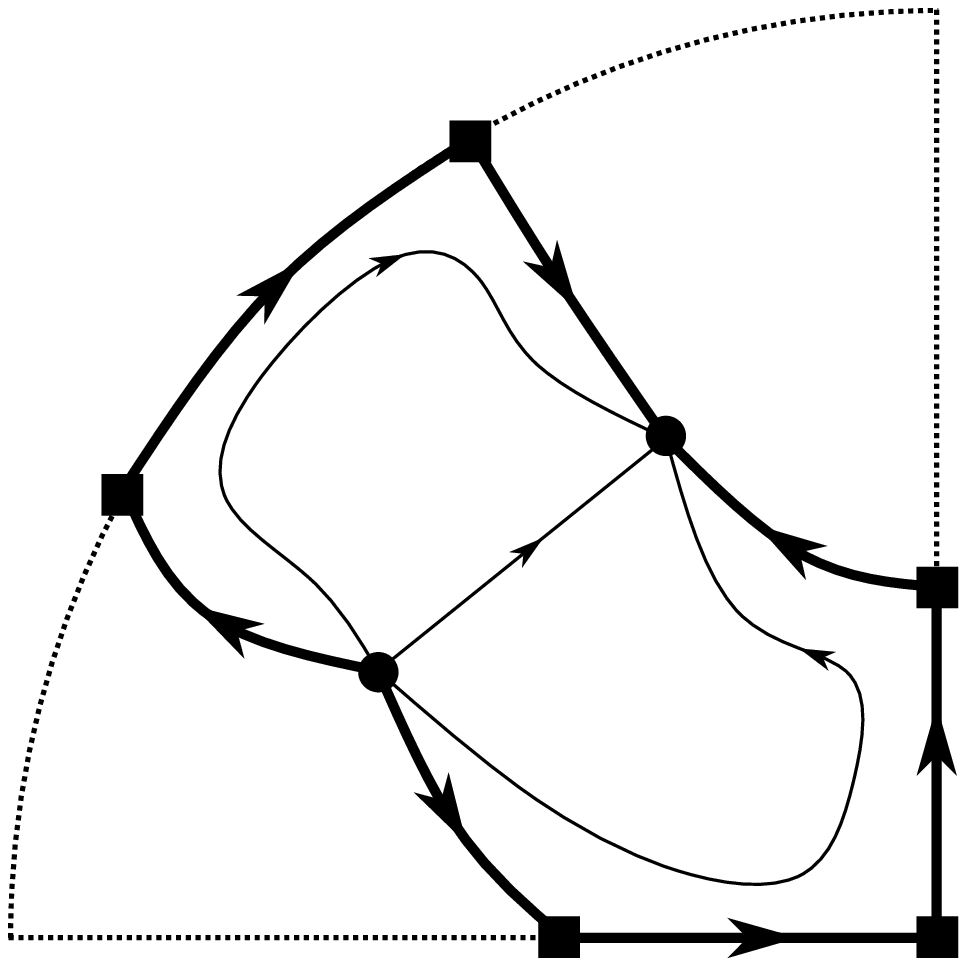} 
				\end{overpic}
				
				Type $13$.
			\end{center}
		\end{minipage}
	\end{center}
\caption{Some examples on the realization of generic regions.}\label{Fig7}
\end{figure}
For each $i\in\{1,\dots,k\}$ and $j\in\{1,\dots,l\}$, we construct homeomorphisms $f_i\colon\overline{E_i}\to\overline{F_i}$ (where $\overline{M}$ denotes the topological closure of $M$) and $g_j\colon\overline{B_j}\to\overline{D_j}$, sending orbits of $p(X)$ to orbits of $p(Y)$, such that they agree whenever they overlaps. Hence, these homeomorphisms give rises to a global homeomorphism $H\colon\mathbb{S}^2\to\mathbb{S}^2$, sending orbits of $p(X)$ to orbits of $p(Y)$. There are many possibilities for the generic regions, depending for example on how many hyperbolic saddles it has on its boundary. See Figures~\ref{Fig6} and \ref{Fig7}. Before we construct the homeomorphism, we state some results. We recall that $\mathbb{S}^2$ can be endowed with some \emph{Riemannian metric} (see \cite[Chapter $13$] {LeeManifolds}) and thus given two points $p$, $q\in\mathbb{S}^2$ and a curve $\gamma\subset\mathbb{S}^2$ from $p$ to $q$, not necessarily given by the flow of $p(X)$, the notion of \emph{length} of $\gamma$ (in relation to the Riemannian metric) is well defined. Such length will be denoted by $\ell(pq)$. Moreover, the curve $\gamma$ is also known as an \emph{arc} from $p$ to $q$.

\begin{proposition}[Lemma~$8$, \cite{PeiPei1959}]\label{Prop6}
	Let $p$ be a hyperbolic saddle to which the separatrices $\gamma_1$ and $\gamma_2$ tend when $t\to\infty$ and $t\to-\infty$. Through $p_1\in\gamma_1$ and $p_2\in\gamma_2$, consider small transversal sections $\sigma_1$ and $\sigma_2$. Given $q_1\in\sigma_1$, let $q_2\in\sigma_2$ be the point which the orbit through $q_1$ intersects $\sigma_2$. If $q_1\to p_1$, then $\ell(q_1q_2)\to\ell(p_1p)+\ell(p_2p)$.	
\end{proposition}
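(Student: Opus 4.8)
The plan is to cut the orbit arc running from $q_1$ to $q_2$ into three pieces at two auxiliary transversal sections $\tau_1$ and $\tau_2$ placed near $p$: the section $\tau_1$ transverse to the stable separatrix $\gamma_1$ at a point $r_1$, and $\tau_2$ transverse to the unstable separatrix $\gamma_2$ at a point $r_2$. Let $U$ be a small neighbourhood of $p$ containing the portions of $\tau_1,\tau_2$ that are used. First I would record two elementary facts coming from hyperbolicity of $p$: the separatrices $\gamma_1,\gamma_2$ approach $p$ exponentially, so $\gamma_1$ has finite length between $p_1$ and $p$ and $\gamma_2$ has finite length between $p$ and $p_2$, which is what makes $\ell(p_1p)$ and $\ell(p_2p)$ meaningful; moreover, as $\tau_1$ (hence $r_1$) is pushed toward $p$ along $\gamma_1$ one has $\ell(p_1r_1)\uparrow\ell(p_1p)$, and as $\tau_2$ (hence $r_2$) is pushed toward $p$ along $\gamma_2$ one has $\ell(r_2p_2)\uparrow\ell(pp_2)$.

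For the two outer pieces I would invoke only continuous dependence on initial conditions over a compact time interval. Let $T_1$ be the time $\gamma_1$ takes from $p_1$ to $r_1$ and $T_2$ the time $\gamma_2$ takes from $r_2$ to $p_2$. Since $\tau_1$ is transverse to $\gamma_1$ at $r_1$ and $\gamma_1$ meets $\tau_1$ only at $r_1$, for $q_1$ close to $p_1$ the forward orbit of $q_1$ meets $\tau_1$ at a point $s_1(q_1)$ close to $r_1$ at a time close to $T_1$, and this orbit arc converges uniformly on $[0,T_1]$ to the arc of $\gamma_1$ from $p_1$ to $r_1$; hence its length tends to $\ell(p_1r_1)$ as $q_1\to p_1$. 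Symmetrically, once it is known that the exit point $s_2(q_1)$ on $\tau_2$ tends to $r_2$, the forward orbit from $s_2(q_1)$ shadows $\gamma_2$ on $[0,T_2]$, meets $\sigma_2$ at a point $q_2$ converging to $p_2$, and the length of that arc tends to $\ell(r_2p_2)$.

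The genuinely dynamical input, where the hyperbolicity of the saddle does the work, concerns the middle piece — the arc that enters $U$ through $\tau_1$ and leaves through $\tau_2$ — and I expect this to be the main obstacle, since the transition time across the saddle diverges as $q_1\to p_1$, so one must use that $X$ vanishes at $p$ fast enough to compensate. I would pass to a $C^1$ chart around $p$ in which $\gamma_1$ and $\gamma_2$ become the coordinate half-axes (straightening the invariant manifolds, or, if one prefers, a $C^1$ linearization of the planar hyperbolic saddle); in such coordinates $\dot\xi=\xi A(\xi,\eta)$, $\dot\eta=\eta B(\xi,\eta)$ with $A<0<B$ near the origin, so along any orbit contained in a quadrant both $\xi$ and $\eta$ are strictly monotone. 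Consequently the middle arc is the graph of a monotone function over an interval of length at most $\mathrm{diam}(U)$, so its Euclidean length is at most $2\,\mathrm{diam}(U)$; since the Riemannian metric is bi-Lipschitz to the Euclidean one on $U$, its length is at most $C\,\mathrm{diam}(U)$, uniformly over all such arcs. The same local picture (or the inclination/$\lambda$-lemma) also supplies the convergence used in the previous paragraph: as $q_1\to p_1$ the entry point $s_1(q_1)$ tends to $r_1\in\gamma_1$, which for the linear model forces the quantity $\xi^{|B|}\eta^{|A|}$, constant along orbits, to tend to $0$, so the exit point $s_2(q_1)$ on the fixed section $\tau_2$ tends to $\gamma_2\cap\tau_2=r_2$.

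Finally I would assemble the pieces. For each fixed $U$, the three estimates give $\liminf_{q_1\to p_1}\ell(q_1q_2)\ge\ell(p_1r_1)+\ell(r_2p_2)$ and $\limsup_{q_1\to p_1}\ell(q_1q_2)\le\ell(p_1r_1)+\ell(r_2p_2)+C\,\mathrm{diam}(U)$. Since $\ell(p_1r_1)\le\ell(p_1p)$ and $\ell(r_2p_2)\le\ell(pp_2)$ for every $U$, letting $\mathrm{diam}(U)\to0$ in the second inequality gives $\limsup_{q_1\to p_1}\ell(q_1q_2)\le\ell(p_1p)+\ell(pp_2)$; and since $\ell(p_1r_1)+\ell(r_2p_2)\uparrow\ell(p_1p)+\ell(pp_2)$ as $U$ shrinks to $\{p\}$, the first inequality gives $\liminf_{q_1\to p_1}\ell(q_1q_2)\ge\ell(p_1p)+\ell(pp_2)$. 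Hence $\ell(q_1q_2)\to\ell(p_1p)+\ell(p_2p)$, which is the assertion.
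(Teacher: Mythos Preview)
The paper does not prove this proposition; it simply quotes it as Lemma~8 of Peixoto--Peixoto \cite{PeiPei1959} and uses it as a black box in the Appendix. So there is no ``paper's own proof'' to compare against. Your argument is a correct and standard proof of the cited result: split the passing orbit into two outer pieces handled by continuous dependence on finite time intervals, plus a middle piece near the saddle whose length is uniformly $O(\mathrm{diam}(U))$ thanks to the monotonicity of the coordinates after straightening the invariant manifolds, and then run the $\liminf/\limsup$ sandwich as $U$ shrinks.

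Two small points worth tightening. First, your claim that $\xi^{|B|}\eta^{|A|}$ is constant along orbits holds only for the genuinely linear saddle; in the $C^1$-straightened coordinates with $A,B$ nonconstant it is not an invariant, so the convergence $s_2(q_1)\to r_2$ should be justified directly (continuity of the Dulac map up to the separatrix) or via the $\lambda$-lemma, as you indicate parenthetically. Second, the length bound for the middle arc uses that it is a monotone graph over an interval of length at most $\mathrm{diam}(U)$; strictly speaking the graph variable can also range over at most $\mathrm{diam}(U)$, so the Euclidean length is at most $\sqrt{2}\,\mathrm{diam}(U)$ rather than the stated $2\,\mathrm{diam}(U)$, but of course any constant suffices. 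With these cosmetic fixes your proof is complete.
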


\begin{remark}
	On the context of Proposition~\ref{Prop6} we stress that by $q_1\to p_1$ we mean that $q_1$ is getting closer to $p_1$ in relation to the transversal section $\gamma_1$. More precisely, if we identify $\sigma_1=\{s\in\mathbb{R}\colon 0\leqslant s<\delta\}$ with $p_1$ given by $s=0$ and $q_1$ given by $s=s_1$ for some $s_1>0$, then by $q_1\to p_1$ we mean $s_1\to 0$.
\end{remark}

\begin{proposition}[Lemma~$10$, \cite{PeiPei1959}]\label{Prop7}
	Let $a_0b_0$ be an arc and $a_ib_i$, $i\geqslant1$, be a sequence of arcs converging uniformly to $a_0b_0$ in such a way that $\ell(a_ib_i)\to\ell(a_0b_0)$. Then the following statements holds.
	\begin{enumerate}[label=(\roman*)]
		\item Given points $p_i$ of $a_ib_i$ and $p_0$ of $a_0b_0$, let
			\[z_i=\frac{\ell(a_ip_i)}{\ell(a_ib_i)}, \quad z_0=\frac{\ell(a_0p_0)}{\ell(a_0b_0)}.\]
		Then $p_i\to p_0$ if, and only if, $z_i\to z_0$;
		\item The points on $a_ib_i$ converge uniformly to the points on $a_0b_0$, with equal $z$-coordinate.
	\end{enumerate}
\end{proposition}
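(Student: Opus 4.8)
To prove Proposition~\ref{Prop7} the plan is to reparametrize every arc by normalized arc length and then show that all subsequential limits of these reparametrizations coincide with the one for $a_0b_0$. Put $L_i=\ell(a_ib_i)$, which we may assume positive (the case $L_0=0$ is trivial), and let $\sigma_i\colon[0,1]\to\mathbb{S}^2$ be the parametrization of $a_ib_i$ proportional to arc length, so that $\sigma_i(0)=a_i$, $\sigma_i(1)=b_i$ and $\ell\bigl(\sigma_i|_{[0,z]}\bigr)=zL_i$ for all $z$; in particular each $\sigma_i$ is $L_i$-Lipschitz for the Riemannian distance $d$ of $\mathbb{S}^2$. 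With this notation the $z$-coordinate of a point $p_i\in a_ib_i$ is precisely the parameter $z_i$ with $\sigma_i(z_i)=p_i$, so statement $(ii)$ is the assertion that $\sigma_i\to\sigma_0$ uniformly, from which I will deduce $(i)$. We also use that uniform convergence of the arcs yields $a_i\to a_0$, $b_i\to b_0$ and Hausdorff convergence of the arcs as subsets of $\mathbb{S}^2$, and that the arcs are simple and rectifiable (as in every application in this paper), so that the $z$-coordinates and the arc-length function $p\mapsto\ell(a_0p)$, a homeomorphism $a_0b_0\to[0,L_0]$, are well defined.

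The core step is the uniform convergence $\sigma_i\to\sigma_0$. Since $L_i\to L_0$, the family $\{\sigma_i\}$ is equi-Lipschitz, hence precompact in $C\bigl([0,1],\mathbb{S}^2\bigr)$ by the Arzel\`a--Ascoli theorem, so it suffices to show that every uniform limit $\tau$ of a subsequence $\sigma_{i_k}$ equals $\sigma_0$. Such a $\tau$ satisfies $\tau(0)=a_0$, $\tau(1)=b_0$, is $L_0$-Lipschitz, and takes values in $a_0b_0$ (each $\tau(z)=\lim_k\sigma_{i_k}(z)$ is a limit of points of the arcs $a_{i_k}b_{i_k}$, which converge in Hausdorff distance to the compact set $a_0b_0$). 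By lower semicontinuity of length under pointwise convergence, $\ell\bigl(\tau|_{[0,z]}\bigr)\le\liminf_k\ell\bigl(\sigma_{i_k}|_{[0,z]}\bigr)=zL_0$ and, symmetrically, $\ell\bigl(\tau|_{[z,1]}\bigr)\le(1-z)L_0$. Conversely, $\tau|_{[0,z]}$ is a path contained in the simple arc $a_0b_0$ joining $a_0$ to $\tau(z)$; reading it through the unit-speed parametrization of $a_0b_0$ (which preserves lengths) shows its length is at least $\ell\bigl(a_0\,\tau(z)\bigr)$, and likewise $\ell\bigl(\tau|_{[z,1]}\bigr)\ge\ell\bigl(\tau(z)\,b_0\bigr)=L_0-\ell\bigl(a_0\,\tau(z)\bigr)$. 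These two pairs of inequalities force $\ell\bigl(a_0\,\tau(z)\bigr)=zL_0$ for all $z$, whence $\tau(z)=\sigma_0(z)$ since $p\mapsto\ell(a_0p)$ is injective. Thus every subsequential limit equals $\sigma_0$, proving $(ii)$.

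Statement $(i)$ now follows from $(ii)$ together with the injectivity of $\sigma_0$. Writing $p_i=\sigma_i(z_i)$ and $p_0=\sigma_0(z_0)$: if $z_i\to z_0$ then $d(p_i,p_0)\le\|\sigma_i-\sigma_0\|_\infty+d\bigl(\sigma_0(z_i),\sigma_0(z_0)\bigr)\to0$ by $(ii)$ and the continuity of $\sigma_0$; conversely, if $p_i\to p_0$ but $z_i\not\to z_0$, pass to a subsequence with $z_{i_k}\to z_*\neq z_0$, so by the implication just proved $p_{i_k}\to\sigma_0(z_*)$, while also $p_{i_k}\to\sigma_0(z_0)$, giving $\sigma_0(z_*)=\sigma_0(z_0)$, a contradiction.

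I expect the second paragraph to be the crux: a subsequential limit $\tau$ of the arc-length parametrizations could, a priori, be a \emph{reparametrization} of $a_0b_0$ traversed at non-constant speed or with backtracking, rather than $\sigma_0$ itself. Excluding this is exactly what the two-sided length estimate does --- lower semicontinuity of length bounds the speed of $\tau$ from above on each subinterval, the requirement that $\tau$ stay in the simple arc $a_0b_0$ and join its two endpoints bounds it from below, and only the constant-speed parametrization meets both bounds at once.
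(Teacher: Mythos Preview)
The paper does not prove this proposition at all: it is quoted verbatim as Lemma~10 of \cite{PeiPei1959} and used as a black box in the Appendix, so there is no ``paper's own proof'' to compare against. Your argument is correct and self-contained. The route you take --- normalize by arc length, use equi-Lipschitzness and Arzel\`a--Ascoli to reduce to identifying subsequential limits, then pin down the limit via the two-sided length estimate (lower semicontinuity from above, the simple-arc constraint from below) --- is the natural modern way to prove such a statement, and your final paragraph correctly isolates the only genuinely delicate point. One small remark: you tacitly use that a continuous path inside a simple arc joining two of its points has length at least the arc-length distance between them; this is true (project through the homeomorphism $p\mapsto\ell(a_0p)$ onto $[0,L_0]$, where it becomes obvious), and you do signal it, but it is worth stating once explicitly since it is exactly what rules out backtracking.
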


As an example of the techniques of M. C. Peixoto and M. M. Peixoto \cite{PeiPei1959} we now present the construction the homeomorphism for a generic region $R$ of $X$, of type $7$ (see Figure~\ref{Fig6}), known as type $V$ in \cite{PeiPei1959}. Let $a_0a_1$ and $b_0b_1$ be the source $\alpha$ and the sink $\omega$ of $R$. Let also $p_0$ and $p_1$ be the hyperbolic saddles associated to $R$. Let $W\subset\mathbb{R}^2$ be the rectangle given by,
	\[W=\{(u,v)\in\mathbb{R}^2\colon 0\leqslant u\leqslant 1, \; 0\leqslant v \leqslant 2\}.\]
We now construct an homeomorphism $\varphi\colon\overline{R}\to W$ as follows (see Figure~\ref{Fig8}).
\begin{figure}[h]
	\begin{center}
		\begin{overpic}[width=10cm]{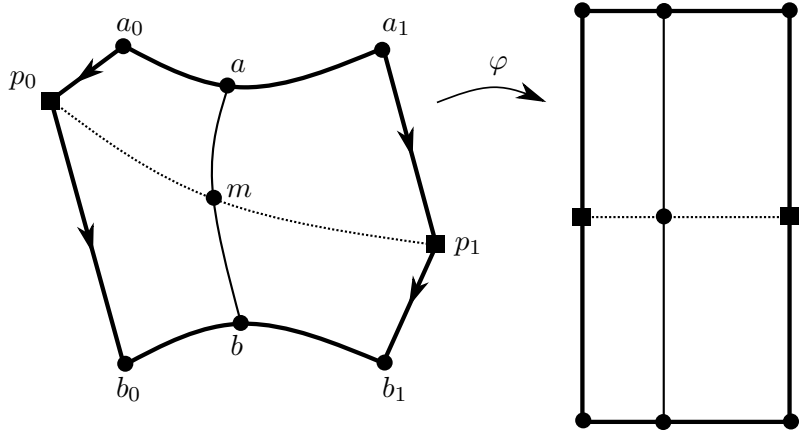} 
			\put(10,53){$a_0$}
			\put(45,53){$a_1$}
			\put(-4,46){$p_0$}
			\put(54.5,24){$p_1$}
			\put(10,4.5){$b_0$}
			\put(45,4.5){$b_1$}
			\put(24.5,31){$m$}
			\put(59,48){$\varphi$}
			\put(25,47.5){$a$}
			\put(25,10){$b$}
		\end{overpic}
	\end{center}
	\caption{An illustration of the homeomorphism of a generic region of type $7$.}\label{Fig8}
\end{figure}
The arc $a_0a_1$ is mapped onto the segment $0\leqslant u\leqslant 1$, $v=0$ by arc length. That is, given $a\in a_0a_1$, let $\varphi(a)=(u,0)$, where
\begin{equation}\label{35}
	u=\frac{\ell(a_0a)}{\ell(a_0a_1)}.
\end{equation}
Similarly, the arc $a_ip_i$ (resp. $p_ib_i)$ is mapped by arc length onto the segment $u=i$, $0\leqslant v\leqslant 1$ (resp. $u=i$, $1\leqslant v\leqslant 2$), $i\in\{0,1\}$. We now extend $\varphi$ to the interior of $\overline{R}$. Let
	\[\mu_0=\frac{\ell(a_0p_0)}{\ell(a_0b_0)}, \quad \mu_1=\frac{\ell(a_1p_1)}{\ell(a_1b_1)}.\]
Given $a\in a_0a_1$, with $\varphi(a)=(u,0)$, let $b\in b_0b_i$ be the point which the orbit through $a$ intersects $b_0b_1$. Let also $m\in ab$ be the unique point such that
\begin{equation}\label{27}
	\frac{\ell(am)}{\ell(ab)}=(1-u)\mu_0+u\mu_1,
\end{equation}
and define $\varphi(m)=(u,1)$. Given $r\in ab$ we now define $\varphi(r)=(u,v)$, where
	\[v=\left\{\begin{array}{ll}
			\displaystyle \frac{\ell(ar)}{\ell(am)}, & \text{ if } v\in am, \vspace{0.2cm} \\
			\displaystyle 1+\frac{\ell(ar)}{\ell(mb)}, & \text{ if } v\in mb.
		\end{array}\right.\]
Observe that $\varphi$ is a bijection. The continuity of $\varphi$ in all points except those in the curves $a_ip_ib_i$, $i\in\{0,1\}$ follows from the continuity of solutions with respect to the initial conditions (see Theorem~$4$, p. 92 of \cite{Perko2001}), together with \eqref{27} and Proposition~\ref{Prop7}. To get continuity in the points of $a_ip_ib_i$, $i\in\{0,1\}$, we have to use also Proposition~\ref{Prop6}. Similarly, $\varphi^{-1}$ is also continuous and thus $\varphi$ is indeed an homeomorphism. Now, in the exactly same way, we construct an homeomorphism $\varphi_Y\colon\overline{R_Y}\to W$ and thus $h=\varphi_Y^{-1}\circ\varphi\colon\overline{R}\to\overline{R_Y}$ is the desired homeomorphism. If $R$ is a region of type $2$ (known as type $IV$ in \cite{PeiPei1959}), then we observe that $R$ can be thought as a limit case of a region of type $7$, when the arcs $a_0p_0$ and $a_1p_1$ coincide. See Figure~\ref{Fig9}.
\begin{figure}[h]
	\begin{center}
		\begin{overpic}[width=10cm]{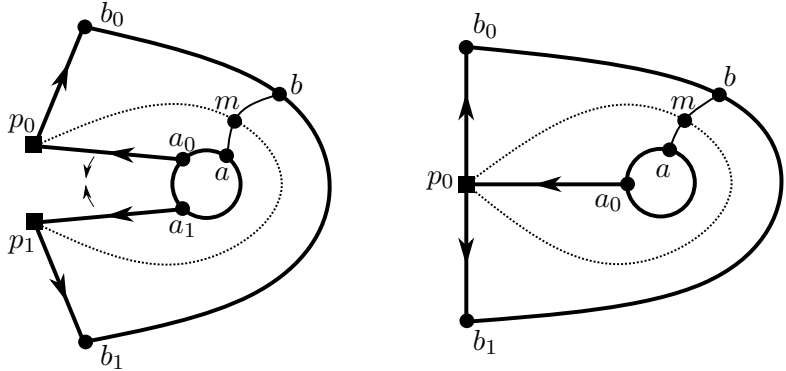} 
			\put(19,27){$a_0$}
			\put(19,15.5){$a_1$}
			\put(25,22.5){$a$}
			\put(-2,29.5){$p_0$}
			\put(-2,13.5){$p_1$}
			\put(10,43.5){$b_0$}
			\put(10,-2){$b_1$}
			\put(35,34){$b$}
			\put(25,32){$m$}
			
			\put(53,22){$p_0$}
			\put(75,19){$a_0$}
			\put(59,42){$b_0$}
			\put(59,0){$b_1$}
			\put(83,23){$a$}
			\put(92,35){$b$}
			\put(85,32){$m$}
		\end{overpic}
	\end{center}
	\caption{An illustration of the homeomorphism of a generic region of type $2$.}\label{Fig9}
\end{figure}
If $R$ is of type $6$ (known as type $III$ in \cite{PeiPei1959}), we need only to collapse the segment $a_1p_1$ at the point $a_1$. See Figure~\ref{Fig12}.
\begin{figure}[h]
	\begin{center}
		\begin{overpic}[width=5cm]{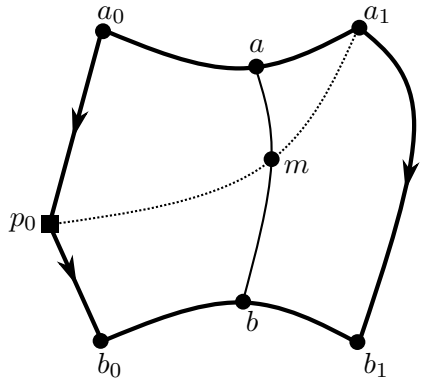} 
			\put(15,88){$a_0$}
			\put(85,88){$a_1$}
			\put(15,-6){$b_0$}
			\put(85,-6){$b_1$}
			\put(55,79){$a$}
			\put(64,47){$m$}
			\put(54,5){$b$}
			\put(-8,33){$p_0$}
		\end{overpic}
	\end{center}
	\caption{An illustration of the homeomorphism of a generic region of type $6$.}\label{Fig12}
\end{figure}
Let $R$ be a region of type $1$ (known as type $I$ in \cite{PeiPei1959}). In this case, $R$ is bounded by two closed curves $\alpha$ and $\omega$. We take an homeomorphism of $\alpha$ onto the circle, i.e. the interval $0\leqslant u\leqslant 1$, with identified end-points. Then, to each $m\in\overline{R}$, with $ab$ being the orbit through $m$, we associate the coordinates $(u,v)$, where $u$ is the abscissa of $a$ and
	\[v=\frac{\ell(am)}{\ell(ab)}.\]
This defines an homeomorphism from $\overline{R}$ to the cylinder $\mathbb{S}^1\times[0,1]$. To approach a region $R$ of type $9$ (known as type $VII$ in \cite{Soto1974}, p. $30$), we now present the technique developed by Sotomayor \cite{Soto1974}. Let $a_0a_1$ and $b_0b_1$ be the source $\alpha$ and the sink $\omega$ of $R$. Let also $p_0$, $p_1$ and $q$ be the hyperbolic saddles associated to $R$. See Figure~\ref{Fig10}.
\begin{figure}[h]
	\begin{center}
		\begin{overpic}[width=5cm]{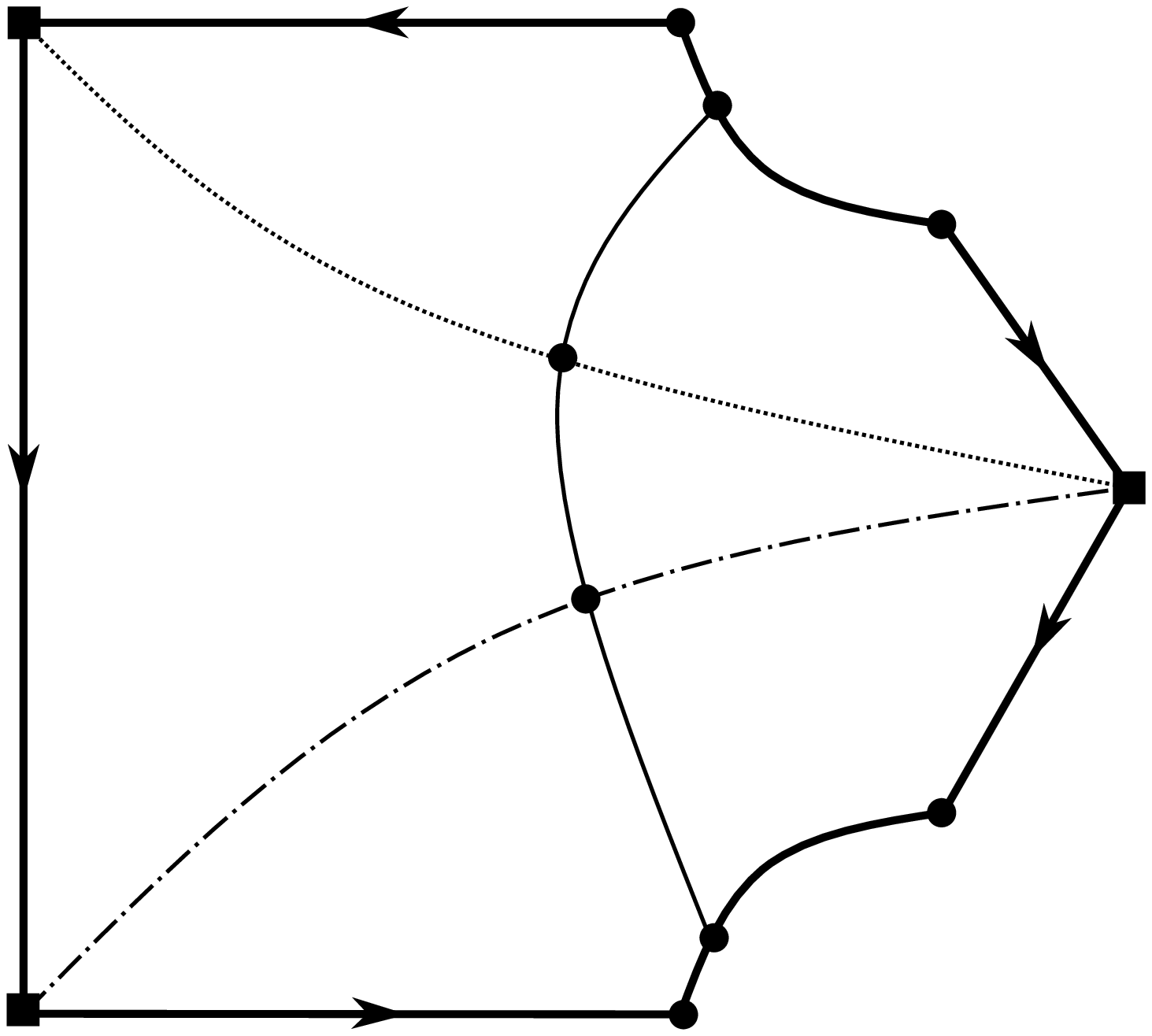} 
			\put(60,90){$a_0$}
			\put(65,80){$a$}
			\put(83,72){$a_1$}
			\put(60,-5){$b_0$}
			\put(65,5){$b$}
			\put(85,15){$b_1$}
			\put(102,46){$q$}
			\put(-5,92){$p_0$}
			\put(-5,-5){$p_1$}
			\put(52,60){$m_1$}
			\put(55,33){$m_2$}
		\end{overpic}
	\end{center}
	\caption{An illustration of the homeomorphism of a generic region of type $9$.}\label{Fig10}
\end{figure}
First let $a_0a_1$ be sent by arc length to the segment $0\leqslant u \leqslant 1$. Let also
	\[\alpha_1=\frac{\ell(a_0p_0)}{\ell(a_0p_0)+\ell(p_0p_1)+\ell(p_1b_0)}, \quad \beta_1=\frac{\ell(a_1q)}{\ell(a_1q)+\ell(qb_1)}, \quad \alpha_2=\frac{\ell(p_0p_1)}{\ell(p_0p_1)+\ell(p_1b_0)}, \quad \beta_2=1.\]
Given an orbit $ab$, with $a\mapsto u$, we divide it into three pieces $am_1$, $m_1m_2$, $m_2b$, such that
	\[\frac{\ell(am_1)}{\ell(ab)}=(1-u)\alpha_1+u\beta_1, \quad \frac{\ell(m_1m_2)}{\ell(m_1b)}=(1-u)\alpha_2+u\beta_2.\]
Similarly to the previous cases, we can now send the pieces $am_1$, $m_1m_2$, $m_2b$ by arc length to its respective points in $R_Y$ and thus create an homeomorphism $\varphi\colon\overline{R}\to\overline{R_Y}$. We now present how to create an homeomorphism on the other generic regions. Similarly to a region of type $7$ (see Figure~\ref{Fig8}), we can create an homeomorphism on the generic region of type $10$. See Figure~\ref{Fig11}.
\begin{figure}[h]
	\begin{center}
		\begin{overpic}[width=10cm]{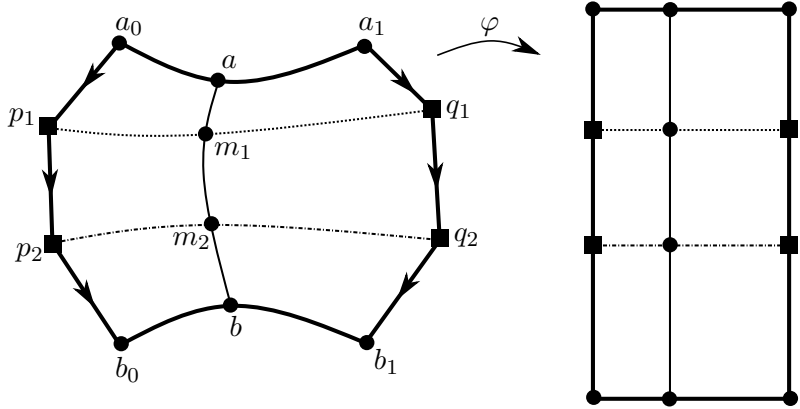} 
			\put(10,50){$a_0$}
			\put(24,45){$a$}
			\put(42,50){$a_1$}
			\put(-4,38){$p_1$}
			\put(-3,20){$p_2$}
			\put(53.5,39){$q_1$}
			\put(54.5,22){$q_2$}
			\put(10,4){$b_0$}
			\put(25,9.5){$b$}
			\put(44,5){$b_1$}
			\put(23,33.5){$m_1$}
			\put(17.5,21.5){$m_2$}
			\put(58,50){$\varphi$}
		\end{overpic}
	\end{center}
	\caption{An illustration of the homeomorphism of a generic region of type $10$.}\label{Fig11}
\end{figure}
Similarly to a region of type $9$ (see Figure~\ref{Fig10}), we can create an homeomorphism on the generic regions of type $8$ and $12$. See Figure~\ref{Fig13}.
\begin{figure}[h]
	\begin{center}
		\begin{minipage}{7cm}
			\begin{center}
				\begin{overpic}[height=4cm]{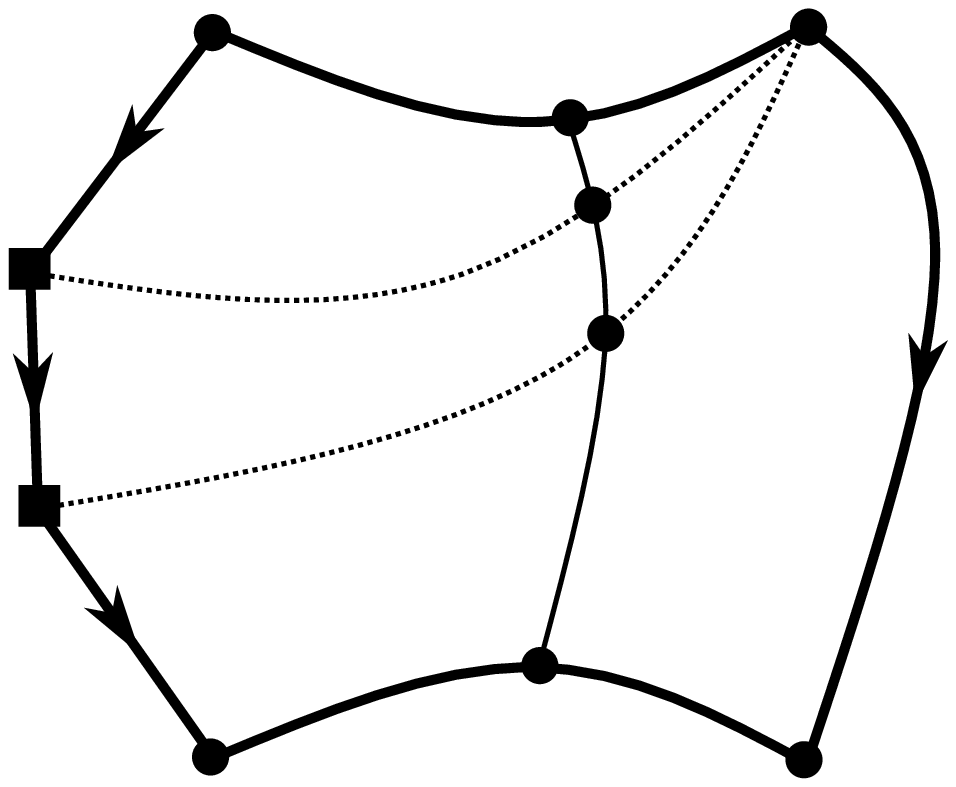} 
					\put(25,80){$a_0$}
					\put(58,73){$a$}
					\put(85,83){$a_1$}
					\put(-7,55){$p_1$}
					\put(-7,28){$p_2$}
					\put(50,62){$m_1$}
					\put(66,44){$m_2$}
					\put(20,-6){$b_0$}
					\put(55,4){$b$}
					\put(85,-6){$b_1$}
				\end{overpic}
				
				Type $8$
			\end{center}
		\end{minipage}
		\begin{minipage}{7cm}
			\begin{center}
				\begin{overpic}[height=4cm]{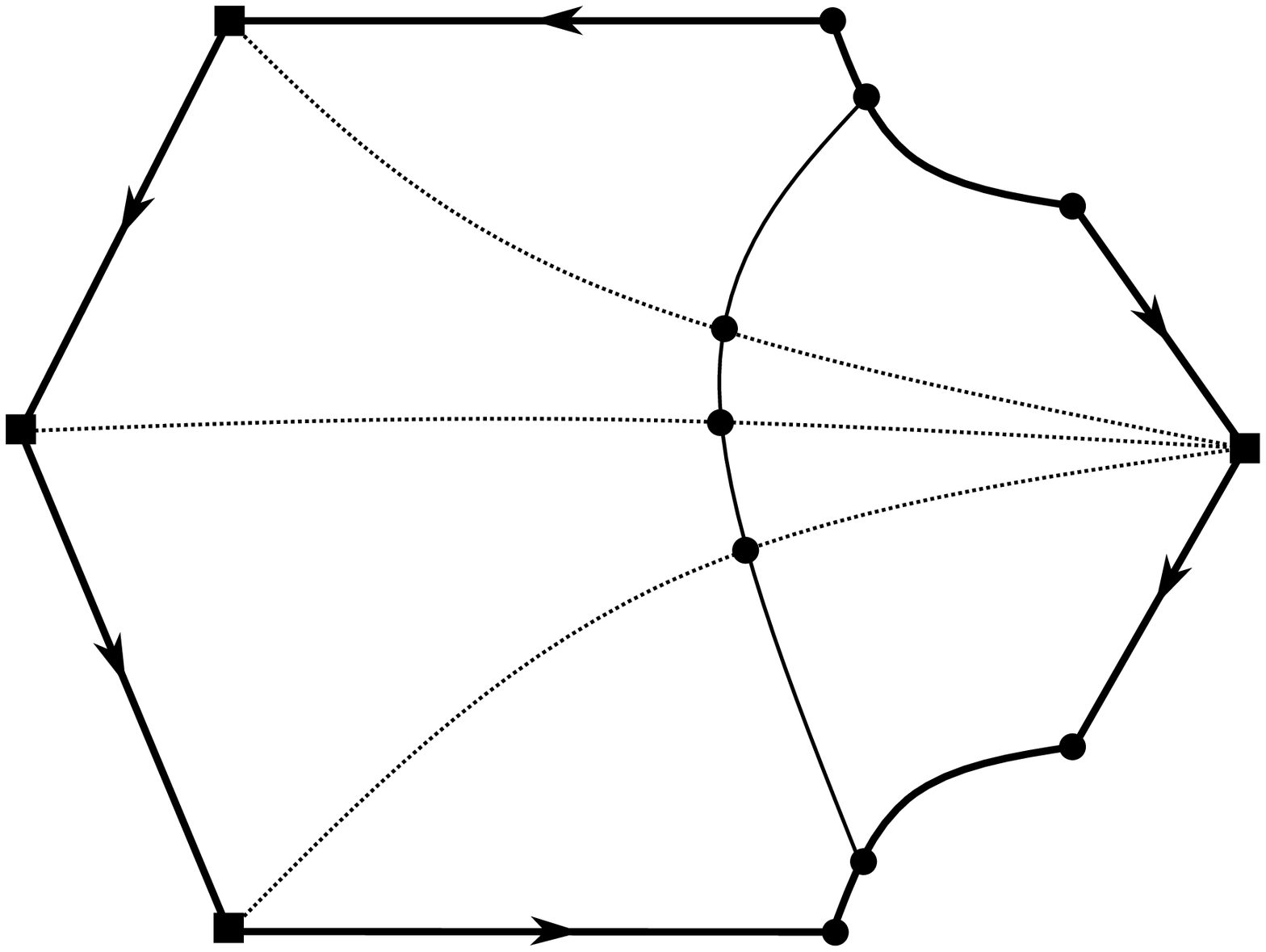} 
					\put(67,75){$a_0$}
					\put(71,67){$a$}
					\put(86,60){$a_1$}
					\put(10,75){$p_1$}
					\put(-8,40){$p_2$}
					\put(9,0){$p_3$}
					\put(101,38){$q_1$}
					\put(84,9){$b_0$}
					\put(70,4){$b$}
					\put(67,-5){$b_1$}
					\put(60,50){$m_1$}
					\put(47,37){$m_2$}
					\put(62,28){$m_3$}
				\end{overpic}
	
				Type $12$
			\end{center}
		\end{minipage}
	\end{center}
\caption{Illustrations of the homeomorphisms on generic regions of types $8$ and $12$.}\label{Fig13}
\end{figure}
To construct an homeomorphism on the other generic regions $R$ (including the ones which may not be in Figure~\ref{Fig6}), we work as follows. Let $a_0a_1$ and $b_0b_1$ be the source and sink of $R$. Let also $p_1,p_2,\dots,p_k$ and $q_1,q_2,\dots,q_l$ be the hyperbolic saddle of $R$. See Figure~\ref{Fig14}.
\begin{figure}[h]
	\begin{center}
		\begin{overpic}[width=10cm]{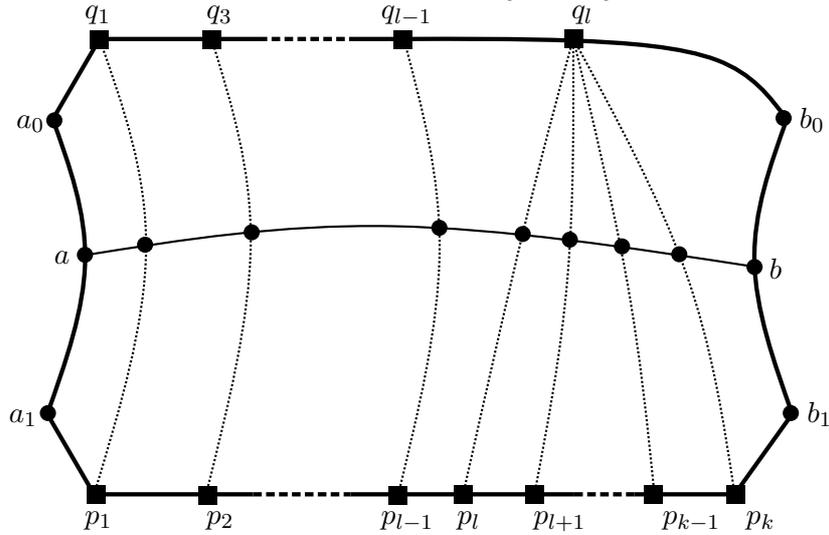} 
			\put(-3,50){$a_0$}
			\put(2,32){$a$}
			\put(-4,11){$a_1$}
			\put(100,50){$b_0$}
			\put(96,30){$b$}
			\put(101,11){$b_1$}
			
			\put(6,-2.5){$p_1$}
			\put(22,-2.5){$p_2$}
			\put(45,-2.5){$p_{l-1}$}
			\put(55,-2.5){$p_l$}
			\put(65,-2.5){$p_{l+1}$}
			\put(82,-2.5){$p_{k-1}$}
			\put(93,-2.5){$p_k$}
			
			\put(6,64.5){$q_1$}
			\put(22,64.5){$q_3$}
			\put(45,64.5){$q_{l-1}$}
			\put(70,64.5){$q_l$}
		\end{overpic}
	\end{center}
	\caption{An illustration of the homeomorphism of a generic generic region $R$.}\label{Fig14}
\end{figure}
Without loss of generality, suppose $k\geqslant l$. For each $i\in\{1,\dots,l\}$, we connect the saddles $p_i$ and $q_i$. Then, for each $j\in\{l,l+1,\dots,k\}$ we connect $p_j$ with $q_l$. If $l=0$, then we connect each $p_j$ with $a_0$, $j\in\{1,\dots,k\}$. Moreover, if necessary we can identify the arcs $a_0p_1$ and $a_0q_1$. Therefore, at this stage we have obtained an homeomorphism for each generic region, including those which may not be in Figure~\ref{Fig6}. Furthermore, since on the boundary of each generic region the homeomorphisms are constructed by \emph{arc length} (recall \eqref{35}, for example) it follows that they agree on the boundary of such regions. The homeomorphisms of the critical regions follows exactly as in \cite{PeiPei1959}. For the sake of self-containedness, we now present a briefly explanation of the construction of the homeomorphism in the case of a critical region of a hyperbolic limit cycle. Let $\gamma$ be a limit cycle of $X$ and $C$, $C'$ be the curves that bounds the critical region $S$ of $\gamma$, given by Lemma~\ref{Lemma10}. See Figure~\ref{Fig16}.
\begin{figure}[h]
	\begin{center}
		\begin{overpic}[width=10cm]{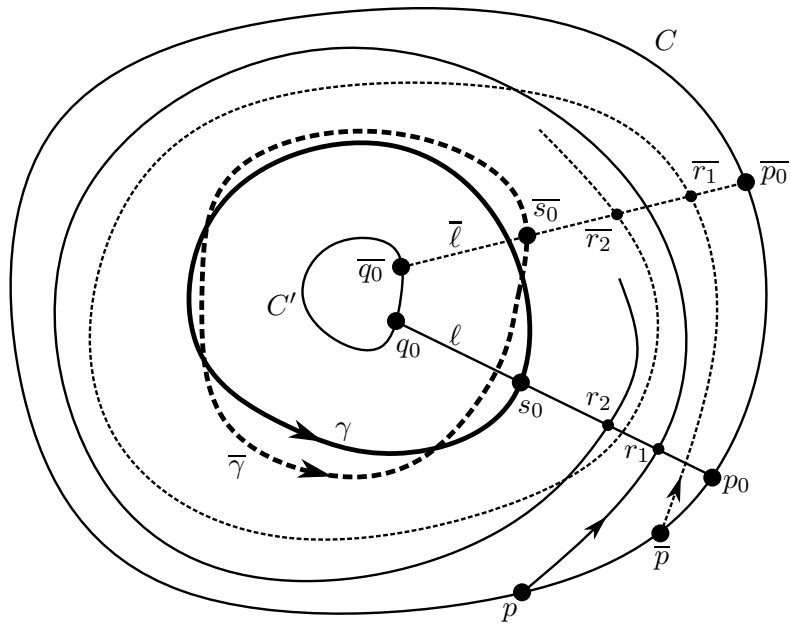} 
			\put(85,75){$C$}
			\put(34,40){$C'$}
			
			\put(94,17){$p_0$}
			\put(51,35){$q_0$}
			\put(81,21){$r_1$}
			\put(76,28){$r_2$}
			\put(65,0){$p$}
			\put(58,36){$\ell$}
			\put(67,27){$s_0$}
			\put(43,24){$\gamma$}
			
			\put(99,58){$\overline{p_0}$}
			\put(46,45){$\overline{q_0}$}
			\put(90,58){$\overline{r_1}$}
			\put(76,49){$\overline{r_2}$}
			\put(85,7){$\overline{p}$}
			\put(58,49){$\overline{\ell}$}
			\put(69,53){$\overline{s_0}$}
			\put(29,19){$\overline{\gamma}$}
		\end{overpic}
	\end{center}
	\caption{An illustration of the homeomorphism of the critical region of a limit cycle.}\label{Fig16}
\end{figure}
Without loss of generality, suppose that $\gamma$ is stable and has the counterclockwise orientation. Let $Y\in N\subset\mathcal{P}_d$ and denote by $\overline{\gamma}$ the associated limit cycle of $Y$. Let $\ell$ be a normal section of $\gamma$ and let $s_0$, $p_0$ and $q_0$ be given by the intersection of $\ell$ with $\gamma$, $C$ and $C'$, respectively. We recall that at this stage we already have an homeomorphism $\varphi$ defined on $C\cup C'$, given by the homeomorphisms on the generic regions, and thus we need to extend $\varphi$ to $S$. Hence, we can consider the points $\overline{p_0}=\varphi(p_0)$, $\overline{q_0}=\varphi(q_0)$ and thus the transversal section $\overline{\ell}$ passing through them. It follows from Lemma~\ref{Lemma10} that restricting $S$ if necessary, we can assume that the flow of $p(Y)$ is transversal to $\overline{\ell}$.	Let $S'$ (resp. $S''$) be the region bounded by $\gamma$ and $C$ (resp $C'$). We will focus on the extension of $\varphi$ to $S'$. The extension to $S''$ is similar. Given $p\in C$, let $\xi$ be the orbit of $X$ that enters $S'$ through $p$. Observe that $\xi$ intersects $\ell$ infinitely many times, defining a monotone sequence of points $(r_i)\subset\ell$ such that $r_i\to s_0$. Let $\overline{p}=\varphi(p)$ and let $\overline{\xi}$ be the orbit of $Y$ through $\overline{p}$. Observe that $\overline{\xi}$ intersects $\overline{\ell}$ infinitely many times, defining a monotone sequence $(\overline{r_i})\subset\overline{\ell}$ such that $\overline{r_i}\to\overline{s_0}$, for some $\overline{s_0}\in\overline{\gamma}$. Observe that the sequences $(r_i)$ and $(\overline{r_i})$, together with $p$ and $\overline{p}$, divides the orbits $\xi$ and $\overline{\xi}$ in sequences of arcs $\{\xi_i\}$ and $\{\overline{\xi_i}\}$. For each $i\in\mathbb{N}$ we send $\xi_i$ to $\overline{\xi_i}$ by arc length. This defines an extension of $\varphi$ to the open ring $S'$. Similarly, we can extend $\varphi$ to the open ring $S''$. Finally, we extend $\varphi$ to $\gamma$ by defining $\varphi(s_0)=\overline{s_0}$ and then by arc length on $\gamma$.

At this stage we have constructed the homeomorphisms on each generic and critical regions. Moreover, it follows from the construction that they agree wherever they overlaps and thus we have a global homeomorphism $\varphi\colon\mathbb{S}^2\to\mathbb{S}^2$, sending orbits of $p(X)$ to the orbits of $p(Y)$. We now briefly explain why $\varphi$ can be taken on a $\varepsilon$-neighborhood of the identity map. First, if $D$ is a critical region associated to a singularity (see Lemma~\ref{Lemma9}), then we can restrict $D$ such that for every $x$, $y\in D$ we have $||x-y||<\frac{1}{2}\varepsilon$. Hence, $\varphi|_D\colon D\to D$ is naturally in the $\varepsilon$-neighborhood of the identity. The case of the critical regions $S$ of limit cycles or polycycles follows similarly, one need only to restrict $S$ such that the normal section $\ell$ (recall Figures~\ref{Fig4} and \ref{Fig16}) has a small enough length. We now consider the case of a generic region $R$. For simplicity, assume that $R$ is of type $7$ (see Figure~\ref{Fig8}). For each $a\in a_0a_1$, it follows from the continuous dependence of initial conditions (see Theorem~$8$, p. 25 of \cite{And1971}) that there is a $\delta$-neighborhood of $X$, $\delta=\delta(a)$, such that if $\rho(X,Y)<\delta$ (recall \eqref{36}), then the arc of orbit $\varphi(a)\varphi(b)$, given by the orbit of $Y$ through $\varphi(a)$, is in a $\varepsilon$-neighborhood of the arc $ab$. For each $a\in a_0a_1$, let $\overline{\delta}(a)>0$ be the least upper bound with this property (such upper bound exist because for $\delta>0$ big enough, $Y$ will leave $\mathcal{P}_d$). Let $\overline{\delta}=\inf\bigl\{\overline{\delta}(a)\colon a\in a_0a_1\bigr\}\geqslant 0$. We claim that $\overline{\delta}>0$. Indeed, if we suppose by contradiction that $\overline{\delta}=0$, then it follows that there is a sequence of points $(\alpha_k)\subset a_0a_1$ such that $\delta_k=\overline{\delta}(\alpha_k)$ satisfies $\delta_k\to 0$. Since the arc $a_0a_1$ is compact, it follows that (passing to a subsequence if necessary) we can assume $\alpha_k\to\alpha$, for some $\alpha\in a_0a_1$. It follows again from the continuous dependence of initial conditions that there is an open arc $I\subset a_0a_1$ (in the relative topology of $a_0a_1$), with $\alpha\in I$, and $\delta_0>0$ such that if $\rho(X,Y)<\delta_0$, then for each $a\in I$ the arc $\varphi(a)\varphi(b)$ is in a $\varepsilon$-neighborhood of the arc $ab$. But this contradicts the fact that $\alpha_k\to\alpha$ and $\delta_k\to0$, proving the claim. Therefore, it follows from the definition of $\overline{\delta}$ that if $\rho(X,Y)<\overline{\delta}$, then $\varphi|_R$ is $\varepsilon$-close to the identity. Since there are a finite number of generic and critical regions, it follows that there is $\delta>0$ small enough such that if $\rho(X,Y)<\delta$, then $\varphi$ is $\varepsilon$-close to $\text{Id}_{\mathbb{S}^2}$.

\section*{Acknowledgments}

We thank to the reviewers their comments and suggestions which help us to improve the presentation of this paper. The authors are partially supported by CNPq, grant 304798/2019-3 and by S\~ao Paulo Research Foundation (FAPESP), grants 2019/10269-3 and 2021/01799-9.

\end{document}